\newtheorem{thm}[equation]{Theorem}
\newtheorem{lem}[equation]{Lemma}
\newtheorem{lem-dfn}[equation]{Lemma-Definition}
\newtheorem{prop}[equation]{Proposition}
\newtheorem{cor}[equation]{Corollary}
\newtheorem{ass}[equation]{Assumption}
\theoremstyle{definition}
\newtheorem{defn}[equation]{Definition}
\newtheorem{ex}[equation]{Example}
\newtheorem{quest}[equation]{Question}
\newtheorem{prob}[equation]{Problem}
\newtheorem*{acknowledgement}{Acknowledgement}
\theoremstyle{remark}
\newtheorem{clm}{Claim}
\newtheorem{rem}[equation]{Remark}
\numberwithin{equation}{section}
\newcommand{\thmref}[1]{Theorem~\ref{#1}}
\newcommand{\lemref}[1]{Lemma~\ref{#1}}
\newcommand{\proref}[1]{Proposition~\ref{#1}}
\newcommand{\clmref}[1]{Claim~\ref{#1}}
\newcommand{\defref}[1]{Definition~\ref{#1}}
\newcommand{\figref}[1]{Figure~\ref{#1}}
\newcommand{\sref}[1]{Section~\ref{#1}}
\DeclareMathOperator{\Ext}{Ext}
\DeclareMathOperator{\Hom}{Hom}
\DeclareMathOperator{\rank}{rank}
\DeclareMathOperator{\emb}{embdim}
\DeclareMathOperator{\ord}{ord}
\DeclareMathOperator{\coeff}{coeff}
\DeclareMathOperator{\pic}{Pic}
\DeclareMathOperator{\di}{div}
\DeclareMathOperator{\id}{Id}
\DeclareMathOperator{\mult}{mult}
\newcommand{\m}{\mathfrak m}
\newcommand{\Z}{\mathbb Z}
\newcommand{\Q}{\mathbb Q}
\newcommand{\C}{\mathbb C}
\newcommand{\cI}{\mathcal I}
\newcommand{\cO}{\mathcal O}
\newcommand{\cP}{\mathcal P}
\newcommand{\tX}{\widetilde X}
\renewcommand{\:}{\colon}
\newcommand{\defset}[2]{{\left\{#1\,\left| \,#2 \right. \right\}}}
\newcommand{\X}{(X,o)}
\newcommand{\zmi}{Z_{\rm{min}}}
\newcommand{\zma}{Z_{\rm{max}}}
\newcommand{\zco}{Z_{\rm{coh}}}
\title[Analytic structures supported  on a fixed topological type]{Analytic singularities supported
by a specific  integral homology sphere link}
\author{Andr\'as N\'emethi}
\thanks{The first author was partially supported by NKFIH Grant  112735 and
ERC Adv. Grant LDTBud of A. Stipsicz at R\'enyi Institute of Math., Budapest}
\address{Alfr\'ed R\'enyi Institute of Mathematics,
Hungarian Academy of Sciences,
Re\'altanoda utca 13-15, H-1053, Budapest, Hungary \newline
 \hspace*{4mm} ELTE - University of Budapest, Hungary \newline \hspace*{4mm}
BCAM - Basque Center for Applied Mathematics, Bilbao, Spain}
\email{nemethi.andras@renyi.mta.hu }
\author{Tomohiro Okuma}
\thanks{The second author was partially supported by JSPS  KAKENHI
Grant Number 26400064.}
\address{Department of Mathematical Sciences,
Faculty of Science, Yamagata University, Yamagata, 990-8560, Japan.}
\email{okuma@sci.kj.yamagata-u.ac.jp}
\subjclass[2010]{Primary  32S25;
Secondary 14B05, 14J17}
\keywords{surface singularity, integral homology sphere, geometric genus, multiplicity, analytic types, Kodaira singularities, splice singularities}
\begin{document}

\begin{abstract}
The main question we target is the following:
If one fixes a topological type (of a complex normal surface singularity)
then what are the possible analytic types supported by it, and/or,
what are the  possible values of the geometric genus?
We answer the question for a specific (in some sense pathological)
topological type, which supports
 rather different analytic structures.
These structures are listed together with some of their key analytic invariants.
\end{abstract}

\maketitle

\begin{center}
{\it Dedicated to Henry Laufer  on the occasion of his 70th birthday}
\end{center}

\setcounter{tocdepth}{1}

\section{Introduction}

The topological type of  a normal complex surface singularity $(X,o)$ is determined by its
link (an oriented smooth connected 3--manifold), or, by the dual graph of any good resolution
(a  connected graph with a negative definite intersection form
 \cite{GRa,Mu}, which serves also as
 plumbing graphs of the link \cite{neumann}).

The main question we target is the following:
\begin{quest}
If one fixes a topological type (say, a minimal good resolution graph)
and varies the possible analytic types supported on this fixed
topological type, then what are the possible values of the geometric genus $p_g$?
\end{quest}
Slightly more concrete version is formulated as follows:

\begin{prob}
Associate combinatorially an integer ${\rm MAX}(\Gamma)$ to any (resolution) graph $\Gamma$, such that
for any analytic type supported by $\Gamma$ one has $p_g\leq {\rm MAX}(\Gamma)$, and furthermore,
for certain analytic structure one has equality.

Moreover, define by symmetric properties    ${\rm MIN}(\Gamma)$ as well.
\end{prob}
 A possible topological lower bound for $p_g$ can be constructed as follows. Fix a  resolution $\tX\to X$
 and for
 any divisor $l$  supported by the exceptional divisor set $\chi(l):= -(l, l-Z_K)/2$, where $Z_K$ is the
 anti-canonical cycle (see below) and $( \, , \, )$ denotes the intersection form. Set also ${\rm min}\chi$ as $\min_{l}\chi(l)$. Then ${\rm min}\chi$ is
 a topological invariant computable from $\Gamma$; Wagreich considered the expression
 $p_a(X,o)=1-\min\chi$, and called it the `artihmetical genus' \cite{Wael}.
 Moreover, for any analytic structure,
 whenever $p_g>0$,  one also has (see e.g. \cite[p. 425]{Wael})
 \begin{equation}\label{eq:min}
 1-{\rm min}\chi \leq p_g.
 \end{equation}
 Indeed, one verifies that ${\rm min}\chi$ can be realized by an effective cycle $l_0>0$ (see e.g.
 \cite{Book}). Then from the cohomological long exact sequence associated with
 $0\to \cO_{\tX}(-l_0)\to\cO_{\tX}\to \cO_{l_0}\to 0$ one has
 $$p_g+\chi(l_0)=\dim H^0(\cO_{\tX})/H^0(\cO_{\tX}(-l_0))+h^1(\cO_{\tX}(-l_0))\geq 1$$
 (since $H^0(\cO_{\tX})/H^0(\cO_{\tX}(-l_0))$ contains the class of constants).
 (\ref{eq:min}) sometimes is sharp: e.g.
 for elliptic singularities (when ${\rm min}\chi=0$) Laufer proved that for the generic
 analytic structure one has indeed
  $p_g=1-{\rm min}\chi=1$ \cite{la.me}.

For different generalizations of (\ref{eq:min})
(inequalities, which involve  besides $\min\chi$ and $p_g$ some other analytic invariants as well) see e.g. \cite[(2.6)]{Tomari86} or \cite[Prop. 8]{KNF}.

 However, the authors do not know if the above  bound (\ref{eq:min}) is always optimal:
 \begin{quest} Does there exist for any $\Gamma$ an analytic structure
  with $p_g=1-{\rm min}\chi$?\end{quest}
 A possible upper bound for $p_g$ is constructed as follows \cite{nem.lattice}.

  Let $\{E_i\}_{i\in\cI}$  denote the set of
 irreducible exceptional curves, and for simplicity {\it
 we will assume that each $E_i$ is rational}.
 For any effective cycle $Z>0$  let $\cP(Z)$ be the set of
  monotone computation sequences $\gamma=\{l_k\}_{k=0}^t$
  of cycles supported on the exceptional curve with the following properties:
 $l_0=0$, $l_t=Z$,  and $l_{k+1}=l_k+ E_{i(k)}$ for some $i(k)\in \cI$.
 Associated with such $\gamma$ we define
 $$S(\gamma):=\sum_{k=0}^{t-1} \max\{0, (E_{i(k)},l_k)-1\}.$$
 Set also ${\rm Path}(Z):=\min _{\gamma\in\cP(Z)} S(\gamma)$.
 Then for any analytic structure supported on $\Gamma$ one has
 \begin{equation}\label{eq:maxZ}
h^1(\cO_Z)\leq {\rm Path}(Z).
 \end{equation}
Indeed, from the
exact sequence $0\to \cO_{E_{i(k)}}(-l_k)\to \cO_{l_{k+1}}\to \cO_{l_k}\to 0$  we get
\begin{equation*}
h^1(\cO_{l_{k+1}})-h^1(\cO_{l_k})\leq h^1(\cO_{E_{i(k)}}(-l_k)) = \max\{0, (E_{i(k)},l_k)-1 \}
\ \ \ \ \ (0\leq k<t),
\end{equation*}
hence the inequality follows by summation.
Since $p_g=h^1(\cO_{\lfloor Z_K \rfloor})=h^1(\cO_Z)$ for any $Z\geq \lfloor
Z_K\rfloor $ when $Z_K\ge 0$, it is natural to define
${\rm Path}(\Gamma):= \min_{Z\geq \lfloor Z_K\rfloor } {\rm Path}(Z)$.
It satisfies
  \begin{equation}\label{eq:max}
p_g \leq {\rm Path}(\Gamma
).
 \end{equation}

The computation of ${\rm Path}(\Gamma)$ is rather hard. In \cite{nem.lattice} (see also \cite{N-Sig})
is related with the Euler characteristic of the `path lattice cohomology' of $\Gamma$.
In the next statement we collect some families of singularities when (\ref{eq:max}) is sharp.
\begin{thm} In the next statement we consider singularities with 
 rational homology sphere link. 
In the following cases $p_g={\rm Path}(\Gamma)$ (hence these analytic families
realize the maximal $p_g$ on their topological type):

-- \ weighted homogeneous normal surface singularities \cite{Book}
(in fact, for star shaped graphs with all $E_i$ rational, ${\rm Path}(\Gamma)$
equals the topological expression of Pinkham valid for $p_g$ \cite{pinkham}),

-- \ superisolated hypersurface singularities \cite{N-Sig},

-- \ isolated hypersurface  Newton--nondegenerate singularities  \cite{N-Sig},

-- \ rational singularities \cite{Book},

-- \ Gorenstein elliptic singularities \cite{Book}.
\end{thm}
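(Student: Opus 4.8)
Since (\ref{eq:max}) already provides $p_g\leq{\rm Path}(\Gamma)$, the content in each of the five families is the reverse bound ${\rm Path}(\Gamma)\leq p_g$; and since $p_g$ is known in closed form in every case, it suffices to produce a single monotone computation sequence $\gamma$ (reaching some $Z\geq\lfloor Z_K\rfloor$) with $S(\gamma)$ equal to that value, or to carry out an equivalent lattice--cohomological computation. Two cases are handled by writing $\gamma$ down explicitly. For a \emph{rational} graph, take $\gamma$ to be Laufer's computation sequence for the fundamental cycle $\zmi$: rationality gives $\chi(l)\geq 1$ for every effective $l>0$ (Artin), and since $\chi(E_{i_0})=1$ and $\chi(l_{k+1})=\chi(l_k)+1-(E_{i(k)},l_k)$ with $(E_{i(k)},l_k)\geq 1$ at each Laufer step, one gets inductively $\chi(l_k)=1$ and hence $(E_{i(k)},l_k)=1$ throughout, so $S(\gamma)=0$; since $\zmi\geq\lfloor Z_K\rfloor$ for rational singularities and $p_g=0$, this gives ${\rm Path}(\Gamma)=0=p_g$. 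For a \emph{Gorenstein elliptic} graph, concatenate the Laufer sequences attached to the successive cycles of the Yau--Laufer elliptic sequence (ending at $\lfloor Z_K\rfloor$, which uses Gorensteinness): each minimally elliptic ``layer'' contributes exactly one step with $(E_{i(k)},l_k)=2$, i.e. $+1$ to $S(\gamma)$, and every other step contributes $0$, so $S(\gamma)$ equals the number of layers, i.e. the length of the elliptic sequence, which --- the singularity being Gorenstein --- is $p_g$.

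For the remaining three families the plan is to identify ${\rm Path}(\Gamma)$ with the already available \emph{topological} formula for $p_g$. If $\Gamma$ is star--shaped, take $\gamma$ to be the computation sequence that first fills the central vertex and then sweeps each leg outward; summing $\max\{0,(E_{i(k)},l_k)-1\}$ along it produces, after a bookkeeping in the Seifert pairs $(\alpha_j,\omega_j)$, exactly Pinkham's topological expression for $p_g$ --- which, being a topological invariant, also equals $p_g$ of every weighted homogeneous analytic structure on $\Gamma$. For \emph{superisolated} and \emph{Newton--nondegenerate} hypersurface singularities I would not write a sequence out by hand, but invoke the interpretation of ${\rm Path}(\Gamma)$ through the Euler characteristic of the path lattice cohomology \cite{nem.lattice,N-Sig} and compute that on the relevant resolution graph: the minimal embedded resolution of the projectivized tangent cone for superisolated singularities, the Newton--diagram (toric) resolution in the nondegenerate case. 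The outcome is precisely the classical formula for $p_g$ of those families --- a lattice--point count of Kouchnirenko--Merle--Teissier type in the Newton case, and the combinatorial invariant of the projective plane curve in the superisolated case.

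The step I expect to be the real obstacle lives in this last group, and it is exactly the content of \cite{N-Sig}: proving that the combinatorial minimum ${\rm Path}(\Gamma)$ --- equivalently, the path lattice cohomology --- evaluated on these highly non--generic resolution graphs really collapses onto the known $p_g$--formula, rather than merely being bounded by it. The technical heart is an induction over $\Gamma$ that simultaneously controls $h^1$ and an optimal path, and it is here that the special geometry (the structure of the embedded resolution, respectively of the Newton fan) is used in an essential way. In the weighted homogeneous case the analogous difficulty is milder --- a closed identity between the telescoped sum $S(\gamma)$ and Pinkham's formula, where negative definiteness and the arithmetic of the $(\alpha_j,\omega_j)$ enter --- and in the elliptic case it reduces to the Yau--Laufer fact that in the Gorenstein setting the elliptic sequence terminates exactly at a minimally elliptic cycle, which is what makes the concatenated path optimal.
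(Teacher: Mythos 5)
This theorem is stated in the introduction as a compilation of known results and the paper offers no proof of it beyond the citations to \cite{Book}, \cite{pinkham} and \cite{N-Sig}, so there is no internal proof to compare yours against; your sketch is consistent with how those references establish the respective cases (Laufer sequences for rational and elliptic graphs, the identification of ${\rm Path}(\Gamma)$ with Pinkham's formula for star-shaped graphs, and the path lattice cohomology computations of \cite{nem.lattice,N-Sig} for the superisolated and Newton--nondegenerate families). One small imprecision worth fixing in the rational case: ${\rm Path}(\Gamma)$ is a minimum of ${\rm Path}(Z)$ over $Z\geq\lfloor Z_K\rfloor$, and your path stops at $\zmi$, which need not dominate $\lfloor Z_K\rfloor$ in general; you must continue the sequence beyond $\zmi$ to such a $Z$, which the generalized Laufer sequences for rational graphs allow while keeping $(E_{i(k)},l_k)\leq 1$ at every step, so $S(\gamma)=0$ and the conclusion ${\rm Path}(\Gamma)=0=p_g$ survives. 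Similarly, in the Gorenstein elliptic case the concatenated path must terminate at a cycle $\geq Z_K$; this is guaranteed by the relation between $Z_K$ and the sum of the cycles in the elliptic sequence in the (numerically) Gorenstein setting, which is the one point where Gorensteinness enters besides the formula $p_g=m+1$.
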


One can expect that the realization $p_g={\rm Path}(\Gamma)$ is even more general.


However,
 the main aim of the present article is to show that the upper bound (\ref{eq:max}) in general is not sharp:
 for certain graph $\Gamma$ the bound ${\rm Path}(\Gamma)$ cannot be realized.
 Surprisingly, the very same example shows  some additional statements as well:
 (the third part is motivated by the `conviction'
  that usually  `large'  $p_g$  is realized simultaneously with `small' maximal ideal cycle):

 \begin{thm}\label{t:pg<path} There exists a numerically Gorenstein topological type for which

 -- \ $p_g<{\rm Path}(\Gamma)$ for any analytic type supported on $\Gamma$;

 -- \ even if an analytic type realizes the maximal $p_g$ (among all analytic types supported on the topological type under discussion)
 it is not necessarily Gorenstein;

 -- \ even if an analytic type realizes the maximal $p_g$,  the maximal ideal cycle is not necessarily the
 Artin cycle.
 \end{thm}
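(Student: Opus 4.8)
The plan is to exhibit one \emph{explicit} resolution graph $\Gamma$ that simultaneously witnesses all three phenomena, together with enough of the analytic structures supported on it. I would first take $\Gamma$ to be a tree with all $E_i$ rational whose (negative definite) intersection form is unimodular; then the link is an integral homology sphere, and $\Gamma$ is automatically numerically Gorenstein, since the vanishing of $H_1$ of such a link forces $Z_K$ to be an integral cycle. On the combinatorial side I would compute directly from $\Gamma$: the anti-canonical cycle $Z_K$ and the integer $\fl{Z_K}$, the fundamental (Artin) cycle $\zmi$, the invariant ${\rm min}\chi$, and above all the path invariant ${\rm Path}(\Gamma)=\min_{Z\geq\fl{Z_K}}{\rm Path}(Z)$. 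The graph must be engineered so that a ${\rm Path}$-optimal monotone computation sequence reaching $\fl{Z_K}$ is forced through a single step whose contribution $\max\{0,(E_{i(k)},l_k)-1\}$ turns out not to be realizable in cohomology by \emph{any} analytic structure.

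Second — the crux — I would prove $p_g<{\rm Path}(\Gamma)$ for every analytic structure supported on $\Gamma$. Since the filtration (\ref{eq:maxZ}) is available along every monotone computation sequence, it suffices to locate, in a ${\rm Path}$-optimal sequence $\{l_k\}_{k=0}^{t}$ with $l_t\geq\fl{Z_K}$, one index $k$ at which the bound is universally strict, i.e.
\[
h^1(\cO_{l_{k+1}})-h^1(\cO_{l_k})\ \leq\ \max\{0,(E_{i(k)},l_k)-1\}-1
\]
for \emph{every} analytic structure; then $p_g=h^1(\cO_{l_t})\leq{\rm Path}(\Gamma)-1=:M$. The defect at that step is the rank of the connecting homomorphism $\delta\colon H^0(\cO_{l_k})\to H^1(\cO_{E_{i(k)}}(-l_k))$, and the point is to show $\delta\neq 0$ uniformly: its vanishing would mean that every section of $\cO_{l_k}$ extends to $\cO_{l_{k+1}}$, forcing the linear system attached to $l_k$ along $E_{i(k)}$ to be larger than a numerical constraint — imposed by negative-definiteness and the specific self-intersections of $\Gamma$ around that node — permits, a contradiction independent of the analytic structure. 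Establishing this uniformity (and that the defect is at least $1$) is the main obstacle, and it is exactly what dictates the delicate choice of $\Gamma$.

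Third, I would show that $M$ is attained, i.e.\ exhibit an analytic structure on $\Gamma$ with $p_g=M$, so that $M$ is indeed the maximum of $p_g$. Given the shape of $\Gamma$ the natural candidates are a Kodaira-type structure or a suitably chosen ``generic'' structure; for such a structure one runs a monotone computation sequence to some $Z\geq\fl{Z_K}$ and checks that every inequality in (\ref{eq:maxZ}) is an equality except at the single unavoidable defect step identified above, which yields $p_g=M$. (Alternatively, one invokes the existence of a generic analytic structure on $\Gamma$, which is known to maximize $p_g$, and computes its geometric genus.)

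Finally, I would analyze the maximizer constructed above for the last two bullets. To see it need not be Gorenstein I would show that no Gorenstein structure on $\Gamma$ can reach $p_g=M$: for a Gorenstein singularity the duality $h^1(\cO_{\tX}(-l))=h^0(\omega_{\tX}(l))$ (the symmetry $l\leftrightarrow Z_K-l$) forces extra vanishing on the sequence $l\mapsto h^1(\cO_{\tX}(-l))$ which is incompatible with the chain of equalities used above to reach $M$; alternatively one computes $p_g$ directly for the explicit Gorenstein structures available on $\Gamma$ (e.g.\ a splice-type complete intersection, when the semigroup condition holds) and finds it strictly below $M$. To see that the maximal ideal cycle need not be the Artin cycle, I would compute $\zma$ for the maximizer as the divisorial part of $\m\cO_{\tX}$ directly from its explicit sections and show $\zma>\zmi$: every global function vanishing along $\zmi$ in fact vanishes along a strictly larger cycle, which is read off from the same computation-sequence bookkeeping that produced $p_g=M$. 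Thus here a large $p_g$ occurs together with a large maximal ideal cycle, against the intuition quoted before the theorem.
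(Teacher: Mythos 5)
Your overall architecture (one explicit graph witnessing all three bullets, plus the bound (\ref{eq:maxZ}) along computation sequences) matches the paper's, but the mechanism you propose for the first and hardest bullet would fail. You want a single index $k$ in a ${\rm Path}$-optimal sequence at which the jump $h^1(\cO_{l_{k+1}})-h^1(\cO_{l_k})$ falls strictly below $\max\{0,(E_{i(k)},l_k)-1\}$ for \emph{every} analytic structure. For the graph the paper uses no such step exists: along the optimal path $0\to\zmi\to2\zmi\to Z_K$ the two jumps inside $0\to\zmi$ are always realized (every structure has $h^1(\zmi)=2$ since $\chi(\zmi)=-1$), and of the two remaining unit jumps the Kodaira structure realizes the one at $\zmi\to 2\zmi$ but not the one at $2\zmi\to Z_K$ ($h^1(\cO_E(-E))=1$, $h^1(\cO_E(-2E))=0$), while the splice structure does exactly the opposite. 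The defect is a global phenomenon, not a local one, and your heuristic that nonvanishing of the connecting map follows from ``a numerical constraint imposed by negative-definiteness'' is not a proof; you also exhibit no concrete graph for which it could be made one. The paper's route is genuinely different: it first shows (via \thmref{t:bd} and \corref{cor:Gorenstein}) that $p_g=4$ would force the structure to be Gorenstein with $\zco=Z_K$ and $h^1(m\zmi)=m+1$; this pins down $\zma=E$ and $\mult\X=2$, so $\X$ would be a suspension hypersurface $\{z^2+h(x,y)=0\}$; and that is excluded topologically because the splice diagram of $\Gamma$ carries weights $2$ on the vertical edges, violating the Neumann--Wahl coprimality condition $(a_ip_i,n)=1$ with $n=2$. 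This topological input has no counterpart in your argument, and without it (or something equally strong) the first bullet does not follow.

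The plan for the last two bullets is also misdirected. You propose to prove that no Gorenstein structure attains the maximum $M$; for the paper's graph this is false --- the Gorenstein splice structure attains $p_g=3=M$ --- and it is not what the statement needs: the second bullet only asks for the existence of a non-Gorenstein maximizer, which the paper supplies with the Kodaira structure ($\zma=\zmi$, $p_g=3$). Moreover you analyze ``the maximizer'' as a single object for both remaining bullets, but two different maximizers are required here: the non-Gorenstein witness has $\zma=\zmi$, while the witness with $\zma\ne\zmi$ (the splice structure, $\zma=2\zmi$) is Gorenstein; any structure on this graph that is simultaneously non-Gorenstein and has $\zma\ne\zmi$ has $p_g=2<M$. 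So the final step of your proposal, as written, cannot be carried out with one analytic structure.
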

%
%

Our fixed topological type, which has the above properties, is given by the minimal good graph
from Figure 1.

\begin{figure}
\begin{picture}(300,45)(30,0)
\put(125,25){\circle*{4}}
\put(150,25){\circle*{4}}
\put(175,25){\circle*{4}}
\put(200,25){\circle*{4}}
\put(225,25){\circle*{4}}
\put(150,5){\circle*{4}}
\put(200,5){\circle*{4}}
\put(125,25){\line(1,0){100}}
\put(150,25){\line(0,-1){20}}
\put(200,25){\line(0,-1){20}}
\put(125,35){\makebox(0,0){$-3$}}
\put(150,35){\makebox(0,0){$-1$}}
\put(175,35){\makebox(0,0){$-13$}}
\put(200,35){\makebox(0,0){$-1$}}
\put(225,35){\makebox(0,0){$-3$}}
\put(160,5){\makebox(0,0){$-2$}}
\put(210,5){\makebox(0,0){$-2$}}
\end{picture}
\caption{The graph $\Gamma$}
\label{fig:gamma}
\end{figure}
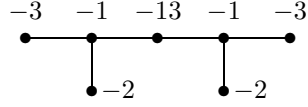

In the next statements we assume
 that $\X$ has the resolution graph $\Gamma$ from Figure 1 and $\tX$ is its minimal good resolution.
 Let $\zmi$ be the Artin cycle, while $\zma$ the maximal ideal cycle
 introduced by  S. S.-T. Yau \cite{Yau1} (see \defref{d:cycles}).
 For this graph one has ${\rm min}\chi=-1$ and ${\rm Path}(\Gamma)=4$. The first equality follows from
\cite[Example 4.4.1]{nem.lattice},  or by using (\ref{eq:min}),
$\chi(\zmi)=-1$ and the existence of an analytic structure with $p_g=2$.
The second equality follows  again from \cite{nem.lattice} (see also the description of the $\chi$--function for graphs with two
nodes in \cite{laszlo}). Nevertheless,
we will verify it  below as well.

 With these notations we prove the following.

\begin{thm}[Cf. \sref{s:pre}, \sref{s:<4}] For any analytic structure one has
$\coeff_{E_0}(\zma)\le 2$ (where $E_0$ is the $(-13)$-curve), and $p_g\X\le 3$.
If $\X$ is Gorenstein, then $p_g\X=3$.
\end{thm}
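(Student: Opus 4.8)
\medskip

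The plan is to reduce all three assertions to the identity $\pg\X=h^1(\cO_{Z_K})$, to its companion $h^0(\cO_{Z_K})=\pg\X$ (which holds since $Z_K$ is integral, effective and $>0$, so $\lfloor Z_K\rfloor=Z_K\ge 0$, and $\chi(Z_K)=0$), and to the maximality $\m=H^0(\cO_{\tX}(-\zma))$ with $\zma$ anti-nef; everything else is combinatorics of $\Gamma$ plus one analytic input on the $(-13)$-curve. First I would pin the numerics. Solving the defining equations for $Z_K$ gives $Z_K>0$ with $\coeff_{E_0}(Z_K)=3$. Laufer's algorithm gives $\zmi=2E_1+6E_2+3E_3+E_0+6E_4+3E_6+2E_5$ (with $E_0$ the $(-13)$-curve and $E_3,E_6$ the two $(-2)$-curves), whence $\coeff_{E_0}(\zmi)=1$, $\zmi^{\,2}=-1$, $(\zmi,Z_K)=-3$, and so $\chi(k\,\zmi)=\tfrac12\,k(k-3)$. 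One checks $\min\chi=-1$ — so $\Gamma$ is non-rational and therefore $\pg\X\ge 1-\min\chi=2$ for \emph{every} analytic structure — and ${\rm Path}(\Gamma)=4$, so $(\ref{eq:max})$ gives $\pg\X\le 4$. Finally each arm $\{E_1,E_2,E_3\}$, $\{E_4,E_5,E_6\}$ has intersection determinant $-1$, hence is rational, and $\chi$ is nonnegative on cycles supported on an arm.

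\emph{The bound $\coeff_{E_0}(\zma)\le 2$.} Since $\zma$ is the exceptional part of $\di_{\tX}(f)$ for generic $f\in\m$, it is anti-nef, and reading the inequalities $(\zma,E_i)\le 0$ along an arm forces $\coeff_{E_1}(\zma)\ge 2c$, $\coeff_{E_3}(\zma)\ge 3c$, $\coeff_{E_2}(\zma)\ge 6c$ (and symmetrically), where $c:=\coeff_{E_0}(\zma)$; hence $\zma\ge c\,\zmi$. To exclude $c\ge 3$ I would note that then $\zma\ge 3\,\zmi\ge Z_K$, so $\m=H^0(\cO_{\tX}(-3\,\zmi))$ is forced to be extremely small, and then argue — by classifying the analytic structures supported on $\Gamma$, or equivalently by a construction/obstruction analysis on the formal neighbourhood of $E_0$ together with the two arms — that no analytic structure on $\Gamma$ has such a maximal ideal. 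Thus $c\le 2$, and $\zma=\zmi$ precisely when $c=1$.

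\emph{The inequality $\pg\X\le 3$ — the crux.} Suppose $\pg\X=4$. As $\pg\X={\rm Path}(\Gamma)$, the bound $(\ref{eq:maxZ})$ is attained with equality: there exist $Z\ge Z_K$ and a monotone computation sequence $\gamma=\{l_k\}$ to $Z$ with $S(\gamma)=4$ along which no "drop" occurs, i.e. $H^0(\cO_{l_{k+1}})\to H^0(\cO_{l_k})$ is onto at every step. Using the description of the optimal computation sequences for a graph with two nodes, the cost $4$ is accumulated at a few steps, each adding the $(-13)$-curve $E_0$ or one of the $(-1)$-curves $E_2,E_4$; at such a step "no drop" is equivalent to the vanishing of an explicit obstruction in $H^1$ of the very negative line bundle $\cO_{\tX}(-l_k)|_{E_{i(k)}}$ on the relevant $\bP^1$, i.e. to a compatibility on the gluing data of the analytic structure. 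I would then show these compatibilities are jointly unsatisfiable for \emph{every} analytic structure, by splitting $\cO_{Z_K}$ along $E_0$: from the Mayer--Vietoris sequence $0\to\cO_{Z_K}\to\cO_{D_1}\oplus\cO_{D_2}\to\cO_{3E_0}\to 0$ (with $D_1,D_2$ the two halves of $Z_K$ meeting along $3E_0$) and the fact that each arm contracts to a smooth point — so the $H^0$ on the arm side is as large as possible and rigidly determines the admissible restrictions to $3E_0$ — one should extract $h^0(\cO_{Z_K})\le 3$, i.e. $\pg\X\le 3$, contradicting $\pg\X=4$. Making this count uniform over all analytic structures, and reconciling it with the optimal computation sequences, is the step I expect to be the main obstacle.

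\emph{Gorenstein $\Rightarrow\pg\X=3$.} By the previous step $2\le\pg\X\le 3$, so it remains to rule out $\pg\X=2$ in the Gorenstein case. There $\omega_{\tX}=\cO_{\tX}(K_{\tX})$, a generator of $\omega_{X,o}$ pulls back to a section of $\omega_{\tX}$ whose divisor is $-Z_K$ (so $\omega_{\tX}\cong\cO_{\tX}(-Z_K)$ on a neighbourhood of the exceptional set), and Grauert--Riemenschneider gives $h^1(\cO_{\tX}(-Z_K))=h^1(\omega_{\tX})=0$; hence $0\to\cO_{\tX}(-Z_K)\to\cO_{\tX}\to\cO_{Z_K}\to 0$ yields $\pg\X=\dim_\C\cO_{X,o}/H^0(\cO_{\tX}(-Z_K))$. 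Since $Z_K$ strictly dominates $\zma$ (Step 2), a restriction/filtration argument along $E_0$ (or a suitable $E_i$) — the image of $\m$ in $H^0\big(\cO_{\tX}(-\zma)|_{E_0}\big)$ is at least two-dimensional, since the zero locus on $E_0$ of the leading term of a generic $f\in\m$ moves — shows this colength is at least $3$. With $\pg\X\le 3$ this gives $\pg\X=3$.
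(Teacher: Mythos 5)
There is a genuine gap, and it sits exactly where you flag it yourself: the inequality $p_g\X\le 3$ is never actually proved. Your plan is to suppose $p_g=4$, observe that the path bound is then attained with equality, and extract $h^0(\cO_{Z_K})\le 3$ from a Mayer--Vietoris splitting of $\cO_{Z_K}$ along $3E_0$; but the assertion that the arms ``rigidly determine the admissible restrictions to $3E_0$'' is precisely the statement to be proved, and no mechanism is offered that works uniformly over all analytic structures. The paper's route is entirely different and genuinely uses analytic input: first, the path analysis (Theorem~\ref{t:bd} applied to $\Gamma$, i.e.\ Corollary~\ref{cor:Gorenstein}) shows that $p_g=4$ forces $\X$ to be Gorenstein with $h^1(mE)=m+1$ for $m=1,2,3$ on the minimal resolution; from this one deduces $h^1(\cO_{\tX}(-E))=2$, $h^1(\cO_{\tX}(-2E))=1$, surjectivity of $H^0(\cO_{\tX}(-E))\to H^0(\cO_E(-E))$, hence $\zma=E$ and, after a local coordinate computation at the single base point, $\m_{X,o}\cO_{\tX}=\m_P\cO_{\tX}(-E)$ and $\mult\X=2$. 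A multiplicity-two normal surface singularity is a suspension $\{z^2+h(x,y)=0\}$, and the Neumann--Wahl constraints on splice diagrams of such suspensions with integral homology sphere link are incompatible with the splice diagram of $\Gamma$ (whose nodes carry the decorations $3,7,2$ and $7,3,2$). That final topological obstruction is the heart of the proof and is absent from your proposal.

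Two smaller points. For $\coeff_{E_0}(\zma)\le 2$ your reduction $c\ge 3\Rightarrow\zma\ge 3\zmi\ge Z_K$ is correct, but you then defer to an unspecified ``classification of analytic structures''; the contradiction is in fact immediate ($\zma\ge Z_K$ together with $H^1(\cO_{\tX}(-Z_K))=H^1(\omega_{\tX})=0$ forces $p_g\le 1$, against $p_g\ge 2$), while the paper argues more directly on the minimal resolution: $H^1(\cO_{\tX}(-3E))=0$ and $\chi(\cO_E(-2E))=1$ produce a function with $(f)_E=2E$, whence $\zma\le 2E$. For the Gorenstein case, your claim that ``the zero locus on $E_0$ of the leading term of a generic $f\in\m$ moves'' is unjustified and is not a formal consequence of anything you have established: for the Kodaira structure one has $h^0(\cO_E(-E))=h^0(\cO_E(-2E))=1$, so the restricted system does \emph{not} move there, and nothing in your argument shows that the Gorenstein hypothesis forces it to move. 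The paper avoids this entirely by quoting Konno's theorem: a Gorenstein singularity with $p_g\ge 2$ satisfies $p_g>1-\chi(\zmi)=2$, which combined with $p_g\le 3$ gives $p_g=3$.
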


\begin{thm}
Any analytic structure satisfies one of the following properties:
\begin{enumerate}
\item $\zma=\zmi$, $p_g\X=3$, and $\X$ is a non-Gorenstein
 Kodaira singularity (cf. \thmref{t:ma=mi}).
\item $\zma=2\zmi$ and $\X$ is of splice type (hence Gorenstein
with $p_g\X=3$, cf. \thmref{t:splice}).
\item  $2\zmi\leq \zma < 3\zmi$ (there are three cases, see below),
$p_g\X=2$ and $\X$ is not Gorenstein (cf. \thmref{t:gen} and Section
    \ref{s:nez2z}).
\end{enumerate}
\end{thm}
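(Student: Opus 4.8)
The plan is to organize the proof around the single discrete invariant $m:=\coeff_{E_0}(\zma)$, the multiplicity of the maximal ideal cycle along the central $(-13)$-curve $E_0$, and to show that the three enumerated cases correspond precisely to $m=1$, $m=2$ with $\zma=2\zmi$, and $m=2$ with $\zma>2\zmi$ (equivalently $2\zmi\le\zma<3\zmi$). By the preceding theorem we already know $m\le 2$, and $m\ge 1$ is automatic, so a priori $m\in\{1,2\}$; the real content is to pin down the geometry in each stratum. First I would record the numerical data attached to $\Gamma$: compute $\zmi$ (the Artin/fundamental cycle) and $Z_K$ explicitly from the negative-definite intersection matrix, verify $\min\chi=-1$, $\chi(\zmi)=-1$, and $\mathrm{Path}(\Gamma)=4$ (using the two-node description of the $\chi$-function, cf. \cite{laszlo}, or the path-lattice-cohomology computation of \cite{nem.lattice}), and note that the two $(-1)$-curves together with the graph's symmetry force $\zmi$ to have $E_0$-coefficient $1$. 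This bookkeeping is routine but must be done carefully because every subsequent dichotomy is read off from it.

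Next I would treat the case $m=1$, i.e. $\zma=\zmi$. Here I would show that $\cO_{\zmi}$ already computes $p_g$ — more precisely that $h^1(\cO_{\zmi})=p_g$ and equals $3$ — and that the resulting singularity is a Kodaira singularity in the sense of Laufer (the generic hyperplane section, or an appropriate one, is a curve whose total transform on $\tX$ is nice), by exhibiting the curve configuration explicitly using the $(-1)$-curves; that $\X$ is \emph{not} Gorenstein in this case follows from comparing $p_g=3$ with the value forced by $Z_K$ and the Gorenstein symmetry (a Gorenstein structure on $\Gamma$ must, by the earlier theorem, satisfy $p_g=3$ \emph{and} have $\zma=2\zmi$, ruling out $m=1$). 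The detailed realization as a Kodaira singularity is deferred to \thmref{t:ma=mi}, so here I would only isolate the numerical/cohomological characterization of the stratum.

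For the case $m=2$ I would split according to whether $\zma=2\zmi$ or $\zma>2\zmi$. When $\zma=2\zmi$ I would invoke the splice-quotient / splice-type machinery: the graph $\Gamma$ satisfies the semigroup and congruence (end-curve) conditions, so splice-type equations exist, they are Gorenstein complete intersections (Neumann–Wahl), the maximal ideal cycle of a splice-type singularity on this $\Gamma$ is computed to be $2\zmi$, and $p_g=3$ by the Gorenstein value already established; conversely $\zma=2\zmi$ together with $m=2$ forces enough functions into $H^0(\cO_{\tX}(-\zmi))$ to reconstruct a splice-type structure, giving the equivalence. The residual stratum $m=2$, $\zma\neq 2\zmi$ must then satisfy $2\zmi\le\zma$ (since $\coeff_{E_0}(\zma)=2=\coeff_{E_0}(2\zmi)$ and one checks coefficientwise, using the $(-1)$-curves and the leaves, that $\zma\ge 2\zmi$) and $\zma<3\zmi$ (since $\coeff_{E_0}(\zma)=2<3$); the three sub-cases are the three admissible cycles $\zma$ strictly between $2\zmi$ and $3\zmi$ permitted by the leaf and chain constraints, enumerated by hand. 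In this last stratum one shows $p_g=2$ — the upper bound $p_g\le 3$ is from the earlier theorem, $p_g\ge 1-\min\chi=2$ from \eqref{eq:min}, and $p_g\le 2$ because $p_g=3$ would force the Gorenstein value and hence $\zma=2\zmi$, a contradiction — and non-Gorensteinness is then immediate. The hard part, I expect, will be the last stratum: proving the \emph{exact} inequality $2\zmi\le\zma<3\zmi$ (both that $m=2$ genuinely forces $\zma\ge 2\zmi$ and the enumeration of exactly three intermediate cycles) and, relatedly, constructing analytic structures that actually realize each of the three sub-cases so that the trichotomy is not vacuous; this realization, together with the splice-type and Kodaira constructions, is where the explicit equations of \thmref{t:splice}, \thmref{t:ma=mi}, \thmref{t:gen} and Section~\ref{s:nez2z} do the work.
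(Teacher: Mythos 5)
Your organizing principle (stratify by $\coeff_{E_0}(\zma)$, then by the finitely many anti-nef cycles with that coefficient, namely $\zmi$, $2\zmi$, $E_1^*$, $E_4^*$) is the same as the paper's, but the proposal contains a structural misreading and two false logical steps that together leave the hardest part of the theorem unproved. Case (3) of the statement is \emph{not} the stratum $\zma\neq 2\zmi$: the three cases are $\zma\in\{2\zmi,\,E_1^*,\,E_4^*\}$, so a non-Gorenstein structure with $\zma=2\zmi$ belongs to case (3), not case (2). Your claimed equivalence ``$\zma=2\zmi$ iff splice type'' is therefore false; the semigroup condition only guarantees that \emph{some} splice-type structure exists on $\Gamma$, not that every structure with $\zma=2\zmi$ is of splice type. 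The paper's \thmref{t:gen} is devoted precisely to the non-Gorenstein $\zma=2\zmi$ stratum, and its proof is the technical heart of the trichotomy: if $p_g=3$ there, then $h^1(\cO_E(-2E))=1$, hence $\cO_E(K+3E)\cong\cO_E$, and a computation with the $\C^*$-action on $\Ext^1(\cO_E,\cO_E(-E))\cong\C$ (Claim~\ref{cl:M}) shows the extension $\cO_{2E}(K+3E)$ must be isomorphic to $\cO_{2E}$, which would make $\X$ Gorenstein --- a contradiction, so $p_g=2$. Nothing in your proposal substitutes for this argument; likewise, in the Gorenstein subcase the paper must verify the end curve condition for the \emph{given} structure (Lemmas \ref{l:e4} and \ref{l:e3*}), which your one-line ``forces enough functions'' does not do.

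Two of your reductions are moreover circular or rest on false implications. You rule out Gorenstein in the $\zma=\zmi$ case by asserting that a Gorenstein structure must have $\zma=2\zmi$; that is one of the corollaries \emph{of} the trichotomy and cannot be used to prove it. (The paper instead shows $\zco=2E<3E=Z_K$ in the Kodaira case and applies Konno's criterion, \thmref{t:chc}(2).) Symmetrically, you deduce $p_g\le 2$ in the last stratum from ``$p_g=3$ would force the Gorenstein value'': the implication Gorenstein $\Rightarrow p_g=3$ does not reverse, and the Kodaira case is exactly a non-Gorenstein structure with $p_g=3$. The paper gets $p_g=2$ for $\zma=E_4^*$ by a direct computation (\lemref{l:E4bs}): the hypothesis gives $h^1(\cO_{\tX}(-E_4^*))=1+h^1(\cO_{\tX}(-2\zmi))$, while restricting to $D=Z_K-E_4^*$ gives $h^1(\cO_{\tX}(-E_4^*))=1$, so $h^1(\cO_{\tX}(-2\zmi))=0$ and $p_g=2$ by \eqref{eq:2-1} and \eqref{eq:pg-2}. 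Finally, a small but telling slip: $h^1(\cO_{\zmi})=2$ for \emph{every} analytic structure on $\Gamma$ (since $\chi(\zmi)=-1$), so it never equals $p_g=3$ in the Kodaira case; the cohomological cycle there is $2\zmi$, not $\zmi$.
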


\begin{cor}
The following are equivalent:
\begin{enumerate}
\item $\zma=\zmi$;
\item $\X$ is a Kodaira singularity.
\end{enumerate}
\end{cor}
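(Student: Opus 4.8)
The plan is to read the corollary off from the trichotomy of the preceding theorem, together with its constituent results \thmref{t:ma=mi}, \thmref{t:splice} and \thmref{t:gen}, plus the bound $\coeff_{E_0}(\zma)\le 2$ established earlier (cf. \sref{s:<4}). The implication $(2)\Rightarrow(1)$, i.e. ``$\X$ Kodaira $\Rightarrow\zma=\zmi$'', is the substantial half; the converse $(1)\Rightarrow(2)$ is nothing new, since it is precisely the first bullet of that theorem, proved in \thmref{t:ma=mi}: the equality $\zma=\zmi$ already forces $\X$ to be a (non--Gorenstein) Kodaira singularity.

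For $(1)\Leftarrow(2)$ I would argue by eliminating the other two cases of the trichotomy. The point is that the three families are separated by the single number $\coeff_{E_0}(\zma)$: an immediate computation of the Artin (fundamental) cycle of $\Gamma$ gives $\coeff_{E_0}(\zmi)=1$, so in case (1) one has $\coeff_{E_0}(\zma)=1$, whereas in case (2), where $\zma=2\zmi$, and in case (3), where $2\zmi\le\zma<3\zmi$, one has $\coeff_{E_0}(\zma)=2$ (using $\zma\ge\zmi$ together with the upper bound $\coeff_{E_0}(\zma)\le 2$). Hence it suffices to prove that a Kodaira singularity supported on $\Gamma$ satisfies $\coeff_{E_0}(\zma)\le 1$, and then $(1)$ follows from the trichotomy.

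To obtain this I would use the defining geometry of a Kodaira singularity: there is a resolution $\widehat X\to X$ sitting inside a proper map $f\colon\widehat X\to(\C,0)$ whose exceptional set is contained in the central fibre, say $f^{-1}(0)=Z_f+D$ with $Z_f$ supported on the exceptional divisor and $D$ the strict transform of a curve through $o$. Then $h:=f\in\m\cO_{\widehat X}$ has $\di(h)=Z_f+D$, so after pushing forward to the minimal good resolution $\zma\le Z_f$, and in particular $\coeff_{E_0}(\zma)\le\coeff_{E_0}(Z_f)$ (the coefficient along $E_0$ is unchanged by the push-forward, since $E_0$ is not among the curves created by the extra blow-ups). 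Thus one is reduced to showing that every pencil compatible with $\Gamma$ has multiplicity $1$ along $E_0$: since $f^{-1}(0)$ meets each of its own components trivially, the multiplicity $m_0$ of $E_0$ satisfies $13\,m_0=(\text{sum of multiplicities of the fibre components meeting }E_0)$, and combining this with the constancy of the arithmetic genus along the pencil and the shape of $\Gamma$ (notably $E_0^2=-13$) one pins down $m_0=1$ — concretely, the fibre is forced to be $\zmi$ plus one $(-1)$--curve attached to $E_0$, a genus--$2$ configuration. Equivalently, and perhaps more cleanly, one invokes the explicit descriptions in \thmref{t:splice} and \sref{s:nez2z} and checks directly that the Gorenstein splice singularity of case (2) and the three singularities of case (3) admit no such function/pencil, i.e. are not Kodaira.

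The main obstacle is exactly this last step: proving the dichotomy ``$\X$ Kodaira on $\Gamma$ $\Rightarrow\coeff_{E_0}(\zma)=1$'', equivalently that none of the non--case--(1) analytic structures on $\Gamma$ is Kodaira. Once that is settled, the corollary is purely formal bookkeeping with the trichotomy theorem.
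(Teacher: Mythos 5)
The implication $(1)\Rightarrow(2)$ is indeed just \thmref{t:ma=mi}, and your reduction of $(2)\Rightarrow(1)$ to the claim ``Kodaira on $\Gamma$ implies $\coeff_{E_0}(\zma)=1$'' is logically sound given the trichotomy. The problem is that this claim is exactly where your proof stops: you flag it yourself as ``the main obstacle'' and neither of your two suggested routes is carried out. The fibre relation $13\,m_0=\sum(\text{multiplicities of components meeting }E_0)$ does not by itself pin down $m_0=1$ (for instance $2\zmi$ plus two multiplicity--one sections through $E_0$ satisfies it numerically, since $26=12+12+2$), so everything rests on the unexecuted appeal to constancy of the arithmetic genus; and the alternative of checking case by case that the splice singularity of \thmref{t:splice} and the three structures of \sref{s:nez2z} admit no compatible pencil is likewise only announced. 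As written, the substantial half of the corollary is asserted rather than proved.

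The intended argument is much shorter and bypasses the trichotomy entirely. By Karras \cite[Prop.\ 2.7, Lemma 2.9.1]{kar.p} --- the very results invoked in \sref{s:Kod} to construct the Kodaira structure --- the pencil defining a Kodaira singularity restricts to a function $h\in\m_{X,o}$ on the resolution whose exceptional part $(h)_E$ is the Artin cycle, i.e.\ $(h)_E=\zmi$ (this is visible in \sref{s:Kod}, where the central fibre of $h_W$ is $\zmi$ plus one transversal section through the $(-13)$--curve). Hence $\zma\le(h)_E=\zmi$, and since $\zma\ge\zmi$ always, equality follows. If you prefer not to cite Karras, you must actually prove that no pencil realizes the cases $\zma\in\{2\zmi,E_1^*,E_4^*\}$; until one of these is done, the proof has a genuine gap.
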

\begin{cor}
The following are equivalent:
\begin{enumerate}
\item $\zma=2\zmi$, $p_g\X=3$;
\item $\X$ is of splice type (complete intersection);
\item $\X$ is Gorenstein.
\end{enumerate}
\end{cor}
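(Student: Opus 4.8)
The plan is to read this corollary off the preceding classification theorem (its trichotomy into cases (1), (2), (3)), together with one external input: by Neumann--Wahl, a splice type singularity is a complete intersection, in particular Gorenstein. Concretely, I would establish the cycle of implications $(1)\Rightarrow(2)\Rightarrow(3)\Rightarrow(1)$.

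$(1)\Rightarrow(2)$: assume $\zma=2\zmi$ and $p_g\X=3$. By the classification theorem $\X$ lies in exactly one of cases (1), (2), (3). It is not in case (1), since there $\zma=\zmi$, while $\zmi\neq 2\zmi$ because the Artin cycle $\zmi$ is a nonzero effective cycle. It is not in case (3) either, because there $p_g\X=2\neq 3$; note that the inequality $2\zmi\le\zma<3\zmi$ defining case (3) allows $\zma=2\zmi$, so the hypothesis on $p_g$ is genuinely needed in order to separate case (2) from the boundary of case (3). Hence $\X$ is in case (2), i.e.\ of splice type, and by Neumann--Wahl it is then a complete intersection.

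$(2)\Rightarrow(3)$: a complete intersection is Gorenstein, so any splice type singularity is Gorenstein. $(3)\Rightarrow(1)$: assume $\X$ is Gorenstein. In the classification theorem, cases (1) and (3) produce singularities that are explicitly \emph{not} Gorenstein, so $\X$ must be in case (2). Therefore $\zma=2\zmi$ and $p_g\X=3$ (and in fact $\X$ is of splice type, hence a complete intersection).

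Once the preceding classification theorem is in hand there is essentially no obstacle; the only points to be careful about are the trivial but essential remark that $\zmi>0$, which makes the alternatives $\zma=\zmi$ and $\zma=2\zmi$ mutually exclusive, and the citation of the Neumann--Wahl result that splice type singularities are complete intersections, which justifies the parenthetical "(complete intersection)" in statement (2) and supplies the implication $(2)\Rightarrow(3)$.
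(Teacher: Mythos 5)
Your proof is correct and follows essentially the same route the paper intends: the corollary is read off directly from the trichotomy of the preceding classification theorem (backed by Theorems \ref{t:ma=mi}, \ref{t:splice}, \ref{t:gen} and Section \ref{s:nez2z}), with the Neumann--Wahl fact that splice type singularities are complete intersections, hence Gorenstein, supplying $(2)\Rightarrow(3)$. Your care about the boundary overlap $\zma=2\zmi$ between cases (2) and (3) of the trichotomy, resolved by the hypothesis $p_g=3$ versus the value $p_g=2$ in case (3), is exactly the right point to check.
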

For  Kodaira  (or Kulikov) singularities see  \cite{kar.p,Stevens85},
for splice singularities see \cite{nw-HSL}.

\begin{rem}
(1) In general,
 a Gorenstein singularity with integral homology sphere link and with
 $\zmi^2=-1$ is not necessarily  of splice type.
An example can be found in
\cite[4.6]{supiso} (where the minimal good graph is even  star-shaped).

(2) For the two  cases with
$p_g=3$ (non--Gorenstein Kodaira and splice complete intersection) we provide precise realizations; however for the $p_g=2$ cases
we will not give the realizations (e.g. equations) in this article.

(3)
The next table lists all the possible analytic structures supported by $\Gamma$
with some of their
key properties. $E$ is the exceptional curve
of the minimal resolution. For the notation $E_i^*$ see Section \ref{s:2}.
\[
\begin{array}{c|c|c|c|c|c|c}
\hline
\zma & \text{Gorenstein} & p_g & h^1(\cO_E(-E)) & h^1(\cO_E(-2E)) & \mult & \emb \\
\hline
\zmi & \text{No (Kodaira)}  & 3 & 1 & 0 & 3 & 4 \\
\hline
2\zmi & \text{Yes (splice)}& 3 & 0 & 1 & 4 & 4 \\
\hline
2\zmi \ \mbox{or} & \text{No} & 2 & 0 & 0 & 6 & 7 \\

E^*_1\ \mbox{or} \ E^*_4 &  &  &  &  &  &  \\
\hline
\end{array}
\]

(4) In most of the proofs we use `computation sequences'.
Computation sequences  were introduced and deeply exploited
 by Laufer, they constitute a powerful machinery in the theory  of surface singularities. The present manuscript supports this fact as well.
\end{rem}

\begin{rem} After we finished our manuscript the referee drew our attention to the excellent article \cite{Konno-yaucycles} of K. Konno, which we were not aware of. We thank the referee for this information. Indeed, our proofs and arguments
 and some of the statements have overlaps with the results of this article,
 which contains several important results regarding the key cycles of a resolution of a normal surface singularity.

After this information, however,  we decided not to change the structure (and the proofs) of our statements, in this way the present manuscript still remains
(more or less) self-contained and more readable. In this Remark we wish to
list some of the overlaps and give the credits to  \cite{Konno-yaucycles}.
(Definitely, this list covers only the overlaps, and not the huge
amount of results of  \cite{Konno-yaucycles}.)

 In  \cite{Konno-yaucycles} the author studies singularities with $Z_{min}^2=-1$. Our main example belongs to this family too,
 in fact, it even  belongs to the
 simplest class of `essentially irreducible $Z_{min}$' of Konno. For example,
 in `essentially irreducible $Z_{min}$' case, the fact that $p_g\leq 4$ when
 $Z_{min}^2=-1$ and $\chi(Z_{min})=\min \chi= -1$ is shown in Theorem 3.9
 of  \cite{Konno-yaucycles}. Furthermore, in
 \cite[Th. 3.9]{Konno-yaucycles} is also stated
 that the singularity must be a doublepoint whenever $p_g=4$. (This
 overlaps with the first part of our Theorem \ref{t:pg3}.)
 Also, the calculations of the present note in the Gorenstein
 case (\S\ref{s:2},I) is much similar to \cite[Th. 3.11]{Konno-yaucycles},
 which might even shorten slightly the proof of our Theorem \ref{t:splice}.
 A related statement can be found also in \cite[Lemma 3.4]{Konno-yaucycles}.
\end{rem}

 \begin{acknowledgement}
The second  author thanks
the R\'enyi Institute of Mathematics, Budapest, Hungary,
for the warm  hospitality during his visit.

\end{acknowledgement}

\section{Preliminary}\label{s:pre}
Let $\X$ be  a normal complex surface singularity and $\pi\:\tX\to X$ a resolution with exceptional set $E$.
 Let $\{E_i\}_{i\in \cI}$ denote the set of irreducible components of $E$.
We denote by $\Gamma$ the resolution graph of $\X$.
The group of cycles is defined by $L:=\sum _{i\in \cI}\Z E_i$.
Let us simplify into $DD'$ the intersection number $(D,D')$.
For any function $f\in H^0(\cO_{\tX})$, $f\ne 0$, let $(f)_E$ denote the exceptional part of $\di (f)$, namely, $(f)_E=\sum_{i\in \cI}\ord_{E_i}(f\circ \pi)E_i \in L$.
A divisor $D$ on $\tX$ is said to be nef (resp. anti-nef) if $DE_i\ge 0$ (resp. $DE_i\le 0$) for all $i\in \cI$.

We write $h^i( * )=\dim_{\C}H^i(*)$.
Moreover, for an effective cycle $l\in L$ we write
$H^i(l):=H^i(\cO_{l})$,
and  $\chi(l)$ denotes the Euler characteristic
$\chi(\cO_l)=h^0(l)-h^1(l)$.  
By Riemann--Roch formula, for a divisor $D$ on $\tX$,
\[
\chi(\cO_l(D))=h^0(\cO_l(D))-h^1(\cO_l(D))=\chi(l)+Dl=-(l^2-Z_Kl)/2+Dl,
\]
where $Z_K$ denotes the canonical cycle (see \defref{d:cycles}).
The expression $\chi(l) = -(l^2-Z_Kl)/2 $ is extended for any $l\in L$.

\begin{defn}\label{d:cycles}
We define the (minimal) Artin  cycle $\zmi$, the maximal ideal cycle $\zma$, and the cohomological cycle $\zco \in L$ as follows:
\begin{enumerate}
\item $\zmi=\min\defset{Z>0}{\text{$Z$ is anti-nef}}$.
\item $\zma=\min\defset{(f)_E}{f\in \m_{X,o}}$, where $\m_{X,o}$ is the maximal ideal of $\cO_{X,o}$.
\item $\zco=\min\defset{Z>0}{h^1(\cO_Z)=p_g\X}$ if $p_g\X>0$. $\zco=0$ if $p_g\X=0$.
\item
The canonical cycle $Z_K\in L\otimes \Q$ is defined by $K_{\tX}E_i=-Z_KE_i$ for all $i\in \cI$.
If $Z_K\in L$, then $\X$ or $\Gamma$ is said to be numerically Gorenstein.
\end{enumerate}\end{defn}

For the  existence of the unique cohomological cycle on any resolution (with the property
$h^1(Z)<p_g$ for any $Z\not\geq \zco$) see Reid \cite[\S 4.8]{chap}.
One has $\zco\leq \lfloor Z_K\rfloor$.

 Recall that $\X$ is Gorenstein if and only if $-Z_K\sim K_{\tX}$ (linear equivalence on $\tX$).

\begin{rem}
Let $k$ be a positive integer.

(1)
If $\zma=k\zmi$, $\tX$ is the minimal resolution, and $\cO_{\tX}(-\zma)$ has no base point, then the same equality holds on any resolution.

(2)
If $\zma=k\zmi$ on a resolution, then the same equality holds on the minimal resolution.
\end{rem}

\begin{thm}[{Konno \cite[\S 3]{Konno-coh}}]
\label{t:chc}
(1)  If $\X$ is Gorenstein  and  $p_g\X\ge 2$, then $p_g\X> p_a(\zmi)=1-\chi(\zmi)$.

(2) Assume that $\X$ is numerically Gorenstein and $Z_K\ge 0$. Then $\X$ is Gorenstein if and only if $Z_K=\zco$.
\end{thm}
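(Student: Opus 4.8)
The plan is to treat the two parts separately, in both cases exploiting Serre-type duality on the exceptional divisor together with the defining properties of the Artin cycle and the canonical cycle.

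For part (1), I would start from the fact that, on a resolution $\tX$ of a Gorenstein singularity, one has $K_{\tX}=-Z_K$ as divisors up to linear equivalence, so for any effective cycle $Z$ the sheaf $\omega_Z$ of regular differentials on $Z$ is $\cO_Z(K_{\tX}+Z)=\cO_Z(Z-Z_K)$. By Grothendieck--Serre duality on the (one-dimensional, possibly non-reduced) scheme $Z$, $h^1(\cO_Z)=h^0(\omega_Z)=h^0(\cO_Z(Z-Z_K))$; applying this with $Z=\zmi$ gives $p_a(\zmi)=h^1(\cO_{\zmi})=h^0(\cO_{\zmi}(\zmi-Z_K))$. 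The key point is then that $\zmi-Z_K$ is \emph{anti-nef} as a $\Q$-divisor restricted to each $E_i$: indeed $Z_KE_i=-K_{\tX}E_i=E_i^2+2p_a(E_i)-2\le E_i^2$ whenever $\zmi E_i\le 0$ and $p_a(E_i)$ contributes the correct sign — more carefully, one uses that $\zmi$ is the minimal anti-nef cycle and that $Z_K$ dominates it in the appropriate sense. This forces $h^0(\cO_{\zmi}(\zmi-Z_K))$ to be controlled: sections of a line bundle of non-positive degree on each component are constants, so this $h^0$ is at most $1$, and equals $1$ only in degenerate situations. Combined with the identity $p_g\X=h^1(\cO_{\lfloor Z_K\rfloor})$ and the monotonicity $h^1(\cO_Z)\le p_g$, one concludes that either $p_g\le 1$ or $p_g>p_a(\zmi)$. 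The main obstacle here is handling the non-reducedness of $\zmi$ and making the "degree non-positive $\Rightarrow$ only constant sections" argument rigorous on a non-reduced curve; this is where a computation-sequence argument (decomposing $\zmi$ as $l_0=0,\dots,l_t=\zmi$ and tracking $h^0(\cO_{l_k}(l_k-Z_K))$ along the short exact sequences $0\to\cO_{E_{i(k)}}(l_k-E_{i(k)}-Z_K)\to\cO_{l_{k+1}}(\cdot)\to\cO_{l_k}(\cdot)\to 0$) replaces the naive reduced argument.

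For part (2), the "only if" direction is the nontrivial one: assume $\X$ Gorenstein and numerically Gorenstein with $Z_K\ge 0$; I must show $Z_K=\zco$. Since $\zco\le\lfloor Z_K\rfloor=Z_K$ always (as recalled after \defref{d:cycles}), it suffices to show $\zco\ge Z_K$, i.e. that $h^1(\cO_Z)<p_g$ for every $Z\not\ge Z_K$. By the uniqueness property of the cohomological cycle, it is enough to show $h^1(\cO_{Z_K-E_i})<p_g$ for every $i$ with $\coeff_{E_i}(Z_K)>0$. Using the exact sequence $0\to\cO_{E_i}(-(Z_K-E_i))\to\cO_{Z_K}\to\cO_{Z_K-E_i}\to 0$ one gets $p_g-h^1(\cO_{Z_K-E_i})=h^1(\cO_{Z_K})-h^1(\cO_{Z_K-E_i})\le h^1(\cO_{E_i}(E_i-Z_K))$, and by Serre duality on $E_i$ (a smooth rational or higher-genus curve) this equals $h^0(\cO_{E_i}(K_{\tX}|_{E_i}))$ — but in the Gorenstein case $\cO_{\tX}(K_{\tX})\cong\cO_{\tX}(-Z_K)$, and the adjunction/Gorenstein structure forces the relevant restricted bundle to have a section precisely when $E_i$ is in the support of the cohomological cycle, giving the strict drop. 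Conversely, "if $Z_K=\zco$ then Gorenstein" should follow because $\zco=Z_K$ forces $h^1(\cO_{Z_K})=p_g$ together with a dimension/duality count showing $\omega_{\tX}\cong\cO_{\tX}(-Z_K)$ has the right number of sections, i.e. the canonical map is an isomorphism.

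The step I expect to be the genuine obstacle is the rigorous "non-positive degree bundle on a non-reduced cycle has only the obvious sections" lemma needed in part (1) — in the actual paper this is presumably where one invokes Konno's machinery or a careful computation sequence, so in a self-contained writeup I would isolate it as a preliminary lemma (of the form: if $D$ is a divisor with $DE_i\le 0$ for all $i$ and $DE_{i_0}<0$ for some $i_0$ meeting the support of the effective cycle $Z$, then $h^0(\cO_Z(D))<h^0(\cO_{Z'}(D))$ for a suitable subcycle, hence ultimately $h^0(\cO_Z(D))\le$ the number of "free" components) and prove it by induction along a computation sequence before assembling the main argument.
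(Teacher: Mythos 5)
First, a point of comparison: the paper itself does not prove this statement --- it is quoted from Konno \cite{Konno-coh} --- so there is no internal proof to measure yours against, and your attempt must stand on its own. It does not: part (1) contains a fatal error. You reduce $p_a(\zmi)=h^1(\cO_{\zmi})$ to $h^0(\cO_{\zmi}(\zmi-Z_K))$ by duality and then argue this is at most $1$ because $\zmi-Z_K$ has non-positive degree on every component. Both the intermediate claim and the conclusion are false, and the paper's own example refutes them: on the $(-13)$-curve $E_0$ one has $(\zmi-Z_K)E_0=-1-(-11)=10>0$, and $p_a(\zmi)=1-\chi(\zmi)=2$, so $h^0(\omega_{\zmi})=2$. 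No computation-sequence refinement can rescue a numerically false statement; the inequality to be proved is $p_g>p_a(\zmi)$ with both sides possibly $\ge 2$ (here $3>2$ for the splice structure), not $p_a(\zmi)\le 1$. The workable route runs in the opposite order from yours: prove (2) first, then observe that $p_g\ge h^1(\cO_{\zmi})\ge 1-\chi(\zmi)$ always, so equality $p_g=p_a(\zmi)$ would force $h^0(\cO_{\zmi})=1$ and $\zmi\ge \zco=Z_K$; but then $\chi(\zmi)=-\zmi(\zmi-Z_K)/2\ge 0$ because $\zmi$ is anti-nef and $\zmi-Z_K$ is effective, giving $p_a(\zmi)\le 1<2\le p_g$, a contradiction.

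Part (2) also has a gap at its central step. The reduction to showing $h^1(\cO_{Z_K-E_i})<p_g$ for each $i$ in the support is fine, but the inequality you extract from $0\to\cO_{E_i}(E_i-Z_K)\to\cO_{Z_K}\to\cO_{Z_K-E_i}\to 0$, namely $p_g-h^1(\cO_{Z_K-E_i})\le h^1(\cO_{E_i}(E_i-Z_K))$, bounds the drop from \emph{above}, whereas you need it bounded from \emph{below} by $1$. What must actually be shown is that the connecting map $H^0(\cO_{Z_K-E_i})\to H^1(\cO_{E_i}(E_i-Z_K))$ is not surjective, and this is precisely where the Gorenstein hypothesis enters (via the $2$-form with divisor $-Z_K$ and Laufer-type duality identifying $p_g$ with $\dim H^0(\tX\setminus E,\cO(K_{\tX}))/H^0(\cO(K_{\tX}))$); that argument is absent from your sketch, and the converse implication of (2) is only asserted. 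As written, neither half of the theorem is established.
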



Next, assume that the link of $\X$ is a $\Q$-homology sphere and the graph $\Gamma$ is numerically Gorenstein.
 It is not hard to verify that
in the numerically Gorenstein case
${\rm Path}(\Gamma)={\rm Path}(Z_K)$
(a detailed proof can be found in \cite{Book}).
The next results analyse certain   cases when
the inequality  $p_g\X\le {\rm Path}(\Gamma)$ from (\ref{eq:max}) is strict.

\begin{thm}
\label{t:bd}
Assume that $\Gamma$ is numerically Gorenstein and   $Z_K> \zco$ for some
analytic structure $(X,o)$ (that is, $(X,o)$ is not Gorenstein). Then, if
one of the following properties hold:

(1) either
$\{\gamma\in \cP(Z_K)\,:\, S(\gamma)={\rm Path}(\Gamma)\}\to \{E_i\}_{i\in\cI}, \ \ \gamma\mapsto E_{i(t-1)}$, 
is surjective, or

(2) the  support $|Z_K-\zco|$ is $E$,

\noindent  then $p_g(X,o)<{\rm Path}(\Gamma)$.
\end{thm}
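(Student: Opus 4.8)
The plan is to argue by contradiction. Since $p_g(X,o)\le {\rm Path}(\Gamma)$ always holds by (\ref{eq:max}) and ${\rm Path}(\Gamma)={\rm Path}(Z_K)$ under the numerically Gorenstein assumption, it suffices to exclude the equality $p_g(X,o)={\rm Path}(Z_K)$. The engine of the argument is a computation at the \emph{last step} of a monotone computation sequence. If $\gamma=\{l_k\}_{k=0}^{t}\in\cP(Z_K)$ then $l_{t-1}=Z_K-E_{i(t-1)}$, and since $Z_KE_{i(t-1)}=-K_{\tX}E_{i(t-1)}$, adjunction on $\tX$ combined with the standing assumption that $E_{i(t-1)}$ is rational gives
\[
-(l_{t-1},E_{i(t-1)})=(K_{\tX}+E_{i(t-1)})\,E_{i(t-1)}=2p_a(E_{i(t-1)})-2=-2 .
\]
Hence $\cO_{E_{i(t-1)}}(-l_{t-1})$ is a line bundle of degree $-2$ on $\bP^1$, so its $h^1$ equals $\max\{0,(l_{t-1},E_{i(t-1)})-1\}=1$; i.e.\ the last summand defining $S(\gamma)$ always equals $1$. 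This already settles $p_g(X,o)=0$: then $\zco=0$, so $Z_K>0$, and the displayed identity forces ${\rm Path}(\Gamma)={\rm Path}(Z_K)\ge 1>0=p_g(X,o)$. So from now on assume $p_g:=p_g(X,o)\ge 1$ and, for contradiction, $p_g={\rm Path}(Z_K)$.

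The heart of the proof is the following claim: \emph{for every $\gamma=\{l_k\}_{k=0}^{t}\in\cP(Z_K)$ with $S(\gamma)={\rm Path}(\Gamma)$ one has $\coeff_{E_{i(t-1)}}(Z_K)=\coeff_{E_{i(t-1)}}(\zco)$.} To prove it I telescope the step inequalities $h^1(\cO_{l_{k+1}})-h^1(\cO_{l_k})\le\max\{0,(E_{i(k)},l_k)-1\}$ underlying (\ref{eq:maxZ}) from $l_0=0$ to $l_t=Z_K$. Using $h^1(\cO_{Z_K})=p_g$ (valid since $Z_K=\lfloor Z_K\rfloor\ge 0$), the left-hand sides sum to $p_g$ and the right-hand sides sum to $S(\gamma)={\rm Path}(\Gamma)=p_g$; hence each step inequality is an equality. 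Applied at $k=t-1$, where the right side equals $1$, this gives $h^1(\cO_{l_{t-1}})=p_g-1$. If $l_{t-1}\ge\zco$ held, then the exact sequence $0\to\cO_{l_{t-1}-\zco}(-\zco)\to\cO_{l_{t-1}}\to\cO_{\zco}\to 0$ would yield a surjection $H^1(\cO_{l_{t-1}})\twoheadrightarrow H^1(\cO_{\zco})$, forcing $h^1(\cO_{l_{t-1}})\ge h^1(\cO_{\zco})=p_g$, a contradiction. Therefore $l_{t-1}=Z_K-E_{i(t-1)}\not\ge\zco$; since $Z_K\ge\zco$ this can fail only through $\coeff_{E_{i(t-1)}}(Z_K)-1<\coeff_{E_{i(t-1)}}(\zco)$, and together with $\coeff_{E_{i(t-1)}}(Z_K)\ge\coeff_{E_{i(t-1)}}(\zco)$ we obtain the claimed equality.

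Now I would feed each hypothesis into the claim. Under (1): because $Z_K>\zco$ there is a component $E_j$ with $\coeff_{E_j}(Z_K)>\coeff_{E_j}(\zco)$; surjectivity of $\gamma\mapsto E_{i(t-1)}$ on the (nonempty) set of optimal sequences produces some $\gamma\in\cP(Z_K)$ with $S(\gamma)={\rm Path}(\Gamma)$ and $E_{i(t-1)}=E_j$, and the claim then forces $\coeff_{E_j}(Z_K)=\coeff_{E_j}(\zco)$---a contradiction. Under (2): $|Z_K-\zco|=E$ means $\coeff_{E_i}(Z_K)>\coeff_{E_i}(\zco)$ for \emph{all} $i\in\cI$, so choosing any optimal $\gamma\in\cP(Z_K)$---one exists since ${\rm Path}(\Gamma)={\rm Path}(Z_K)$ is the minimum of $S$ over the nonempty finite set $\cP(Z_K)$---and applying the claim to $E_{i(t-1)}$ produces the same contradiction. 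In either case $p_g(X,o)<{\rm Path}(\Gamma)$.

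The one point requiring care---and the only place where rationality of the $E_i$ enters---is the last-step computation: on $\bP^1$ a line bundle is determined by its degree, so the degree $-2$ bundle $\cO_{E_{i(t-1)}}(-l_{t-1})$ must contribute a full unit to $S(\gamma)$, while the Path inequality carries exactly one unit of slack for an optimal sequence; the argument is really about showing that this slack is forced to manifest as the coefficient equality above. For a curve of positive genus the analogous bundle could have vanishing $h^1$ and the whole mechanism would collapse. Everything else is bookkeeping with (\ref{eq:maxZ}) and the defining property of the cohomological cycle $\zco$.
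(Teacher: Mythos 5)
Your proof is correct and follows essentially the same route as the paper: both hinge on the observation that the last step of an optimal computation sequence contributes exactly one unit to $S(\gamma)$, which under the equality $p_g={\rm Path}(\Gamma)$ forces $h^1(\cO_{Z_K-E_{i(t-1)}})<p_g$ and hence $Z_K-E_{i(t-1)}\not\ge\zco$, contradicting $Z_K>\zco$ via hypothesis (1) or (2). The only cosmetic difference is that you run case (2) by contradiction through the same coefficient-equality claim, whereas the paper derives the strict bound $p_g\le S(\gamma')<S(\gamma)$ directly from the truncated path.
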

\begin{proof} We prove that if $p_g={\rm Path}(\Gamma)$ and
the surjectivity (1) holds then $\zco=Z_K$. Indeed,
 the assumption $p_g={\rm Path}(\Gamma)$ implies that along a path
 (any path) $\gamma$ with $p_g={\rm Path}(\Gamma)
 =S(\gamma) $,
 whenever $p_g$ can grow with $E_{i(k)}l_k-1>0$, it necessarily grows with this amount.
 On the other hand,
for any choice of $\gamma$, $l_{t-1}$ has the form
$Z_K-E_{i(t-1)}$. Since $l_{t-1} E_{i(t-1)}-1=
2\chi(E_{i(t-1)})-1=1$, the assumption $p_g={\rm Path}(\Gamma)$ implies that
necessarily   $h^1(Z_K-E_{i(t-1)})<h^1(Z_K)=p_g$. By the surjectivity (1)
we get that
this must be the case for any $E_i$, that is, $h^1(Z_K-E_{i})<h^1(Z_K)=p_g$ for any $i\in\cI$.
This shows that $\zco=Z_K$.

Suppose that the condition (2) holds. Fix
 $\gamma\in \cP(Z_K)$, $\gamma=\{l_k\}_{k=0}^t$,
with $S(\gamma)={\rm Path}(\Gamma)$. Let
$\gamma'$ be the shorter path $\gamma'=\{l_k\}_{k=0}^{t-1}$. Then by similar  computation as above $S(\gamma')=S(\gamma)-1$. Hence,
by (\ref{eq:maxZ}),
$p_g=h^1(\zco)\leq
h^1(\cO_{Z_K-E_{i(t-1)}})\leq S(\gamma')<S(\gamma)={\rm Path}(\Gamma)$.
\end{proof}
\begin{ass}
From now on, we assume that the minimal good resolution graph $\Gamma$ of $\X$ is as in \figref{fig:gamma}.
\end{ass}
The cycles $\zmi$ and $Z_K$ are shown in the next picture:

\begin{picture}(300,45)(10,0)
\put(25,25){\circle*{4}}
\put(50,25){\circle*{4}}
\put(75,25){\circle*{4}}
\put(100,25){\circle*{4}}
\put(125,25){\circle*{4}}
\put(50,5){\circle*{4}}
\put(100,5){\circle*{4}}
\put(25,25){\line(1,0){100}}
\put(50,25){\line(0,-1){20}}
\put(100,25){\line(0,-1){20}}
\put(25,35){\makebox(0,0){$2$}}
\put(50,35){\makebox(0,0){$6$}}
\put(75,35){\makebox(0,0){$1$}}
\put(100,35){\makebox(0,0){$6$}}
\put(125,35){\makebox(0,0){$2$}}
\put(60,5){\makebox(0,0){$3$}}
\put(110,5){\makebox(0,0){$3$}}

\put(225,25){\circle*{4}}
\put(250,25){\circle*{4}}
\put(275,25){\circle*{4}}
\put(300,25){\circle*{4}}
\put(325,25){\circle*{4}}
\put(250,5){\circle*{4}}
\put(300,5){\circle*{4}}
\put(225,25){\line(1,0){100}}
\put(250,25){\line(0,-1){20}}
\put(300,25){\line(0,-1){20}}
\put(225,35){\makebox(0,0){$5$}}
\put(250,35){\makebox(0,0){$14$}}
\put(275,35){\makebox(0,0){$3$}}
\put(300,35){\makebox(0,0){$14$}}
\put(325,35){\makebox(0,0){$5$}}
\put(260,5){\makebox(0,0){$7$}}
\put(310,5){\makebox(0,0){$7$}}
\end{picture}

One easily verifies that $\chi(\zmi)=-1$, hence $h^1(\zmi)=2$, which implies $p_g\geq 2$.
(In fact, ${\rm min}\chi$ is also $-1$, cf. \cite[4.4.1]{nem.lattice}.)

For any path $\gamma=\{l_k\}_k$ we say that $\gamma$ has a simple jump at $k$ if $E_{i(k)}l_k=2$. 

Let us prove first that for the above graph one has ${\rm Path}(\Gamma)\leq 4$. For this we have to construct a path
with (at most) four simple jumps.

We start with $l_0=0$, then we add a base-element, say the $(-13)$--vertex $E_0$.
Then there exists a `Laufer computation sequence' starting from $E_0$ and ending with $\zmi$, determined by
Laufer's algorithm (for the Artin cycle) \cite{la.ra}, which has exactly two simple jumps, and at all the other steps
$E_{i(k)}l_k=1$. Next, we add a base--element   (say $E_5$, one of the $(-1)$--base cycles) to $\zmi$. Then, again,
there is a computation sequence starting with  $\zmi+E_5$ and ending with $2\zmi$ with exactly one simple jump and
at all the other steps $E_{i(k)}l_k=1$. Finally, constructed in similar way,
there is a increasing sequence starting with $2\zmi$ and ending with $Z_K$ such that there are two
steps with $E_{i(k)}l_k=0$ (including the very first one), one simple jump, and at  all the other steps $E_{i(k)}l_k=1$.
(Since $\chi(Z_K-E_i)=1>\chi(Z_K)=0$, a jump necessarily must appear.)

This shows that ${\rm Path}(\Gamma)\leq 4$, hence for any analytic structure $p_g\leq 4$.

In Section \ref{s:Kod} we show (using also from
Section \ref{s:<4} that $p_g<4$) that the Kodaira analytic structure satisfies $p_g=3$ and $\zco \le 2\zmi\leq Z_K-E$ (cf. (\ref{eq:zcoh})). Hence, by Theorem \ref{t:bd},
 ${\rm Path}(\Gamma)=4$.

Moreover, analysing the long exact cohomological sequences at each step along the pathes considered above,
we obtain  that
\begin{equation}\label{eq:h1s}
\left\{
\begin{array}{l}
h^1(\zmi)=2, \\
h^1(2\zmi)\leq h^1(\zmi)+1,\\
h^1(Z_K)=p_g\leq h^1(2\zmi)+1.\end{array}\right.
\end{equation}
 Furthermore, the reader is invited to verify (by constructing the corresponding pathes) that the above sequence--construction procedure
 has the following additional property as well.
For any $i\in\cI$, there is a sequence  starting with $2\zmi$ and ending with $Z_K$, with
all the properties listed above, and which ends with $E_{i}$ (that is, at the very last step we have to
add $E_i$). Therefore, Theorem \ref{t:bd} and (\ref{eq:h1s}) read as follows.

\begin{cor}\label{cor:Gorenstein}
If there exists a singularity $(X,o)$
with graph  $\Gamma$ (as in Figure 1) and $p_g=4$ then $(X,o)$ should be Gorenstein and necessarily  $h^1(m\zmi)=m+1$ for $m=1,2,3$.
(Note that $3\zmi \ge Z_K$.)
\end{cor}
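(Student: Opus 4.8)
The plan is to combine the two applicability conditions of \thmref{t:bd} with the explicit path analysis already carried out for $\Gamma$. Suppose $(X,o)$ is an analytic structure on $\Gamma$ with $p_g=4$. Since ${\rm Path}(\Gamma)\le 4$ by the construction above, and (\ref{eq:max}) gives $p_g\le{\rm Path}(\Gamma)$, we must have ${\rm Path}(\Gamma)=4$ and the bound is attained. Now invoke \thmref{t:bd}: we have just remarked that for every $i\in\cI$ there is a path $\gamma\in\cP(Z_K)$ with $S(\gamma)={\rm Path}(\Gamma)=4$ whose final added vertex is $E_i$; hence the surjectivity hypothesis (1) of \thmref{t:bd} holds. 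If $(X,o)$ were not Gorenstein we would have $Z_K>\zco$, and \thmref{t:bd}(1) would force $p_g<{\rm Path}(\Gamma)=4$, a contradiction. Therefore $(X,o)$ must be Gorenstein.

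It remains to pin down the values $h^1(m\zmi)=m+1$ for $m=1,2,3$. First $h^1(\zmi)=2$ is the computation $\chi(\zmi)=-1$, $h^0(\zmi)=1$, already recorded. For the upper direction, the chain of inequalities (\ref{eq:h1s}) gives $h^1(2\zmi)\le h^1(\zmi)+1=3$ and $h^1(3\zmi)\ge h^1(Z_K)$; but $3\zmi\ge Z_K$ (check coefficient-wise against the displayed pictures of $\zmi$ and $Z_K$), and since $Z_K\ge\zco$ we have $h^1(\cO_Z)=p_g$ for all $Z\ge \lfloor Z_K\rfloor$, so $h^1(3\zmi)=p_g=4$ and similarly $h^1(Z_K)=4$. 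Feeding $h^1(Z_K)=4$ back into (\ref{eq:h1s}) forces $h^1(2\zmi)\ge 3$, hence $h^1(2\zmi)=3$; and the Laufer computation sequence from $\zmi+E_5$ to $2\zmi$ has a single simple jump, which must therefore be a genuine jump, consistent with $h^1$ growing by exactly $1$ from $\zmi$ to $2\zmi$. Thus $h^1(m\zmi)=m+1$ for $m=1,2,3$, as claimed. Finally, $3\zmi\ge Z_K$ is the parenthetical remark and is verified directly from the two cycle pictures.

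The main obstacle is bookkeeping rather than conceptual: one must be confident that the path-construction procedure sketched before the corollary really does produce, for \emph{each} vertex $E_i$ separately, an optimal path in $\cP(Z_K)$ terminating with $E_i$ and with exactly four simple jumps, so that hypothesis (1) of \thmref{t:bd} is genuinely satisfied. This requires following Laufer's algorithm on the concrete graph of \figref{fig:gamma} through the three stages $0\leadsto\zmi$, $\zmi\leadsto 2\zmi$, $2\zmi\leadsto Z_K$ and checking that the last elementary modification can be arranged to be any prescribed $E_i$ without creating extra jumps; the symmetry of $\Gamma$ and the fact that $Z_K-E_i$ has $\chi=1$ for every $i$ make this plausible, but it is the step that must be done carefully. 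Everything else is forced by the exact-sequence inequalities (\ref{eq:h1s}) together with $3\zmi\ge Z_K\ge\zco$.
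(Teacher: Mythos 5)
Your argument is correct and follows essentially the same route as the paper: the Gorenstein claim comes from \thmref{t:bd}(1) via the surjectivity of the terminal-vertex map on optimal paths (which the paper likewise leaves as a verification for the reader), and the values $h^1(m\zmi)=m+1$ are squeezed out of (\ref{eq:h1s}) together with $3\zmi\ge Z_K\ge\zco$ exactly as in the text preceding the corollary.
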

This will be an important ingredient in proving that $p_g=4$ is not realized.

In the rest of this section, we assume that $\pi\: \tX\to X$ is the {\it minimal} resolution.
Then $E$ is an irreducible curve with $E^2=-1$ and it has two ordinary cusps;
it corresponds to the $(-13)$--curve in \figref{fig:gamma}.
One verifies the following facts.
\begin{equation}\label{eq:chiE}
h^1(E)=2, \quad
\chi(\cO_E(-nE))=n-1,
\quad \chi(nE)=(n^2-3n)/2 \; \text{ for } n\ge 0.
\end{equation}
 From the exact sequence
\begin{equation}\label{eq:0-1}
0 \to \cO_{\tX}(-E)\to \cO_{\tX}\to \cO_E \to 0,
\end{equation}
we have
\begin{equation}\label{eq:pg-2}
h^1(\cO_{\tX}(-E))=p_g\X-2.
\end{equation}

By adjunction formula, we obtain that $Z_K=3E$.

By the Grauert-Riemenschneider vanishing theorem, $H^1(\cO_{\tX}(-3E))=0$. Therefore, the exact sequence
$
0\to \cO_{\tX}(-3E)\to \cO_{\tX}(-2E)\to \cO_{E}(-2E)\to 0,
$
implies
\begin{equation}\label{eq:2E}\left\{
\begin{array}{ll}
 (a) \ \ \dim \frac{H^0(\cO_{\tX}(-2E))}{H^0(\cO_{\tX}(-3E))}=\dim  H^0(\cO_E(-2E))\geq \chi(\cO_E(-2E))=1,\\ \\
 (b) \ \ \ h^1(\cO_{\tX}(-2E))=h^1(\cO_E(-2E)).\end{array}\right.
\end{equation}
Hence, the definition of $\zma$ and  (\ref{eq:2E})(a) imply the following.
\begin{prop}\label{p:zle2e}
$\zma\le 2E$ on the minimal resolution.
\end{prop}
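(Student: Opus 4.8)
The plan is to read the inequality $\zma\le 2E$ directly out of the displayed estimate (\ref{eq:2E})(a) together with the definition of $\zma$ as the minimum of the divisors $(f)_E$ over $f\in\m_{X,o}$. First I would record that on the minimal resolution the canonical cycle is $Z_K=3E$, so that $\cO_{\tX}(-3E)=\cO_{\tX}(K_{\tX})$ and the Grauert--Riemenschneider vanishing theorem gives $H^1(\cO_{\tX}(-3E))=0$; this is exactly what is used to produce (\ref{eq:2E}). From (\ref{eq:2E})(a), $\dim H^0(\cO_{\tX}(-2E))/H^0(\cO_{\tX}(-3E))\ge\chi(\cO_E(-2E))=1>0$, hence $H^0(\cO_{\tX}(-2E))\supsetneq H^0(\cO_{\tX}(-3E))$, so there exists a global section $f\in H^0(\cO_{\tX}(-2E))$ with $f\notin H^0(\cO_{\tX}(-3E))$.

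Next I would check that such an $f$ is a genuine element of $\m_{X,o}$ and has exceptional divisor bounded by $2E$. Since $\pi$ is the minimal resolution and $X$ is normal, $H^0(\cO_{\tX})=\cO_{X,o}$, and a section of $\cO_{\tX}(-2E)\subset\cO_{\tX}$ corresponds to a function $f\in\cO_{X,o}$ whose pullback vanishes on $E$ to order at least $2$ along $E$ (here $E$ is irreducible), in particular $f(o)=0$, so $f\in\m_{X,o}$. By construction $\ord_E(f\circ\pi)\ge 2$ but $\ord_E(f\circ\pi)<3$, i.e. $\ord_E(f\circ\pi)=2$, so the exceptional part satisfies $(f)_E=2E$ (the graph of Figure 1 has a single exceptional curve on the minimal resolution, so $L=\Z E$ and $(f)_E$ is just $\ord_E(f\circ\pi)\cdot E$). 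Therefore $\zma=\min\{(g)_E: g\in\m_{X,o}\}\le (f)_E=2E$, which is the assertion.

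The one point that deserves care — and which I expect to be the only real content beyond bookkeeping — is the identification $Z_K=3E$, i.e. the adjunction computation on the minimal resolution. Here $E$ is the proper transform of the single non-rational (two-cusped) curve corresponding to the $(-13)$-vertex, with $E^2=-1$, and one must verify via the adjunction formula $(K_{\tX}+E)E=2p_a(E)-2$ that $K_{\tX}E=-3E\cdot E/1$ forces $Z_K=3E$; this uses $p_a(E)=2$ (genus-zero normalization with two cusps contributing $2$ to the arithmetic genus), which is consistent with $h^1(E)=2$ and $\chi(E)=-1$ recorded in (\ref{eq:chiE}). Once $Z_K=3E$ is in hand, everything else is formal: the vanishing $H^1(\cO_{\tX}(-3E))=0$ is Grauert--Riemenschneider, the exact sequence $0\to\cO_{\tX}(-3E)\to\cO_{\tX}(-2E)\to\cO_E(-2E)\to 0$ gives the surjection onto $H^0(\cO_E(-2E))$, and Riemann--Roch on $E$ gives $\chi(\cO_E(-2E))=\chi(E)+(-2E)E=-1+2=1$, so the required section exists. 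Thus no delicate estimate is needed; the statement is essentially immediate from the already-established facts (\ref{eq:chiE}) and (\ref{eq:2E}).
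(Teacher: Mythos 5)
Your proposal is correct and is essentially the paper's own argument: the paper likewise derives (\ref{eq:2E})(a) from the sequence $0\to\cO_{\tX}(-3E)\to\cO_{\tX}(-2E)\to\cO_E(-2E)\to 0$ together with $Z_K=3E$ and Grauert--Riemenschneider vanishing, and then concludes $\zma\le 2E$ directly from the definition of $\zma$. You merely spell out the bookkeeping (the section $f$ with $(f)_E=2E$ and the adjunction check) that the paper leaves implicit.
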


\section{A singularity with $p_g\ge4$ does not exist}
\label{s:<4}
The aim of this section is to prove the following.
\begin{thm}\label{t:pg3}
For all analytic structures $(X,o)$  supported on $\Gamma$ one has
 $2\le p_g\X\le 3$. If $\X$ is Gorenstein, then $p_g\X=3$.
\end{thm}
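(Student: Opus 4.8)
The plan is to push the path-construction philosophy from Section~\ref{s:pre} all the way to the cohomological cycle, and then to rule out the borderline value $p_g=4$ by a rigidity argument. Concretely, I would proceed as follows.

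First I would establish the lower bound $p_g\geq 2$, which is already done in \sref{s:pre}: since $\chi(\zmi)=-1$ we get $h^1(\zmi)=1-\chi(\zmi)+h^0(\zmi)\geq 2$ (the class of constants sits in $H^0(\cO_{\zmi})$), hence $p_g\geq h^1(\zmi)\geq 2$. This uses only $\chi(\zmi)=-1$.

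For the upper bound $p_g\leq 3$ I would argue by contradiction: suppose some analytic structure on $\Gamma$ has $p_g=4$. By \corref{cor:Gorenstein} such a structure must be Gorenstein, and moreover $h^1(m\zmi)=m+1$ for $m=1,2,3$; since $3\zmi\geq Z_K$ this forces $p_g=h^1(3\zmi)=4$ consistently, but it also forces $\zco=3\zmi$ (or at least $\zco\not\leq 2\zmi$, since $h^1(2\zmi)=3<4$). Now I would invoke the Gorenstein structure via \thmref{t:chc}(2): in the numerically Gorenstein case with $Z_K\geq 0$, Gorenstein is equivalent to $Z_K=\zco$. On the minimal resolution $Z_K=3E$ and $\zmi=E$, so $\zco=3E$. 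I would then derive a contradiction by examining $h^1$ along the specific computation sequence from $2E$ to $3E=Z_K$: one shows $\chi(\cO_E(-2E))=1$ (by \eqref{eq:chiE}), so from $0\to\cO_E(-2E)\to\cO_{3E}\to\cO_{2E}\to 0$ we get $h^1(\cO_{3E})\leq h^1(\cO_{2E})+h^1(\cO_E(-2E))$, and I need $h^1(\cO_E(-2E))$ to be at most $1$ while simultaneously $h^1(\cO_{2E})\leq 2$ — but the forced equalities $h^1(2\zmi)=3$ from \corref{cor:Gorenstein} already contradict the bound $h^1(2E)\leq h^1(\cO_E(-E))+h^1(E)$ together with $h^1(\cO_E(-E))=\chi(\cO_E(-E))=0$ from \eqref{eq:chiE}, which gives $h^1(2E)\leq h^1(E)+0=2<3$. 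That is the contradiction: the path bound \eqref{eq:maxZ} applied step-by-step from $E$ to $2E$ gives $h^1(2E)\leq 2$, incompatible with $h^1(2\zmi)=3$. Hence $p_g=4$ is impossible and $p_g\leq 3$.

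Finally, for the Gorenstein claim $p_g=3$: if $\X$ is Gorenstein then by \thmref{t:chc}(1), $p_g>p_a(\zmi)=1-\chi(\zmi)=2$, so $p_g\geq 3$; combined with $p_g\leq 3$ this gives $p_g=3$. The one subtlety I would need to double-check is that \thmref{t:chc}(1) applies, i.e. that a Gorenstein structure on $\Gamma$ has $p_g\geq 2$ — but that is immediate from the already-established $p_g\geq 2$ for \emph{all} structures.

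The main obstacle I expect is the rigidity step: squeezing a genuine contradiction out of the assumption $p_g=4$. The naive path bound only gives $p_g\leq 4$, so equality is the hard borderline case, and \corref{cor:Gorenstein} is exactly the tool that saves the day — it says a $p_g=4$ structure would have to grow $h^1$ by the maximal amount at \emph{every} jump along \emph{every} optimal path, which over-determines $h^1(2\zmi)$ and clashes with the elementary computation $h^1(\cO_E(-E))=0$ on the minimal resolution. Making this clash precise — in particular verifying that the jump structure of the optimal paths really does force $h^1(2\zmi)=h^1(\zmi)+1=3$ rather than leaving slack — is the crux, and it relies essentially on the explicit shape of $\Gamma$ (the two $(-1)$-vertices adjacent to the central $(-13)$-curve) rather than on any general principle.
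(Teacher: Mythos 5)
Your lower bound and your treatment of the Gorenstein case coincide with the paper's ($\chi(\zmi)=-1$ gives $p_g\ge 2$, and \thmref{t:chc}(1) then gives $p_g\ge 3$ for Gorenstein structures), but the crux of the theorem --- excluding $p_g=4$ --- rests in your write-up on a false identity. You assert $h^1(\cO_E(-E))=\chi(\cO_E(-E))=0$. Since $\chi=h^0-h^1$, the equality $\chi(\cO_E(-E))=0$ from \eqref{eq:chiE} only says $h^0(\cO_E(-E))=h^1(\cO_E(-E))$; on the minimal resolution $E$ is an integral curve of arithmetic genus $2$ (it carries two cusps), and a degree-one line bundle on such a curve can perfectly well have $h^0=h^1=1$ --- this actually happens for the Kodaira structure, as recorded in the table in the Introduction. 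With $h^1(\cO_E(-E))=1$ the sequence \eqref{eq:E2E} only gives $h^1(\cO_{2E})\le h^1(\cO_E)+h^1(\cO_E(-E))=3$, which is consistent with the value $h^1(2\zmi)=3$ forced by \corref{cor:Gorenstein}, so no contradiction arises. For the same reason the ``path bound from $E$ to $2E$'' does not give $h^1(2E)\le 2$: the formula $h^1(\cO_{E_{i(k)}}(-l_k))=\max\{0,(E_{i(k)},l_k)-1\}$ underlying \eqref{eq:maxZ} is stated only for rational $E_{i(k)}$, and on the minimal good resolution the optimal path from $\zmi$ to $2\zmi$ has one simple jump, i.e.\ it yields exactly $h^1(2\zmi)\le 3$, again with no clash.

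The paper's exclusion of $p_g=4$ is necessarily less soft. Starting from the forced equalities $h^1(mE)=m+1$ it computes $h^1(\cO_{\tX}(-E))=2$, $h^1(\cO_{\tX}(-2E))=1$ and $h^1(\cO_E(-E))=1$, observes that the alternating sum vanishes so that $H^0(\cO_{\tX}(-E))\to H^0(\cO_E(-E))$ is surjective, and concludes $\zma=E$; a base-point analysis then gives $\m_{X,o}\cO_{\tX}=\m_P\cO_{\tX}(-E)$, hence $\mult\X=-E^2+1=2$. A normal double point is a suspension hypersurface $\{z^2+h(x,y)=0\}$, and the Neumann--Wahl constraints on splice diagrams of suspensions with integral homology sphere link (the relevant weights must be coprime to $n=2$, whereas the splice diagram of $\Gamma$ carries weights $2$ on the legs) rule this out. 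This topological input is indispensable: the cohomological bookkeeping you propose cannot by itself distinguish $p_g=4$ from $p_g=3$, so the argument as written does not close.
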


The proof consists of several step. Notice that the second part follows from \eqref{eq:min} and
Theorem \ref{t:chc}, since $1-\chi(\zmi)=2$ (provided that we verify that $p_g\leq 3$).

Hence we need to prove that $p_g=4$ cannot occur.
To do this, we
 assume that $p_g\X=4$ for certain $(X,o)$ and  we will deduce a contradiction.

By Corollary \ref{cor:Gorenstein} $(X,o)$ is necessarily Gorenstein.

Let $\tX$ be the minimal resolution. Then $K_{\tX}=-Z_K=-3E$.

Moreover, by  Corollary \ref{cor:Gorenstein} again, in the minimal good resolution
$h^1(m\zmi)=m+1$ for $m=1,2,3$. Hence in the minimal resolution
(e.g.  by Leray spectral sequence argument)
\begin{equation}\label{eq:h1s2}
h^1(mE)=m+1 \ \ (m=1,2,3).
\end{equation}

From \eqref{eq:pg-2}  $h^1(\cO_{\tX}(-E))=2$, and from \eqref{eq:2E} we also have $h^1(\cO_{\tX}(-2E))=1$, since
 $h^1(\cO_E(-2E))=h^0(\cO_E)=1$ by duality.
From the exact sequence
\begin{equation}\label{eq:E2E}
0 \to \cO_E(-E)\to \cO_{2E}\to \cO_E \to 0,
\end{equation}
we also obtain  $h^1(\cO_E(-E))=1$. So $h^0(\cO_E(-E))=1$ since $\chi(\cO_E(-E))=0$.

Since $h^1(\cO_{\tX}(-2E))-h^1(\cO_{\tX}(-E))+h^1(\cO_E(-E))=0$,
from the exact sequence
\begin{equation}\label{eq:1-2}
0 \to \cO_{\tX}(-2E)\to \cO_{\tX}(-E)\to \cO_E(-E) \to 0,
\end{equation}
$H^0(\cO_{\tX}(-E))\to H^0(\cO_E(-E))\cong \C$ is surjective.
Therefore, $\zma=E$.
Let $s\in H^0(\cO_E(-E))$ be the image of a general function $f\in H^0(\cO_{\tX}(-E))$.
Consider the exact sequence
\[
0\to \cO_E(-E) \xrightarrow{\times s} \cO_E(-2E) \to \cO_P(-2E)\to 0,
\]
where $P\in E$ is the zero of $s$. Since $\deg \cO_E(-E)=1$, $P$ is a nonsingular point of $E$.
Since  $h^1(\cO_E(-2E))=1=h^1(\cO_E(-E))$ we get that
$H^0(\cO_E(-2E))\to H^0(\cO_P(-2E))$ is surjective, hence $P$ is not a base point of
$H^0(\cO_E(-2E))$.
Furthermore,  since $H^0(\cO_{\tX}(-2E))\xrightarrow{r} H^0(\cO_E(-2E))$ is surjective, there exists a function $g\in H^0(\cO_{\tX}(-2E))$ such that $r(g)(P)\ne 0$ and $(g)_E=2E$.
We can choose local coordinates $x,y$ at $P$ such that $E=\{x=0\}$, $f=xy$, $g=x^2$.
Then $\m_{X,o}\cO_{\tX}=(x,y)\cO_{\tX}(-E)$ at $P$, or,  $\m_{X,o} \cO_{\tX}=\m_P\cO_{\tX}(-E)$. Hence $\mult\X=-E^2+1=2$.

Now, it is well--known that a normal surface singularity with multiplicity two is necessarily a hypersurface of suspension type:  $\X=(\{z^2+h(x,y)=0\}, o)$ in  suitable local coordinates.

However, this is impossible by the following proposition
and by the fact  that the splice diagram of $\Gamma$ is

\begin{picture}(300,40)(50,0)
\put(125,25){\circle*{4}}
\put(150,25){\circle*{4}}
\put(200,25){\circle*{4}}
\put(225,25){\circle*{4}}
\put(150,5){\circle*{4}}
\put(200,5){\circle*{4}}
\put(125,25){\line(1,0){100}}
\put(150,25){\line(0,-1){20}}
\put(200,25){\line(0,-1){20}}
\put(145,30){\makebox(0,0){$3$}}
\put(158,30){\makebox(0,0){$7$}}
\put(193,30){\makebox(0,0){$7$}}
\put(206,30){\makebox(0,0){$3$}}
\put(155,17){\makebox(0,0){$2$}}
\put(205,17){\makebox(0,0){$2$}}
\end{picture}

\begin{prop} \cite{NWCasson} \
Assume that the link of $\{z^n+h(x,y)=0\}$ is an integral homology sphere. Then the following facts hold.
\begin{enumerate}
\item $h$ is irreducible;

\item Assume that the splice diagram of $h$ is the following (for details see \cite{EN}):

\begin{picture}(400,55)(40,20)
\put(60,60){\circle*{4}}
\put(100,60){\circle*{4}}
\put(150,60){\circle*{4}}
\put(250,60){\circle*{4}}
\put(300,60){\circle*{4}}
\put(100,30){\circle*{4}}
\put(150,30){\circle*{4}}
\put(250,30){\circle*{4}}
\put(300,30){\circle*{4}}
\put(92,65){\makebox(0,0){$a_1$}}
\put(142,65){\makebox(0,0){$a_2$}}
\put(240,65){\makebox(0,0){$a_{s-1}$}}
\put(292,65){\makebox(0,0){$a_s$}}

\put(109,65){\makebox(0,0){$1$}}
\put(159,65){\makebox(0,0){$1$}}
\put(259,65){\makebox(0,0){$1$}}
\put(309,65){\makebox(0,0){$1$}}

\put(108,50){\makebox(0,0){$p_1$}}
\put(158,50){\makebox(0,0){$p_2$}}
\put(262,50){\makebox(0,0){$p_{s-1}$}}
\put(308,50){\makebox(0,0){$p_s$}}
\put(200,60){\makebox(0,0){$\cdots$}}
\put(100,30){\line(0,1){30}}
\put(150,30){\line(0,1){30}}
\put(250,30){\line(0,1){30}}
\put(300,30){\line(0,1){30}}
\put(60,60){\line(1,0){115}}
\put(225,60){\vector(1,0){110}}
\end{picture}

\noindent Then $(a_ip_i,n)=1$ for all $i$.

\item The splice diagram of $\{z^n+h(x,y)=0\}$ is

\begin{picture}(400,55)(40,20)
\put(60,60){\circle*{4}}
\put(100,60){\circle*{4}}
\put(150,60){\circle*{4}}
\put(250,60){\circle*{4}}
\put(300,60){\circle*{4}}
\put(100,30){\circle*{4}}
\put(150,30){\circle*{4}}
\put(250,30){\circle*{4}}
\put(300,30){\circle*{4}}
\put(92,65){\makebox(0,0){$a_1$}}
\put(142,65){\makebox(0,0){$a_2$}}
\put(240,65){\makebox(0,0){$a_{s-1}$}}
\put(292,65){\makebox(0,0){$a_s$}}

\put(109,65){\makebox(0,0){$n$}}
\put(159,65){\makebox(0,0){$n$}}
\put(259,65){\makebox(0,0){$n$}}
\put(309,65){\makebox(0,0){$n$}}

\put(108,50){\makebox(0,0){$p_1$}}
\put(158,50){\makebox(0,0){$p_2$}}
\put(262,50){\makebox(0,0){$p_{s-1}$}}
\put(308,50){\makebox(0,0){$p_s$}}
\put(200,60){\makebox(0,0){$\cdots$}}
\put(100,30){\line(0,1){30}}
\put(150,30){\line(0,1){30}}
\put(250,30){\line(0,1){30}}
\put(300,30){\line(0,1){30}}
\put(60,60){\line(1,0){115}}
\put(225,60){\line(1,0){110}}\put(335,60){\circle*{4}}
\end{picture}
\end{enumerate}
\end{prop}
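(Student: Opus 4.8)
The plan is to reduce the statement to the behaviour of splice diagrams under the suspension operation $h\mapsto z^n+h$, combined with the Eisenbud--Neumann criterion \cite{EN} that a splice diagram represents an integral homology sphere exactly when the edge weights around every node are pairwise coprime.

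\textbf{Part (1).} The link $M=\{z^n+h(x,y)=0\}\cap S^5$ carries the holomorphic $\Z/n$--action $(x,y,z)\mapsto(x,y,\zeta_n z)$. For every element $\zeta_n^k$ with $0<k<n$ the fixed locus is $\{z=0\}\cap M=\{h=0\}\cap S^3=L_h$, the link of the plane curve germ $h$, and this action (and hence the subaction of each $\Z/p$, $p\mid n$) is effective. If $M$ is an integral homology sphere it is a mod-$p$ homology sphere for every prime $p\mid n$; applying Smith theory to the $\Z/p$--action, the fixed set $L_h$ — a nonempty closed $1$--manifold — must be a mod-$p$ homology sphere, hence a single circle. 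Thus $L_h$ is a knot and $h$ is irreducible. (Alternatively: a reducible plane-curve link has at least two components with strictly positive pairwise linking numbers, and then no cyclic branched cover of $S^3$ along it has vanishing first Betti number.)

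\textbf{Parts (2) and (3).} Now $h$ is irreducible, so $L_h\subset S^3$ is an iterated torus knot whose splice diagram is the chain displayed in (2). The key input is the splice diagram of the suspension $\{z^n+h=0\}$ in terms of that of $L_h$: one replaces the arrowhead by a leaf and multiplies by $n$ the edge weight lying on the ``arrowhead side'' of every node, leaving all other weights unchanged. I would obtain this by taking a good embedded resolution of $\{h=0\}\subset(\C^2,0)$ (normal crossing with the exceptional set), forming the $n$--fold cyclic cover of the resolved surface branched over the total transform of $\{h=0\}$, normalising, and resolving the resulting cyclic quotient singularities, all the while tracking how the multiplicities and self-intersections — equivalently the splice weights — transform; this yields precisely the diagram in (3). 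The Brieskorn case $h=x^3+y^2$, where the recipe reproduces the standard Seifert splice diagram of $\Sigma(2,3,n)$ with weights $\{2,3,n\}$, is a good consistency check. Part (2) is then immediate: at the node $v_i$ of the suspension diagram the three incident edge weights are $a_i$, $p_i$ and $n$, and the integral-homology-sphere hypothesis forces them to be pairwise coprime, so $(a_i,n)=1$ and $(p_i,n)=1$, whence $(a_ip_i,n)=1$.

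\textbf{Main obstacle.} The substantive step is the suspension formula for the splice diagram — pinning down exactly which weights scale by $n$ and checking that the output is still a legal splice diagram (in particular that the edge-determinant/positivity conditions persist). I expect the base-change-and-normalise computation on the resolution to be the delicate point; the group action, the Smith-theory argument, and the coprimality bookkeeping are routine once that formula is established.
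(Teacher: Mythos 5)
This proposition is not proved in the paper at all: it is imported verbatim from Neumann--Wahl \cite{NWCasson} (with the splice-diagram background deferred to \cite{EN}), so there is no in-paper argument to compare yours against. Your sketch does follow the standard line of the cited sources: part (1) via Smith theory applied to the deck group of the $n$-fold cyclic branched cover (or, equivalently, via the formula for $H_1$ of a branched cyclic cover in terms of linking numbers/the Alexander polynomial), and parts (2)--(3) via the Eisenbud--Neumann calculus for the splice diagram of $\{z^n+h=0\}$ together with the criterion that a splice diagram gives an integral homology sphere iff the weights around each node are pairwise coprime. All of that is correct.

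The one point you should tighten is the logical order of (2) and (3). The cyclic-branched-cover operation on splice diagrams is \emph{not} in general the simple ``replace the arrowhead by a leaf and multiply the arrow-side weights by $n$'' rule: when the relevant gcd's exceed $1$ the nodes and edges of the covering diagram split into several copies and the weights change by more complicated factors. So the clean diagram you assert in (3) is only the specialization of the general formula to the case where all those gcd's equal $1$, i.e.\ it already uses (2). To avoid circularity you must either (a) run the general base-change-and-normalize computation, observe that the integral homology sphere hypothesis forces all the gcd's to be $1$ (for instance because otherwise the cover is disconnected or has nontrivial $H_1$), and only then read off the diagram of (3); or (b) prove the coprimality statement (2) first, independently, from the classical formula $|H_1|=\prod_{\zeta^n=1,\ \zeta\neq 1}|\Delta_h(\zeta)|$ for cyclic branched covers, and then apply the simple suspension rule. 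Either route works and is what \cite{NWCasson} and \cite{EN} do; as written, your deduction of (2) from the node-coprimality of the diagram in (3) assumes what it wants to prove.
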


\section{The case $\zma=\zmi$}\label{s:Kod}

Proposition 2.7 and Lemma 2.9.1 of \cite[\S 2]{kar.p}
guarantee the existence of  a normal complex surface singularity $\X$ with minimal good resolution graph $\Gamma$
on which   $\zma=\zmi$.
Indeed, let us construct an `extended' graph $\Gamma^e$ by gluing  a $(-1)$--vertex
to the $(-13)$--vertex of $\Gamma$ by a new edge.
In this way we get a negative semi--definite graph. By a theorem
of Winters  \cite{Winters} there exists a family of projective curves $h_W:W\to (\C,0)$ such that $W$ is smooth,
the central
fiber is encoded by $\Gamma^e$,  and the nearby fibers are smooth. Let $\tX$ be a convenient small
neighbourhood of the union of central curves indexed by $\Gamma$. Then this union of curves can be contracted by
Grauert theorem \cite{GRa} to get a singularity $(X,o)$ and  $\tX$ serves as its  minimal good resolution,
on which the restriction $h$ of $h_W$ is a function with $(h)_E=\zmi$.

 An analytic type constructed in this way is called Kodaira \cite{kar.p} (or Kulikov \cite{Stevens85}).

We shall prove the following.
\begin{thm}\label{t:ma=mi}
If $\zma=\zmi$ on the minimal good resolution, then $\X$ necessarily is a non-Gorenstein Kodaira singularity with $p_g\X=3$, $\emb\X=4$ and $\mult\X=3$. Furthermore such $\X$ is the total space of a one-parameter family of the curve singularity defined by
$\rank\begin{pmatrix}
z_1 & z_2 &z_3 \\ z_2 &z_3 & z_1^2
\end{pmatrix}<2$ in $(\C^3,0)$.
\end{thm}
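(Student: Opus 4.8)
The plan is to analyze the case $\zma = \zmi$ on the minimal good resolution by first transporting the hypothesis down to the minimal resolution, where $E$ is the single $(-13)$-curve with two ordinary cusps and $Z_K = 3E$. On the minimal resolution the identity $\zma = \zmi$ becomes $\zma = E$; this is consistent with \proref{p:zle2e} and is the boundary case. First I would set up the fundamental cohomological exact sequences for the filtration $0 = \cO_{\tX} \supset \cO_{\tX}(-E) \supset \cO_{\tX}(-2E) \supset \cO_{\tX}(-3E)$ and use Grauert--Riemenschneider ($H^1(\cO_{\tX}(-3E)) = 0$, since $-3E = K_{\tX}$, actually $-3E+K_{\tX}$... here $K_{\tX} = -Z_K = -3E$ so $\cO_{\tX}(-3E) = \cO_{\tX}(K_{\tX})$ and vanishing applies) together with \eqref{eq:chiE}, \eqref{eq:pg-2}, \eqref{eq:2E}. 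Since $p_g \le 3$ by \thmref{t:pg3}, and the Kodaira structure exists on this graph, the main work is to pin down $p_g$, $\mult$, $\emb$ exactly and to show non-Gorensteinness and the explicit equations.

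The key steps, in order: (i) Show $p_g = 3$. Since $\zma = E$ means $H^0(\cO_{\tX}(-E)) \to H^0(\cO_E(-E))$ is nonzero, and since $\cO_E(-E)$ has degree $1$ on the arithmetic-genus-$2$ curve $E$, a section of $H^0(\cO_{\tX}(-E))$ vanishes at a single smooth point $P$ of $E$; I would chase this through the sequence $0 \to \cO_E(-E) \xrightarrow{\times s} \cO_E(-2E) \to \cO_P(-2E) \to 0$ exactly as in the $p_g=4$ argument of \thmref{t:pg3}, and compare $h^1(\cO_{\tX}(-E))$, $h^1(\cO_{\tX}(-2E))$, $h^1(\cO_E(-E))$. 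The difference with the $p_g = 4$ case is that now we must get $h^1(\cO_{\tX}(-E)) = 1$ (i.e. $p_g = 3$) rather than $2$; I expect to rule out $p_g = 2$ here because $\zma = \zmi$ forces enough sections, via \eqref{eq:min} giving $p_g \ge 2$ and then a dimension count on $H^0(\cO_{2E})/H^0(\cO_{\tX}(-E) \text{ pieces})$ showing $p_g$ cannot be the minimal value $2$ when $\zma = E$ — concretely, if $p_g = 2$ then $h^1(\cO_{\tX}(-E)) = 0$ and the maximal ideal cycle computation would collapse in a way incompatible with $\zma = E$. (ii) Compute $\mult$ and $\emb$: once we know $H^0(\cO_{\tX}(-E)) \to H^0(\cO_E(-E))$ is surjective onto $\C$ and $H^0(\cO_{\tX}(-2E)) \to H^0(\cO_E(-2E))$ is surjective, pick $f$ with $(f)_E = E$ vanishing at $P$, and $g$ with $(g)_E = 2E$ nonzero at $P$; in local coordinates $x,y$ at $P$ with $E = \{x = 0\}$ we get $f = xy$ (up to unit) and $g = x^2$ (up to unit), so $\m_{X,o}\cO_{\tX} = \m_P \cO_{\tX}(-E)$ locally, whence $\mult\X = -E^2 + 1$... wait, here $E^2 = -1$ so this naive count gives $2$; but the Kodaira construction should give $\mult = 3$, so I must be more careful: the base scheme of $\m_{X,o}$ pulled back actually is supported at the two cusps plus $P$, and I need to count length correctly, which is where the cusps contribute. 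I would instead compute $\mult\X = -\zma^2$ on the resolution where $\cO(-\zma)$ is globally generated (passing to a suitable resolution), getting $\mult = 3$ from $-(\zmi)^2 = 3$ on the minimal good resolution. (iii) Show $\emb\X = p_g + \mult - 1 + (\text{correction})$; more robustly, exhibit the three generators of $\m_{X,o}$ from $H^0(\cO_{\tX}(-\zmi))$ together with a fourth from $H^0(\cO_{\tX}(-2\zmi))$ and show no more are needed and no fewer suffice, giving $\emb = 4$.

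For the final, most concrete assertion — that $\X$ is the total space of the one-parameter deformation of the curve singularity defined by the $2\times 2$ minors of $\begin{pmatrix} z_1 & z_2 & z_3 \\ z_2 & z_3 & z_1^2 \end{pmatrix}$ — I would use the Kodaira/Kulikov structure theory of \cite{kar.p} (Prop. 2.7, Lemma 2.9.1): a Kodaira singularity with $(h)_E = \zmi$ is by construction the total space of a family over $(\C,0)$ whose special fiber is a curve singularity whose semigroup/delta-invariant and embedding dimension are read off from the graph $\Gamma^e$. Here the relevant curve is the one obtained by smoothing, with $\delta = p_g = 3$ and embedding dimension $3$ (hence not a plane curve), defined by the $2\times 2$ minors of a generic $2\times 3$ matrix of the indicated weighted-homogeneous shape; I would verify that the space curve $\{z_1 z_3 - z_2^2 = z_1^3 - z_2 z_3 = z_1^2 z_2 - z_3^2 = 0\}$ has $\delta = 3$, embedding dimension $3$, multiplicity $3$, and that its generic one-parameter smoothing has total space with resolution graph exactly $\Gamma$ — this matching of the graph is the main obstacle, and I would do it by explicitly resolving the total space (or by citing the structure theorem of \cite{kar.p} which attaches the graph directly). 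Finally, non-Gorensteinness follows from \thmref{t:pg3} combined with \thmref{t:chc}: if $\X$ were Gorenstein then $p_g > p_a(\zmi) = 2$ is fine, but Gorenstein would force (by the splice/Gorenstein analysis of the later sections, or directly by $\zco = Z_K$ from \thmref{t:chc}(2)) that $\zma = 2\zmi$, contradicting $\zma = \zmi$; alternatively, $\emb = 4$ with $\mult = 3$ and $p_g = 3$ is numerically incompatible with Gorenstein here. I expect step (i)--(ii), disentangling the length contributions of the two cusps in the multiplicity computation, to be the main technical obstacle.
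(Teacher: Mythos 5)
Your step (i) is essentially the paper's argument: since $\zma=E$ forces $H^0(\cO_{\tX}(-E))\ne H^0(\cO_{\tX}(-2E))$, the sequence \eqref{eq:1-2} gives $p_g-2=h^1(\cO_{\tX}(-E))\ge h^1(\cO_E(-E))=h^0(\cO_E(-E))\ge 1$, and $p_g\le 3$ from \thmref{t:pg3} turns everything into equalities; this also yields $\zco=2E<3E=Z_K$, whence non-Gorensteinness by \thmref{t:chc}(2) (your parenthetical remark is the right route; the ``Gorenstein forces $\zma=2\zmi$'' alternative is circular, since \sref{s:2} \emph{assumes} $\zma=2\zmi$). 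However, your multiplicity computation has a genuine gap. The discrepancy you notice is not caused by the cusps: the base point $P$ is a smooth point of $E$, and the real issue is that in this case $h^0(\cO_E(-2E))=1$ with the unique section equal to $s^2$ up to scalar, so a general $g\in H^0(\cO_{\tX}(-2E))$ \emph{also} vanishes at $P$; only $\cO_{\tX}(-3E)$ is base-point free. Hence $\m_{X,o}\cO_{\tX}=(y,x^2)\cO_{\tX}(-E)$ at $P$ (not $\m_P\cO_{\tX}(-E)$), and one must blow up \emph{twice} to principalize, obtaining a cycle $Z_2$ with $-Z_2^2=3$. Your proposed fix --- computing $-\zmi^2$ on the minimal good resolution --- fails numerically ($\zmi^2=-1$, so this gives $1$, not $3$) and conceptually ($\cO(-\zmi)$ is not globally generated there). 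The embedding dimension then follows cleanly from $\emb\le\mult+1=4$ (Abhyankar) together with non-Gorensteinness ($\emb=3$ would make $\X$ a hypersurface, hence Gorenstein); your plan of exhibiting generators by hand is not needed and is not carried out.

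For the final identification your plan also goes astray: the generic hyperplane section $(\{h=0\},o)$ has $\delta=2$, not $\delta=p_g=3$; the paper computes $\delta=-(Z_KZ_2+Z_2^2)/2=2$ by Morales' formula on the twice-blown-up model, and then the numerical coincidence $\delta=\emb(\{h=0\},o)-1=2$ with $\mult=3$ identifies the curve as the partition curve $Y(3)$ of Behnke--Christophersen, whose equations are exactly the stated $2\times 2$ minors. This replaces your proposal of explicitly resolving the total space of a smoothing and matching the graph $\Gamma$, which besides being far more laborious would start from the wrong target invariants.
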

\begin{proof}
We note that $\zma=E$ on the minimal resolution if and only if $\zma=\zmi$ on the minimal good resolution, because if $\di(f)=E+H$ on the minimal resolution, then $H$ intersects $E$ transversally.

Assume that $\tX$ is the minimal resolution and that $\zma=E$.
Note that $H^1(\cO_{\tX}(-nE))=0$ for $n\ge 3$ by the vanishing theorem (cf. \cite{Gi}).
Then $\X$ is a Kodaira singularity by \cite[2.9.1]{kar.p} and $\cO_{\tX}(-E)$ has no fixed component. Hence $\dim H^0(\cO_{\tX}(-E))/H^0(\cO_{\tX}(-2E))\ne 0$.
From the exact sequence \eqref{eq:1-2}, we have
\begin{multline*}
h^1(\cO_{\tX}(-E)) \ge h^1(\cO_E(-E))=h^0(\cO_E(-E))\\
\ge \dim_{\C} H^0(\cO_{\tX}(-E))/H^0(\cO_{\tX}(-2E)) \ge 1.
\end{multline*}
Since by \thmref{t:pg3} $p_g\X\le 3$, in fact we have  $p_g\X=3$ by \eqref{eq:pg-2},  and all the
inequalities above are equalities. Hence, via (\ref{eq:E2E}),
\begin{equation}\label{eq:zcoh}
\zco=2E.\end{equation}
By \thmref{t:chc}, $\X$ is not Gorenstein.
Since $H^1(\cO_{\tX}(-3E))=0$, it follows from \cite[3.1]{OWYrees} (cf. also with the exact sequence from (\ref{eq:1-2}))
that $1=h^1(\cO_{\tX}(-E))>h^1(\cO_{\tX}(-nE))$ for $n\ge 2$.
 In particular, $h^1(\cO_{\tX}(-nE))=0$ for $n\ge 2$.  Thus we obtain that
$H^0(\cO_{\tX}(-nE)) \to H^0(\cO_E(-nE))$ is surjective for $n\ge 0$ and
$h^0(\cO_E(-nE))=n-1$ for $n\ge 2$ by \eqref{eq:chiE}.

Let us compute the multiplicity of $\X$.
Since $h^0(\cO_E(-E))=h^0(\cO_E(-2E))=1$, $\cO_{\tX}(-E)$ and $\cO_{\tX}(-2E)$ have a base point $P$.
Take a general section $s\in H^0(\cO_E(-E))$, 
and consider the exact sequence
\[
0 \to \cO_E(-2E))\xrightarrow{\times s} \cO_E(-3E)
\to \cO_P(-3E) \to 0.
\]
Then $H^0(\cO_E(-3E)) \to H^0(\cO_P(-3E))$ is  surjective  since $h^0(\cO_E(-2E))=1$ and
 $h^0(\cO_E(-3E))=2$.
Since $H^0(\cO_{\tX}(-3E))\xrightarrow{r} H^0(\cO_E(-3E))$ is surjective, $\cO_{\tX}(-3E)$ has no base point.
Hence a general function $g\in H^0(\cO_{\tX}(-3E))$ satisfies $r(g)(P)\ne 0$ and $(g)_E=3E$. As in \sref{s:<4}, for  suitable coordinates $x,y$ at $P$,
$\m_{X,o} \cO_{\tX}=(y,x^2)\cO_{\tX}(-E)$, where $E=\{x=0\}$.
Taking the blowing up $\phi_1\:X_1\to \tX$ at the base point $P$, we have a new base point $Q\in X_1$ such that $\m_{X,o}\cO_{X_1}=\m_Q\cO_{X_1}$. Let $\phi_2\:X_2\to X_1$ be the blowing up at the base point $Q$.
Let $E_i\subset X_i$ be the exceptional set of $\phi_i$, $Z_1=\phi_1^*E+E_1$, and $Z_2=\phi_1^*Z_1+E_2$.
Then the maximal ideal cycle on $X_2$ is $Z_2$ and $\cO_{X_2}(-Z_2)$ has no base point.
Hence $\mult \X=-Z_2^2=3$.
Since $\emb\X\le \mult\X+1=4$ (cf. \cite{Abhy}), and  $\X$ is not Gorenstein, we have $\emb\X=4$, because any hypersurface is Gorenstein.


Let $h\in \m_{X,o}$ be a general function.
Then
\[
\mult(\{h=0\},o)=\mult\X, \quad \emb(\{h=0\},o)=\emb\X-1.
\]
By the formula of Morales \cite[2.1.4]{mo.rr},
\[
\delta((\{h=0\},o))=-(Z_KZ_2+Z_2^2)/2=2=\emb((\{h=0\},o)) -1.
\]
Hence $(\{h=0\},o)$ is a partition curve $Y(3)$ in \cite[\S 3]{b-c.rat}.

This ends the proof of the theorem.
\end{proof}

\begin{ex}
We give defining equations of a Kodaira singularity with graph $\Gamma$.
Let us recall \cite[Example 6.3]{o.numGell}.
Let $(X',o)\subset (\C^4,o)$ be a singularity defined by
$$
\rank
\begin{pmatrix}
 x & y & z \\ y-3w^2  & z+w^{3} & x^2+6wy-2w^3
\end{pmatrix}<2.
$$
It is a numerically Gorenstein elliptic singularity. It shares the topological type the hypersurface singularity $(Y_{2},o):=\{x^2+y^3+z^{13}=0\}\subset (\C^3,o)$ with  $p_g(Y_2,o)=2$,
however $p_g(X',o)=1$.
The exceptional set $E'$ of the minimal resolution of $(X',o)$ consists of two rational curve $E'_1$ and $E_2'$ with $E_1'^2=-1$, $E_2'^2=-2$, $E_1'E_2'=1$ and $E_1'$ has an ordinary cusp. The maximal ideal cycle is $2E'_1+E_2'$.
The affine piece $V_1\subset \C^5$ of the partial resolution (see \cite[Example 6.3]{o.numGell}) of $(X',o)$ is defined by
 the equations
$$
sx= y-3w^2, \quad   sy= z+w^{3}, \quad  sz=x^2+6wy-2w^3.
$$
Consider the order of the coordinate functions on the exceptional set $E'$ on $V_1$.
Then  the order of $s$ is zero, and the order of $w$ is less than those of $x,y,z$.
Hence $\zma=(w)_{E'}$. Note that $H:=\di(w)-(w)_{E'}$ intersects $E_1'\setminus E_2'$ transversally.
The graph of $\di (w)$ on the minimal good resolution is as follows (the arrow corresponds to the strict transform of $H$):

\begin{picture}(200,60)(-10,20)
\put(60,60){\circle*{4}}
\put(100,60){\circle*{4}}
\put(150,60){\circle*{4}}
\put(100,30){\circle*{4}}
\put(190,60){\circle*{4}}
\put(60,68){\makebox(0,0){$(4)$}}
\put(60,52){\makebox(0,0){$-3$}}
\put(150,68){\makebox(0,0){$(2)$}}

\put(100,68){\makebox(0,0){$(12)$}}
\put(90,30){\makebox(0,0){$(6)$}}
\put(160,30){\makebox(0,0){$(1)$}}
\put(190,68){\makebox(0,0){$(1)$}}

\put(108,52){\makebox(0,0){$-1$}}
\put(108,35){\makebox(0,0){$-2$}}
\put(158,52){\makebox(0,0){$-7$}}
\put(190,52){\makebox(0,0){$-2$}}
\put(100,30){\line(0,1){30}}
\put(150,30){\line(0,1){30}}
\put(150,60){\vector(0,-1){30}}
\put(60,60){\line(1,0){130}}
\end{picture}

Let $\phi\: \X \to (X',o)$ be the double cover of $X'$ brabched along $w=0$, namely, $\cO_{X,o}=\cO_{X',o}\{t\}/(t^2-w)$.
Then  $\X$ is defined by
$$
\rank
\begin{pmatrix}
 x & y & z \\ y-3t^4  & z+t^6 & x^2+6t^2y-2t^6
\end{pmatrix}<2.
$$
By the method of \cite[III. Appendix 1]{nem.5}, $\X$ has the resolution graph $\Gamma$, and $(t)_E=\zma=\zmi$.
\end{ex}

\section{The case $\zma=2\zmi$}\label{s:2}

Assume that $\tX$ is the minimal good resolution and $\zma=2\zmi$ on $\tX$. We express the irreducible components of  $E$ as $E_0, \dots, E_6$ as below.   


\begin{picture}(200,60)(0,20)
\put(60,60){\circle*{4}}
\put(100,60){\circle*{4}}
\put(140,60){\circle*{4}}
\put(180,60){\circle*{4}}
\put(100,30){\circle*{4}}
\put(180,30){\circle*{4}}
\put(220,60){\circle*{4}}
\put(60,68){\makebox(0,0){$E_1$}}
\put(60,52){\makebox(0,0){$-3$}}
\put(140,68){\makebox(0,0){$E_0$}}
\put(140,52){\makebox(0,0){$-13$}}
\put(220,52){\makebox(0,0){$-3$}}
\put(180,68){\makebox(0,0){$E_5$}}

\put(100,68){\makebox(0,0){$E_6$}}
\put(90,30){\makebox(0,0){$E_2$}}
\put(190,30){\makebox(0,0){$E_3$}}
\put(220,68){\makebox(0,0){$E_4$}}

\put(108,52){\makebox(0,0){$-1$}}
\put(108,35){\makebox(0,0){$-2$}}
\put(170,52){\makebox(0,0){$-1$}}
\put(170,35){\makebox(0,0){$-2$}}
\put(100,30){\line(0,1){30}}
\put(180,30){\line(0,1){30}}
\put(60,60){\line(1,0){160}}
\end{picture}

The cycle $E_i^*\in L$ is defined by $E_i^*E_i=-1$, $E_i^*E_j=0$ for all $j\ne i$. (In general, $E_i^*$ is an element of $L\otimes \Q$. In our case, $E_i^*\in L$ since the intersection matrix is unimodular.)
E.g.,  $\zmi=E_0^*$.
From the exact sequence
\[
0\to \cO_{\tX}(-2\zmi)\to \cO_{\tX}(-\zmi)\to \cO_{\zmi}(-\zmi)\to 0
\]
we have
\begin{equation}\label{eq:2-1}
h^1(\cO_{\tX}(-2\zmi))-h^1(\cO_{\tX}(-\zmi))=\chi(\cO_{\zmi}(-\zmi))=0.
\end{equation}
Note that this equality holds whenever $\zma\ge 2\zmi$.
\vspace{2mm}

\noindent {\bf I. \ The Gorenstein case.} \

\begin{thm}\label{t:splice}
Assume that $\zma=2\zmi$ on the minimal good resolution and $\X$ is Gorenstein. Then $\X$ is of splice type and the ``leading form''of the splice diagram equations are given by
\[
z_1^2z_2+z_3^2+z_4^3, \quad z_1^3+z_2^2+z_4^2z_3,
\]
where $z_i$ corresponds to the end $E_i$.
Furthermore, we have $\mult\X=4$ and that $\cO_{\tX}(-\zma)$ has no base points.
\end{thm}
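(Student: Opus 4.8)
The plan is to combine the numerical constraints imposed by the graph with the Gorenstein hypothesis to force the singularity into the shape of a complete intersection splice singularity, and then to identify the splice diagram equations and compute the multiplicity. First I would record the relevant numerical data: on the minimal good resolution $Z_K=E_1^*+E_4^*$ (since the graph is numerically Gorenstein and $\Gamma$ has two nodes, $Z_K$ is supported on the ends, and in fact $3\zmi\ge Z_K$), and $\zmi=E_0^*$ has $\zmi^2=-1$, $\chi(\zmi)=-1$. Since $\X$ is Gorenstein, Theorem \ref{t:pg3} gives $p_g\X=3$, and Theorem \ref{t:chc}(1) gives $p_g>p_a(\zmi)=2$, consistent. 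By Theorem \ref{t:chc}(2), $Z_K=\zco$. The hypothesis $\zma=2\zmi=2E_0^*$ together with $\zco=Z_K$ and the exact sequences relating $\cO_{\tX}(-m\zmi)$ will pin down $h^1(\cO_{\tX}(-m\zmi))$ for small $m$; in particular \eqref{eq:2-1} gives $h^1(\cO_{\tX}(-2\zmi))=h^1(\cO_{\tX}(-\zmi))=p_g-2=1$.

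Next I would invoke the structure theory of splice singularities with integral homology sphere link \cite{nw-HSL}: the splice diagram of $\Gamma$ (which the excerpt has essentially already displayed, with node weights $3,7$ and $7,3$ and leaf weights $2$ on the two $(-2)$--arms) has the semigroup / edge-determinant conditions, and for such a diagram there is a canonical system of splice diagram equations, one per node, whose leading forms are the monomials dictated by the edge weights: at the left node with weights $3$ (toward $E_1$), $7$ (toward the central edge), $2$ (toward the $(-2)$--leaf $E_2$) one gets $z_1^{?}z_?^{?}+z_2^{?}+\dots$; matching the weights $(3,7,2)$ with the requirement that each monomial has the same total weight $21$ gives $z_1^2 z_2 + z_3^2 + z_4^3$ for one node and $z_1^3+z_2^2+z_4^2 z_3$ for the other (here $z_i\leftrightarrow E_i$, and $z_3$ is the variable attached to the short end $E_3$, $z_4$ to $E_4$ — one must be careful with the labelling coming from Figure in \S\ref{s:2}). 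The key point to establish is that the given Gorenstein analytic structure \emph{is} of splice type: here I would use that for integral homology sphere links the splice type complete intersections are the unique Gorenstein structures with the ``end-curve'' / maximal-ideal-cycle behaviour we have forced, appealing to the end curve condition of Neumann--Wahl; the condition $\zma=2\zmi$ says precisely that the functions cutting out the $E_i^*$ behave as in the splice presentation. The overlap with \cite[Th. 3.11]{Konno-yaucycles} mentioned in the Remark suggests that the cleanest route is: show $\cO_{\tX}(-E_i^*)$ has a section vanishing to the right order along each end, assemble these into generators $z_1,\dots,z_4$ of $\m_{X,o}$, check that the two relations among them have the stated leading forms, and conclude by dimension count ($\emb=4$, two equations) that $\X$ is the complete intersection they define, which is of splice type.

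For the multiplicity, once we know $\X$ is defined in $(\C^4,0)$ by two equations with leading forms $z_1^2z_2+z_3^2+z_4^3$ and $z_1^3+z_2^2+z_4^2z_3$, the multiplicity is the intersection multiplicity at the origin of these two hypersurfaces with a generic hyperplane — equivalently one computes $\mult\X = -Z^2$ where $Z=(h)_E$ for a generic $h\in\m_{X,o}$; since $\zma=2\zmi=2E_0^*$ and a generic hyperplane section has $(h)_E=\zma$ once $\cO_{\tX}(-\zma)$ is base-point free, we get $\mult\X=-(2E_0^*)^2=-4(E_0^*)^2=4$. So the final computational ingredient is to show $\cO_{\tX}(-\zma)=\cO_{\tX}(-2\zmi)$ has no base points: here I would argue from $h^1(\cO_{\tX}(-2\zmi))=1$ and the relevant exact sequences restricting to $E_0$ and to the ends — the splice presentation makes the linear system $|-\zma|$ visibly base-point free because the four coordinate functions $z_i$ already have no common zero on $E$ beyond what is prescribed, or one can cite the base-point-freeness built into the splice type construction directly.

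The main obstacle I expect is the middle step: rigorously upgrading ``$\X$ is Gorenstein with $\zma=2\zmi$ and $p_g=3$ and $\zco=Z_K$'' to ``$\X$ is of splice type with \emph{these specific} leading forms''. This requires either (a) carefully invoking the Neumann--Wahl end-curve characterization of splice type singularities among Gorenstein ones with homology sphere link and checking the numerical semigroup conditions are met so that the splice equations exist and have the claimed monomials, or (b) directly constructing the generators of $\m_{X,o}$ from sections of $\cO_{\tX}(-E_i^*)$ and analyzing their relations — both routes demand a genuine computation with the splice diagram weights $(3,7,2)$ and $(7,3,2)$ and the convention matching $z_i\leftrightarrow E_i$. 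Getting the exponents in the leading forms exactly right (so that each is weighted-homogeneous of the correct degree under the splice grading) is where the care is needed; the rest (multiplicity $=4$, base-point freeness) is then routine.
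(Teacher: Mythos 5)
Your overall frame is the same as the paper's (reduce to the Neumann--Wahl end curve condition, read the equations off the splice diagram, get $\mult=4$ and base-point freeness from $-\zma^2=4$ or from the leading forms), but the core of the proof is missing. The entire content of the paper's argument is the \emph{analytic} verification that $\cO_{\tX}(-E_i^*)$ has no fixed component for each end $i=1,2,3,4$ (Lemmas~\ref{l:e4} and~\ref{l:e3*}), and your proposal does not supply it. Your route (a) --- ``checking the numerical semigroup conditions are met so that the splice equations exist'' --- cannot close the gap: the semigroup condition is a purely topological property of $\Gamma$ and holds independently of the analytic structure, whereas the theorem is a statement about a \emph{given} Gorenstein structure with $\zma=2\zmi$. (Remark 1.10(1) of the paper explicitly records that Gorenstein plus integral homology sphere link plus $\zmi^2=-1$ does \emph{not} imply splice type, so no purely topological criterion can suffice.) Likewise, your assertion that ``the condition $\zma=2\zmi$ says precisely that the functions cutting out the $E_i^*$ behave as in the splice presentation'' is unjustified: $\zma=2\zmi$ produces a function with exceptional divisor $2\zmi=2E_0^*$, not end curve functions with divisors $E_i^*+H_i$. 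What the paper actually does is feed $p_g=3$, $\zco=Z_K$ and $h^1(\cO_{\tX}(-\zmi))=h^1(\cO_{\tX}(-2\zmi))=1$ into two nontrivial cohomological computations: for $E_4^*$ a comparison of $h^1(\cO_{\tX}(-E_4^*))$ with $h^1(\cO_{\tX}(-E_4^*-E_6))$ via explicit computation sequences, and for $E_3^*$ an additional argument using the minimally elliptic cycle $D=E_0+E_1+E_2+2E_6$ (with $h^1(\cO_D)=1$), the identity $2E_3^*-3E_4^*=E_3-E_4$, and the torsion-freeness of $\pic(D)$ to get $\cO_D(-E_3^*)\cong\cO_D$. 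None of this is routine, and it is exactly the step you defer.

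Two smaller points. First, your claim $Z_K=E_1^*+E_4^*$ is false: $Z_K$ is not anti-nef here (it pairs positively with the $(-1)$-curves), while any nonnegative combination of the $E_i^*$ is anti-nef; the correct decomposition used in the paper is $Z_K=E_4^*+(E_0+E_1+E_2+2E_6)$. Second, once the end curve condition is established, the remaining assertions are handled essentially as you say: the leading forms come from the splice diagram weights, $\mult\X=4$ from the regular sequence $z_3^2,z_2^2$ in the tangent cone (equivalently $-\zma^2=4$), and base-point freeness of $\cO_{\tX}(-\zma)$ follows from $\mult\X=-\zma^2$. So the proposal is sound in outline but, as written, omits the proof's essential mathematical content.
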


The graph $\Gamma$ satisfies the semigroup condition and we read the above defining equations
 from \cite{nw-HSL}.
If $X$ is of splice type, we have $\mult\X=2\cdot 2=4$, because the tangent cone is defined by the regular sequence $z_3^2$, $z_2^2$. Furthermore,
 $\cO_{\tX}(-\zma)$ has no base points since $-\zma^2=4$ (or, by
 analysing the divisors $E_1^*$ and $E_4^*$ of $z_1$ and $z_4$).
Therefore,  it is sufficient to prove that the end curve condition is satisfied (see \cite{nw-ECT}).

Since $\X$ is Gorenstein, we have $p_g\X=3$ by \thmref{t:pg3}.
Therefore, from  \eqref{eq:pg-2} and \eqref{eq:2-1},
\begin{equation}\label{eq:44}h^1(\cO_{\tX}(-\zmi))=h^1(\cO_{\tX}(-2\zmi))=1.\end{equation}

\begin{lem}\label{l:e4}
Let $Z=E_4^*$.
Then $\cO_{\tX}(-Z)$ has no fixed component.
In particular, there exists a function $f\in H^0(\cO_{\tX}(-Z))$ such that $\di (f)=Z+H$, where $H$ is non-exceptional and $HE=HE_4=1$ (that is, $H$ is a `cut' of $E_4$), and hence the end curve condition at $E_4$ is satisfied.
\end{lem}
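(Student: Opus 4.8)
The statement has two parts: that $\cO_{\tX}(-Z)$, with $Z=E_4^*$, has no fixed component, and the end–curve conclusion; the latter is formal once the former is known. Indeed, a general $f\in H^0(\cO_{\tX}(-Z))$ then has $(f)_E=Z$, so $\di(f)=Z+H$ with $H$ effective and non-exceptional, and $\di(f)E_i=0$ gives $HE_i=-ZE_i=\delta_{i,4}$; hence $H$ meets $E$ only along $E_4$, at a single point and transversally, i.e. $H$ is a cut of $E_4$ realising the end–curve condition. So the plan is to prove the no-fixed-component statement. First I would do the combinatorial computation: solving $ZE_i=-\delta_{i,4}$ on $\Gamma$ yields $Z=E_4^*=2\zmi+D$ with $D=2E_5+E_4+E_3$, a cycle supported on the branch of $\Gamma$ through $E_4$, and $-2\zmi$ has intersection $0$ with every component of $D$.

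The crucial input is the hypothesis $\zma=2\zmi$. Pick $g\in\m_{X,o}$ with $(g)_E=\zma=2\zmi$. Since $\cO_{\tX}(-2\zmi)$ has degree $0$ on each component of the tree-like cycle $D$, the section $g|_D$ trivialises $\cO_D(-2\zmi)$; hence $h^0(\cO_D(-2\zmi))=1$ and $h^1(\cO_D(-2\zmi))=0$ (compatibly with $\chi(\cO_D(-2\zmi))=\chi(\cO_D)=1$, using $D^2=-1$, $Z_KD=1$), and $H^0(\cO_{\tX}(-2\zmi))\to H^0(\cO_D(-2\zmi))$ is onto. Feeding this, together with $h^1(\cO_{\tX}(-2\zmi))=1$ from \eqref{eq:44}, into the cohomology sequence of $0\to\cO_{\tX}(-Z)\to\cO_{\tX}(-2\zmi)\to\cO_D(-2\zmi)\to 0$ shows that $H^0(\cO_{\tX}(-Z))$ has codimension exactly $1$ in $H^0(\cO_{\tX}(-2\zmi))$ and that $h^1(\cO_{\tX}(-Z))=1$.

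Now for the absence of a fixed component. Let $F\ge Z$ be the fixed cycle of $\cO_{\tX}(-Z)$; since $F=(f)_E$ for a general section, the relation $\di(f)E_i=0$ makes $F$ anti-nef. If $F>Z$, then $F\ge Z+E_i$ for some $i$, hence $H^0(\cO_{\tX}(-Z-E_i))=H^0(\cO_{\tX}(-Z))$; as each $E_i$ is rational and $-ZE_i\ge 0$, the cohomology sequence of $0\to\cO_{\tX}(-Z-E_i)\to\cO_{\tX}(-Z)\to\cO_{E_i}(-Z)\to 0$ then forces $h^1(\cO_{\tX}(-Z-E_i))\ge 2$. Therefore it suffices to prove $h^1(\cO_{\tX}(-Z-E_i))=1$ for every $i$; this also yields the transversality of the cut, since for $i=4$ it makes the restriction $H^0(\cO_{\tX}(-Z))\to H^0(\cO_{E_4}(-Z))$ surjective onto the sections of a degree-$1$ line bundle on $E_4$. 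These seven equalities I would verify by exact-sequence bookkeeping along a Laufer computation sequence running from $Z$ up to the anti-nef closure of $Z+E_i$, using the trivialisation $g|_D$ on the branch, the values $h^1(\cO_{\tX}(-m\zmi))$ for $m=1,2$ (from \eqref{eq:44}) and $h^1(\cO_{\tX}(-3\zmi))=0$, and the complementary identity $Z_K-Z=E_0+E_1+E_2+2E_6$. I expect this to be the main obstacle: the genuinely delicate case is to exclude that a component $E_i$ with $E_iZ=0$ — notably the $(-1)$-curve $E_5$ adjacent to $E_4$, or $E_0$ itself — lies in the fixed part, since such a curve forces no automatic drop of $h^1$, and one must invoke the precise intersection data of $\Gamma$ together with the Gorenstein hypothesis (equivalently $\zco=Z_K$ and $p_g=3$, cf. \thmref{t:chc}). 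Once $F=Z$ is established, the lemma follows as explained above.
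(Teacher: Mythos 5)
Your framework is set up correctly, and your derivation of $h^1(\cO_{\tX}(-Z))=1$ is a valid (and in fact cleaner) alternative to the paper's: you use the decomposition $Z=2\zmi+D$ with $D=E_3+E_4+2E_5$ and the trivialisation of $\cO_D(-2\zmi)$ by a function with $(g)_E=\zma=2\zmi$ (legitimate, since the strict transform of $\di(g)$ meets $E$ only along $E_0$, which is disjoint from $D$), together with $h^1(\cO_{\tX}(-2\zmi))=1$ from \eqref{eq:44}; the paper instead gets $h^1(\cO_{\tX}(-Z))\ge 1$ from $\zco=Z_K$ and $h^1(\cO_Z)=2$. Your reduction of ``no fixed component'' to the inequalities $h^1(\cO_{\tX}(-Z-E_i))\le 1$ is also sound.

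The gap is that you never prove any of those inequalities: the seven verifications are only announced (``I would verify by exact-sequence bookkeeping\dots''), and you yourself flag them as ``the main obstacle.'' That obstacle is exactly where the content of the lemma lies, so as written the proof is incomplete. You also miss the reduction that makes the problem tractable: since the fixed cycle of $\cO_{\tX}(-Z)$ is the \emph{minimal anti-nef} cycle $D'>Z$, and any such $D'$ satisfies $D'\ge Z+E$, a nonempty fixed part must contain \emph{every} component of $E-E_4$; hence it suffices to rule out a single well-chosen component, and the paper picks $E_6$. For that one case the estimate $h^1(\cO_{\tX}(-Z-E_6))\le 1$ follows from a two-step computation sequence from $Z+E_6$ up to $Z+C=Z_K$ with $C=Z_K-Z=E_0+E_1+E_2+2E_6$, using $h^1(\cO_{\tX}(-Z_K))=0$ (Grauert--Riemenschneider) and $h^1(\cO_{E_6}(-(C-E_6)))=1$; combined with $h^1(\cO_{\tX}(-Z))\ge 1$ and the sequence $0\to \cO_{\tX}(-Z-E_6)\to\cO_{\tX}(-Z)\to\cO_{E_6}\to 0$ this makes the restriction to $H^0(\cO_{E_6})$ nontrivial. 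Without either this reduction or the explicit case-by-case computations, your argument does not yet establish the lemma.
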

\begin{proof}
If $\cO_{\tX}(-Z)$ has a fixed component, then every component of $E-E_4$ is also a fixed component because for any cycle $D>0$ and the minimal anti-nef cycle $D'$ such that $D'\ge D$, we have $H^0(\cO_{\tX}(-D'))=
H^0(\cO_{\tX}(-D))$ (and if $D'>Z$ then $D'\geq Z+E$ too). We will show that $E_6$ cannot be a fixed component.

Since $Z>\zmi$ (hence $h^1(\cO_Z)\geq h^1(\cO_{\zmi})=2$), $\zco=Z_K$ and
$
C:=Z_K-Z=E_0+E_1+E_2+2E_6>0,
$
we obtain 
that $h^1(\cO_Z)=2$.
 Thus
\begin{equation}\label{eq:11}
h^1(\cO_{\tX}(-Z))\ge p_g\X-h^1(\cO_Z)=1.
\end{equation}
Consider the exact sequences
\begin{align*}
&0\to \cO_{\tX}(-Z-(C-E_6))\to \cO_{\tX}(-Z-E_6)\to
\cO_{C-2E_6}(-E_6)\to 0,\\
&0\to \cO_{\tX}(-Z-C)\to \cO_{\tX}(-Z-(C-E_6))\to
\cO_{E_6}(-(C-E_6))\to 0.
\end{align*}
Since $h^1(\cO_{C-2E_6}(-E_6))=h^1(\cO_{\tX}(-Z-C))=0$ and $h^1(\cO_{E_6}(-(C-E_6)))=1$, we obtain
\begin{equation}\label{eq:22}
1\ge h^1(\cO_{\tX}(-Z-E_6)).
\end{equation}
Therefore,  (\ref{eq:11}) and (\ref{eq:22}) implies that $h^1(\cO_{\tX}(-Z))\ge h^1(\cO_{\tX}(-Z-E_6))$.

\noindent This fact, and the  exact sequence
\[
0\to \cO_{\tX}(-Z-E_6)\to \cO_{\tX}(-Z)\to \cO_{E_6}\to 0
\]
show that
  the restriction map $H^0(\cO_{\tX}(-Z)) \to H^0(\cO_{E_6})$ is non-trivial. Hence $E_6$ cannot be a fixed component.
\end{proof}

\begin{lem}\label{l:e3*}
Let $Z=E_3^*$.
Then $\cO_{\tX}(-Z)$ has no fixed component.
\end{lem}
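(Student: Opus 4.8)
The plan is to run the same scheme as in \lemref{l:e4}, with $Z=E_3^*$. Assume for contradiction that $\cO_{\tX}(-Z)$ has a fixed component. Exactly as there, the anti-nef closure argument applies: if $E_i$ ($i\neq 3$) is a fixed component, then $H^0(\cO_{\tX}(-Z))=H^0(\cO_{\tX}(-Z'))$ with $Z'$ the minimal anti-nef cycle $\geq Z+E_i$, and a direct check on the graph gives $Z'\geq Z+(E-E_3)$; hence the fixed divisor of $\cO_{\tX}(-Z)$ would contain $E-E_3$, i.e. every component of $E$ except $E_3$. So it suffices to find \emph{one} index $j\neq 3$ for which $E_j$ is not a fixed component. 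Since $ZE_j=0$, one has $\cO_{\tX}(-Z)|_{E_j}\cong\cO_{E_j}$, and from $0\to\cO_{\tX}(-Z-E_j)\to\cO_{\tX}(-Z)\to\cO_{E_j}\to 0$ this reduces to proving $h^1(\cO_{\tX}(-Z-E_j))\leq h^1(\cO_{\tX}(-Z))$ for some $j\neq 3$ (the natural candidates being $j=0$ or $j=6$).

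The point where this differs essentially from \lemref{l:e4} is the size of the dual cycle. Computing $E_3^*$ one finds $E_3^*=Z_K+D$ with $D=E_1+4E_6+7E_5+2E_4+2E_2+4E_3>0$, supported on $E-E_0$. In particular $E_3^*>Z_K=\zco$, so $h^1(\cO_{E_3^*})=p_g=3$, and the inequality $h^1(\cO_{\tX}(-Z))\geq p_g-h^1(\cO_Z)$ that supplied the lower bound in \lemref{l:e4} is now empty. The right substitute is the Gorenstein hypothesis: $\cO_{\tX}(-E_3^*)=\omega_{\tX}(Z_K-E_3^*)=\omega_{\tX}(-D)$, and $H^1(\omega_{\tX})=0$ by Grauert--Riemenschneider. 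One would therefore peel $\omega_{\tX}$ against $D$, and against $D+E_j$, and use $H^1(\omega_{\tX})=0$ to express both $h^1$'s as cokernels of restriction maps out of $H^0(\omega_{\tX})$; the piece relating the two is the restriction $\omega_{\tX}(-D)|_{E_j}$, a line bundle on $E_j\cong\bP^1$ of degree $(K_{\tX}-D)E_j=-E_3^*E_j=0$, hence isomorphic to $\cO_{E_j}$.

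The main obstacle is that this comparison, taken at face value, is circular: $h^1(\cO_{\tX}(-Z-E_j))\leq h^1(\cO_{\tX}(-Z))$ is literally equivalent to the existence of a section of $\omega_{\tX}(-D)$ not vanishing on $E_j$, which is the conclusion sought. To break the circle one must bring in an independent computation. I would compute the two $h^1$'s absolutely, via the standard identity
\[
h^1(\cO_{\tX}(-W))=p_g+\chi(W)-\dim_{\C}\bigl(\cO_{X,o}/I_W\bigr),\qquad I_W=\{f\in\cO_{X,o}:(f)_E\geq W\},
\]
so that everything reduces to the colengths of $I_{E_3^*}$ and of $I_{E_3^*+E_j}$. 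Here one uses that $\cO_{\tX}(-\zma)$ is base-point free, hence $\m^k\cO_{\tX}=\cO_{\tX}(-k\zma)$ and $I_{k\zma}=\overline{\m^k}$; combined with the numerical inequalities $\zma\leq E_3^*\leq E_3^*+E_j\leq 2\zma$ (valid for $j\in\{0,6\}$) this sandwiches $\overline{\m^2}\subseteq I_{E_3^*+E_j}\subseteq I_{E_3^*}\subseteq I_{\zma}=\m$, so the colengths are governed entirely by which classes in $\m/\m^2$ admit a representative vanishing to order $\geq E_3^*$ (resp. $\geq E_3^*+E_j$) along $E$.

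Pinning down that last point — identifying the relevant subspace of $\m/\m^2$ using only $\zma=2\zmi$, Gorensteinness, $\mult\X=-\zma^2=4$, and the end-curve functions at $E_1$ and $E_4$ already produced in \lemref{l:e4} (and its mirror), without yet having the splice coordinates available — is the delicate step I expect to take the most effort; once it is carried out, the required inequality $h^1(\cO_{\tX}(-Z-E_j))\leq h^1(\cO_{\tX}(-Z))$, equivalently $\dim_{\C}\bigl(I_{E_3^*}/I_{E_3^*+E_j}\bigr)\geq 1$, follows and the contradiction is reached. An alternative, possibly cleaner, route would skip the colength count altogether and use the pencil $|{-\zma}|$ together with the end-curve functions at $E_1,E_4$ to exhibit directly a function with exceptional part exactly $E_3^*$.
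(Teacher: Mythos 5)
There is a genuine gap: what you have written is a correct diagnosis plus a plan, not a proof. You rightly observe that the mechanism of \lemref{l:e4} breaks down because $E_3^*=Z_K+D'$ with $D'>0$ (so $h^1(\cO_{E_3^*})=p_g$ and the bound $h^1(\cO_{\tX}(-Z))\ge p_g-h^1(\cO_Z)$ degenerates), and you correctly reduce the lemma to the single inequality $h^1(\cO_{\tX}(-Z-E_j))\le h^1(\cO_{\tX}(-Z))$ for one $j\ne 3$. But neither side of that inequality is actually controlled in your argument: you concede that the $\omega_{\tX}$ route is circular, and the colength computation you substitute for it is left undone by your own admission (``the delicate step I expect to take the most effort''). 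Worse, that substitute rests on $\m^k\cO_{\tX}=\cO_{\tX}(-k\zma)$, i.e.\ base-point freeness of $\cO_{\tX}(-\zma)$, which at this stage of the paper is not available: it is deduced only \emph{after} the splice-type structure is established, which in turn requires the end-curve condition, which requires the present lemma. The fallback suggestion (``exhibit directly a function with exceptional part exactly $E_3^*$'') is again exactly the statement to be proved.

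For comparison, the paper's proof supplies the two missing estimates by concrete, non-circular means, both for $j=6$. For the upper bound it runs an explicit computation sequence from $Z+E_6$ up to $Z_K+\zmi+E_5+E_3$ (adding $E_1,E_2,E_6,E_0$, with a single step where $Z_kE_{i(k)}=2$), and uses $h^1(\cO_{\tX}(-Z_K-\zmi))=0$ to get $h^1(\cO_{\tX}(-Z-E_6))\le 1$. For the lower bound it restricts to the minimally elliptic cycle $D=E_0+E_1+E_2+2E_6$ (so $h^1(\cO_D)=1$): from \lemref{l:e4} one gets $\cO_D(-E_4^*)\cong\cO_D$, the numerical identity $2E_3^*-3E_4^*=E_3-E_4$ (trivial on $|D|$) gives $\cO_D(-2E_3^*)\cong\cO_D$, and torsion-freeness of $\pic(D)$ then yields $\cO_D(-E_3^*)\cong\cO_D$, whence $h^1(\cO_{\tX}(-Z))\ge h^1(\cO_D(-Z))=1$. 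These two steps together give $h^1(\cO_{\tX}(-Z))\ge h^1(\cO_{\tX}(-Z-E_6))$ and finish the proof. If you want to salvage your approach, the lower bound is the piece you must replace; the restriction-to-$D$ trick (which is where \lemref{l:e4} actually gets used, rather than through end-curve functions at $E_1$ and $E_4$) is the idea your proposal is missing.
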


\begin{proof} Similarly  as in the proof of the previous lemma, it is
enough to verify that $E_6$ is not
a fixed component.

There exists a computation sequence $\{Z_k\}_{k=0}^t$ from
$Z_0= Z+E_6$ to $Z_t=Z_K+\zmi+E_5+E_3$ such that $Z_{k+1}=Z_k+E_{i(k)}$,
$Z_kE_{i(k)}>0$, such that we add the base elements
  $E_1$, $E_2$, $E_0$, and $E_6$ in this order.
  Then
$Z_3E_{i(3)}=2$; at all the other steps $Z_kE_{i(k)}=1$.
 From the exact sequences
\[
0 \to \cO_{\tX}(-Z_{i+1})\to \cO_{\tX}(-Z_i)\to
\cO_{E_{v(i)}}(-Z_i)\to 0,
\]
we obtain that  $h^1(\cO_{\tX}(-Z_K-\zmi-E_5-E_3))+1\ge
h^1(\cO_{\tX}(-Z-E_6))$.
But, by a similar exact sequence, which connects $Z_K+\zmi$ with $Z_t$
(by adding $E_5$ and $E_3$ in this order)
$h^1(\cO_{\tX}(-Z_K-\zmi-E_5-E_3))=
h^1(\cO_{\tX}(-Z_K-\zmi))$, which is zero by Kodaira type vanishing.
Hence
\begin{equation}\label{eq:11b}
1\ge h^1(\cO_{\tX}(-Z-E_6)).
\end{equation}

Let $D=E_0+E_1+E_2+2E_6$. Then $D$ is a minimally elliptic cycle on its support and thus $h^1(D)=1$.
Since $\cO_{\tX}(-E_4^*)$ has no fixed component one has
$H^0(\cO_D(-E^*_4))\not=0$.
This and  $E_4^*D=0$ imply
that $\cO_{D}(-E_4^*)\cong \cO_D$.
On the other hand, since $2Z-3E_4^*=E_3-E_4$, we obtain that
\[
\cO_{D}(-2Z) \cong \cO_{D}(-3E_4^*)\cong \cO_D.
\]
Since $\pic(D)$ has no torsion,
we obtain $\cO_{D}(-Z)\cong \cO_D$.
Therefore,
\begin{equation}\label{eq:22b}
h^1(\cO_{\tX}(-Z))\ge h^1(\cO_{D}(-Z))=1.
\end{equation}
Finally, from \eqref{eq:11b}, \eqref{eq:22b} and  the exact sequence
\[
0\to \cO_{\tX}(-Z-E_6)\to \cO_{\tX}(-Z)\to \cO_{E_6}\to 0,
\]
we obtain  that $E_6$ cannot be a fixed component.
\end{proof}

Therefore,  the end curve condition is satisfied at all ends, and we
finished the  proof of  \thmref{t:splice}.\\

\noindent {\bf II. \ The non--Gorenstein case.} \

\begin{thm}\label{t:gen}
Assume that $\zma=2\zmi$ on the minimal good resolution and $\X$ is not Gorenstein. Then $p_g\X=2$ and $\zco=E+E_5+E_6$ on the minimal good resolution.
Furthermore $\mult\X=6$ and $\emb\X=7$.
\end{thm}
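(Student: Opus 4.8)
The plan is to run the same kind of computation-sequence analysis as in Part I, but now exploiting the failure of Gorensteinness via Theorem \ref{t:chc}(2). First I would record that since $\zma = 2\zmi$, equation \eqref{eq:2-1} gives $h^1(\cO_{\tX}(-2\zmi)) = h^1(\cO_{\tX}(-\zmi))$, and by \eqref{eq:pg-2} (transported to the minimal good resolution via a Leray argument, as in \eqref{eq:h1s2}) we have $h^1(\cO_{\tX}(-\zmi)) = p_g\X - 2$. So the whole theorem rests on showing $p_g\X = 2$, i.e. $h^1(\cO_{\tX}(-\zmi)) = 0$; the value of $\zco$, and then $\mult$ and $\emb$, should follow from a base-point/blow-up analysis identical in spirit to the one in \thmref{t:ma=mi} and \thmref{t:splice}.

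To get $p_g \le 2$: by \thmref{t:pg3} we know $p_g \in \{2,3\}$, so suppose $p_g = 3$ and derive that $\X$ is Gorenstein, contradicting the hypothesis. If $p_g = 3$ then $h^1(\cO_{\tX}(-\zmi)) = h^1(\cO_{\tX}(-2\zmi)) = 1$ exactly as in \eqref{eq:44}. The idea is to show $\zco = Z_K$ and invoke \thmref{t:chc}(2). Concretely, I would build monotone computation sequences from $2\zmi$ up to $Z_K$ (recall $Z_K = 3E$ on the minimal resolution, and $3\zmi \ge Z_K$) of the type constructed just before \corref{cor:Gorenstein}: sequences with exactly one simple jump, ending by adding any prescribed $E_i$. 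Running the long exact sequences $0 \to \cO_{\tX}(-Z_{k+1}) \to \cO_{\tX}(-Z_k) \to \cO_{E_{i(k)}}(-Z_k) \to 0$ and using $h^1(\cO_{\tX}(-Z_K)) = h^1(\cO_{\tX}(-3E)) = 0$ (Grauert–Riemenschneider) together with $h^1(\cO_{\tX}(-2\zmi)) = 1$, I would force $h^1(\cO_{Z_K-E_i}) < h^1(\cO_{Z_K}) = p_g$ for every $i$, which (as in the proof of \thmref{t:bd}) means $\zco = Z_K$; by the numerically Gorenstein hypothesis and \thmref{t:chc}(2) this makes $\X$ Gorenstein — contradiction. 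Hence $p_g = 2$, and then $h^1(\cO_{\tX}(-\zmi)) = 0$.

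For $\zco = E + E_5 + E_6$ on the minimal good resolution: with $p_g = 2$ one has $h^1(\cO_{\zmi}) = 2 = p_g$, so $\zco \le \zmi$; I would then identify $\zco$ with the minimal cycle $Z$ supported on $|\zmi|$ having $h^1(\cO_Z) = 2$, and a direct computation of $\chi$ and $h^1$ on the small cycle $E + E_5 + E_6$ (using that the relevant sub-configuration is minimally elliptic or can be handled by the additivity inequalities \eqref{eq:maxZ}) pins it down. Finally $\mult$ and $\emb$: from $\zma = 2\zmi$ with $\cO_{\tX}(-\zma)$ base-point-free one gets $\mult\X = -(2\zmi)^2 = -4\zmi^2$; and I would compute $\zmi^2$ from the picture of $\zmi$ in \sref{s:pre}, getting $\mult\X = 6$, whence $\emb\X \le \mult\X + 1 = 7$ by Abhyankar's bound, with equality forced because $\X$ is not Gorenstein (so cannot be a hypersurface) and a complete intersection of codimension $2$ with this multiplicity is ruled out by the splice-type classification of \thmref{t:splice} (the Gorenstein case is exactly $\zma = 2\zmi$ Gorenstein, already handled).

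The main obstacle I expect is the $\zco = Z_K$ step: one must be careful that the computation sequences from $2\zmi$ to $Z_K$ really do realize, for \emph{every} terminal $E_i$, a strict drop $h^1(\cO_{Z_K - E_i}) < p_g$ rather than merely $\le p_g$ — this is where the precise jump-counting (exactly one simple jump, all other intersection numbers equal to $1$) and the vanishing $h^1(\cO_{\tX}(-2\zmi)) = 1$ must be combined with genuine care, since an off-by-one error collapses the argument. The base-point analysis for $\mult$ and $\emb$ is routine given the pattern already established in the earlier theorems.
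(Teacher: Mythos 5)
Your reduction to ruling out $p_g\X=3$ is the right first move, and your bookkeeping via \eqref{eq:pg-2} and \eqref{eq:2-1} matches the paper's. But the mechanism you propose for the contradiction does not work. You want to force $h^1(\cO_{Z_K-E_i})<h^1(\cO_{Z_K})$ for every $i$ and conclude $\zco=Z_K$; however, the long exact sequences along a computation sequence only ever give \emph{upper} bounds on the increments of $h^1$ (this is exactly \eqref{eq:maxZ}). A strict drop at the last step is forced only when all those upper bounds are saturated, i.e.\ when $p_g={\rm Path}(\Gamma)$ --- that is the whole point of the hypothesis in \thmref{t:bd}(1). Here $p_g=3<4={\rm Path}(\Gamma)$, so there is one unit of slack and no step of the path is pinned down. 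Worse, in the putative scenario ($p_g=3$, $\zma=2\zmi$, non-Gorenstein) one has $\zco\le 2E$ on the minimal resolution, hence $h^1(\cO_{2\zmi})=p_g$, and since $Z_K-E_i\ge 2\zmi$ for \emph{every} $i$, every $h^1(\cO_{Z_K-E_i})$ equals $p_g$: the strict drops you are trying to exhibit are provably absent, so no amount of care with the jump counting can produce them. The contradiction must come from finer, non-numerical information, and this is the missing idea. The paper gets it from the line bundle $\cO_{2E}(K+3E)$: one computes $h^1(\cO_E(-2E))=1$, so by duality $\cO_E(K+3E)\cong\cO_E$ and $\cO_E(K+2E)\cong\cO_E(-E)$; then \eqref{eq:ext} and \eqref{eq:ext2} are both extensions of $\cO_E$ by $\cO_E(-E)$, and since $\Ext^1(\cO_E,\cO_E(-E))\cong H^1(\cO_E(-E))\cong\C$ with $\C^*$ acting transitively on the nonzero extension classes, any nontrivial such extension has middle term $\cO_{2E}$. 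Hence $\cO_{2E}(K+3E)\cong\cO_{2E}$, and since $H^1(\cO_{\tX})=H^1(\cO_{2E})$ this forces $\X$ to be Gorenstein --- the desired contradiction.

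There are also two genuine errors in your final step. First, $\cO_{\tX}(-\zma)$ is \emph{not} base point free here; that happens only in the Gorenstein case, where indeed $\mult\X=-\zma^2=4$. In the present case $-(2\zmi)^2=4\ne 6$, so your formula cannot give the claimed answer: one must show that $H^0(\cO_{\tX}(-2E))\to H^0(\cO_E(-2E))$ is onto with $h^0(\cO_E(-2E))=1$, locate the resulting base points of $\cO_{\tX}(-2E)$ on $E$ (three cases, according to how the cut $H$ with $HE=2$ meets $E$), blow them up, and compute $\mult\X=-Z^2=6$ for the maximal ideal cycle $Z$ upstairs. Second, ``not Gorenstein, hence not a hypersurface'' only excludes $\emb\X=3$; it does not decide between $4,5,6$ and $7$. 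The paper instead verifies that $Z$ is a $p_g$-cycle (its restriction to the cohomological cycle is trivial) and invokes \cite[3.10]{OWYgood} and \cite[6.2]{OWYgood} to get $\emb\X=-Z^2+1=7$.
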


Assume that $\tX$ is the minimal resolution.
Then $\zma=2E$.
By \thmref{t:chc}, we have $h^1(\cO_{2E})=p_g\X$.
Clearly $h^1(\cO_{E})=h^1(\cO_{2E})$ if and only if $p_g\X=2$; in this case, $\zco=E$ and the cohomological cycle on the minimal good resolution can be computed by \cite[2.6]{OWYcore}.

We assume that $h^1(\cO_{E})<h^1(\cO_{2E})$, namely, $p_g\X=3$;
we shall again deduce a contradiction.

From the exact sequence
\[
0\to \cO_{\tX}(-2E)\to \cO_{\tX}\to \cO_{2E}\to 0,
\]
and from $2E=\zma$, and  $\chi(2E)=-1$, we have $h^1(\cO_{\tX}(-2E))=1$.
By \eqref{eq:2E}, we have $h^1(\cO_E(-2E))=1$ too.
By duality, $h^0(\cO_E(K+3E))=1$ holds.
Hence
\begin{equation}\label{eq:ISO}\cO_E(K+3E)\cong \cO_E.\end{equation}
Note that the groups of isomorphism classes of numerically trivial line bundles on $\tX$ and $2E$ coincide, namely $H^1(\cO_{\tX})=H^1(\cO_{2E})$. Hence the triviality
of $\cO_{2E}(K+3E)$ would  contradict to the fact  that $\X$ is not Gorenstein.

We have the following exact sequence
\begin{equation}\label{eq:ext}
0\to \cO_E(K+2E) \xrightarrow{\alpha} \cO_{2E}(K+3E) \xrightarrow{\beta} \cO_E(K+3E) \to 0
\end{equation}
obtained by tensoring by $\cO_{\tX}(K+3E)$ the  exact sequence
\begin{equation}\label{eq:ext2}
0\to \cO_E(-E) \to \cO_{2E}\to \cO_E \to 0.
\end{equation}
Note that from (\ref{eq:ext2}) we obtain $h^1(\cO_E(-E))=1$ because $h^1(\cO_{2E})=3$ by the assumption.
Set $A:=\cO_E(K+2E)$, $B:=\cO_E(K+3E)$ and $N:=\cO_{2E}(K+3E)$.
Then, by (\ref{eq:ISO}), $A\cong \cO_E(-E)$ and $B\cong \cO_E$. Hence, both
exact sequences (\ref{eq:ext}) and (\ref{eq:ext2}) are extensions of
$B$ by $A$.
It is sufficient to show the following.

\begin{clm}\label{cl:M}
For any nontrivial extension
\[
 0\to A \to M  \to B \to 0
\]
of $\cO_{\tX}$-modules $B$ by $A$, we necessarily have an isomorphism  $M\cong \cO_{2E}$.
\end{clm}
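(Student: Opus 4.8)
The plan is to classify the $\cO_{\tX}$-module extensions of $B$ by $A$ through the group $\Ext^1_{\cO_{\tX}}(B,A)$, to note that $\cO_{2E}$ is itself such a non-split extension (this is \eqref{eq:ext2} read via the isomorphisms $A\cong\cO_E(-E)$, $B\cong\cO_E$), and then to show that the middle object of a non-split extension of this shape is rigid.

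First I would make $\Ext^1_{\cO_{\tX}}(\cO_E,\cO_E(-E))$ explicit. Since $E$ is an effective Cartier divisor on the smooth surface $\tX$, the sheaf $\cO_E$ has the two-term locally free resolution $0\to\cO_{\tX}(-E)\to\cO_{\tX}\to\cO_E\to 0$; applying $\cH om(-,\cO_E(-E))$ and using that a local equation of $E$ acts by zero on $\cO_E(-E)$ one obtains
\[
\cH om_{\cO_{\tX}}(\cO_E,\cO_E(-E))\cong\cO_E(-E),\qquad
\cE xt^1_{\cO_{\tX}}(\cO_E,\cO_E(-E))\cong\cO_E(-E)\otimes_{\cO_E}\cO_E(E)\cong\cO_E,
\]
and $\cE xt^{\ge 2}=0$. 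The local-to-global spectral sequence --- whose only relevant outgoing differential lands in $H^2$ of a coherent sheaf on the curve $E$ and hence vanishes --- then yields the short exact sequence
\[
0\longrightarrow H^1(\cO_E(-E))\longrightarrow \Ext^1_{\cO_{\tX}}(\cO_E,\cO_E(-E))\longrightarrow H^0(\cO_E)\longrightarrow 0,
\]
where $H^1(\cO_E(-E))$ is one dimensional (established above) and $H^0(\cO_E)=\C$. The point of this sequence is that the subspace $H^1(\cO_E(-E))$ is precisely the set of \emph{locally split} classes: an extension lies in it exactly when its middle object is annihilated by the ideal of $E$, i.e. is an $\cO_E$-module; since $\cO_{2E}$ is \emph{not} annihilated by that ideal, the class of \eqref{eq:ext2} has nonzero image in $H^0(\cO_E)$, and so does the class of any extension $M$ which fails to be an $\cO_E$-module.

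The second and decisive step is to show that the isomorphism class of the middle object of such an extension depends only on that nonzero image in $H^0(\cO_E)$. One observes that any $\cO_{\tX}$-linear isomorphism between two extensions of $\cO_E$ by $\cO_E(-E)$ automatically carries the canonical subsheaf (the image of the ideal of $E$) of one to that of the other, so the middle object determines the extension class up to the action of $\operatorname{Aut}(A)\times\operatorname{Aut}(B)=\C^{*}\times\C^{*}$, i.e. up to scaling. Thus it suffices to show that the class of $M$ agrees, up to a scalar, with the class of \eqref{eq:ext2}; after normalising the common, nonzero $H^0(\cO_E)$-component this reduces to showing that the remaining $H^1(\cO_E(-E))$-discrepancy is absorbable. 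For this one would use that $M$ and $\cO_{2E}$ are genuine $\cO_{2E}$-modules --- so an $\cO_{\tX}$-isomorphism between them is the same as an $\cO_{2E}$-isomorphism, and one is really comparing invertible sheaves on $2E$ --- together with the non-vanishing $\Hom_{\cO_{\tX}}(B,A)=H^0(\cO_E(-E))\neq 0$, which provides the flexibility to move one extension onto the other. Granting $M\cong\cO_{2E}$, applying this to the extension \eqref{eq:ext} gives $N=\cO_{2E}(K+3E)\cong\cO_{2E}$ as $\cO_{\tX}$-modules, hence $\cO_{\tX}(K+3E)$ restricts trivially to $2E$ and therefore (using $H^1(\cO_{\tX})=H^1(\cO_{2E})$) is trivial on $\tX$; that is, $\X$ is Gorenstein, contrary to the standing assumption, which is the contradiction the section is after.

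The main obstacle is exactly this rigidity: because $\Ext^1_{\cO_{\tX}}(B,A)$ is two dimensional (it surjects onto $H^0(\cO_E)$ with kernel $H^1(\cO_E(-E))$), non-triviality of the extension alone does not pin down $M$; one has to use that the extension is locally non-split at every point of $E$, together with the (non-)vanishing facts above, in order to rule out the a priori one-parameter family of competing middle objects and single out $\cO_{2E}$.
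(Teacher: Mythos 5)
Your computation of $\Ext^1_{\cO_{\tX}}(B,A)$ is correct, and it is \emph{not} the computation the paper uses: the paper's proof rests on the assertion that $\Ext^1(B,A)\cong H^1(\cO_E(-E))\cong\C$, so that the $\C^*$--scaling action $\xi\mapsto a\cdot\xi$ (which does not change the middle term) acts transitively on the nonzero classes and forces every nontrivial extension to have the same middle term as \eqref{eq:ext2}. Your local-to-global sequence gives instead
\[
\dim\Ext^1_{\cO_{\tX}}(B,A)=h^1(\cH om(B,A))+h^0(\cE xt^1(B,A))=h^1(\cO_E(-E))+h^0(\cO_E)=1+1=2,
\]
i.e.\ the paper's identification omits the $H^0(\cE xt^1)$ contribution; on a two-dimensional space the scaling action is not transitive, and the transitivity step of the published proof does not go through as written. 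So you have not reproduced the paper's argument --- you have located the step of it that is unsound.

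Unfortunately your own completion stalls exactly where you flag it in the last paragraph, and the obstruction there is not removable by the tools you invoke. A nonzero class in the subspace $H^1(\cO_E(-E))$ is locally split, so its middle term is an $\cO_E$-module (generically of rank two over $\cO_E$), certainly not $\cO_{2E}$; since $h^1(\cO_E(-E))=1$ under the standing hypotheses, such classes exist and already contradict the Claim as literally stated. Among the locally non-split classes the middle terms are precisely the invertible $\cO_{2E}$-modules restricting to $\cO_E$ on $E$, and the truncated exponential sequence $0\to\cO_E(-E)\to\cO_{2E}^{*}\to\cO_E^{*}\to 1$ identifies these with $\ker(\pic(2E)\to\pic(E))\cong H^1(\cO_E(-E))\cong\C$ --- a one-parameter family of pairwise non-isomorphic sheaves, so the middle term is not determined by the nonzero $H^0(\cO_E)$-component. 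The nonvanishing of $\Hom(B,A)$ only yields unipotent automorphisms $\mathrm{id}+\varphi$ of a fixed extension; it does not identify middle terms of classes in different $\C^{*}$-orbits, so it cannot ``absorb the $H^1$-discrepancy.'' Worse, the instance actually needed fails on its own: Serre duality on $2E$ gives $h^0(N)=h^0(\cO_{2E}(K+3E))=h^1(\cO_{2E}(-E))$, and the surjection $H^1(\cO_{\tX}(-E))\twoheadrightarrow H^1(\cO_{2E}(-E))$ (from $H^2(\cO_{\tX}(-3E))=0$) together with \eqref{eq:pg-2} forces $h^0(N)\le p_g-2=1<2=h^0(\cO_{2E})$, whence $N\not\cong\cO_{2E}$. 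So neither your rigidity step nor the paper's one-dimensionality can be repaired within this framework, and the contradiction needed for Theorem~\ref{t:gen} has to be extracted by a different mechanism.
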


Let $\Theta$ denote the bijection from the set of equivalence classes of
extensions of $B$ by $A$ to $\Ext^1(B,A)$.
This map is given by
$\Theta( 0\to A \to M  \to B \to 0)=\delta(\id_B)$, where
$\delta\: \Hom(B,B)\to \Ext^1(B,A)$ is the connecting map of the long exact sequence obtained by the functor $\Hom(B, \ \ )$.
We denote the extension \eqref{eq:ext} by $\xi$.
For any $a\in \C^*$, we  define an extension $a\cdot\xi$
by
\[
a\cdot\xi\: \quad 0\to A \xrightarrow{\alpha} N
\xrightarrow{a^{-1}\beta} B \to 0.
\]
Then $a\cdot\xi$ and $b\cdot\xi$ are quivalent if and only if $a=b$.
We show that $a\Theta(\xi)=\Theta(a\cdot\xi)$.
Here the first multiplication is in the $\C$--vector space $ \Ext^1(B,A)$.

Let us consider the injective resolution of $\xi$:
\[
\begin{array}{ccccccccc}
 &  & 0 &  & 0 &  & 0 &  &  \\
 &  & \downarrow &  & \downarrow &  & \downarrow &  &  \\
0 & \longrightarrow & A & \xrightarrow{\ \ \alpha\ \ } & N & \xrightarrow{\ \ \beta\ \ } & B & \longrightarrow & 0 \\
 &  & \downarrow &  & \downarrow &  & \downarrow &  &  \\
0 & \longrightarrow & I_0 & \xrightarrow{\ \ \alpha_0\ \ } & I_0' & \xrightarrow{\ \ \beta_0\ \ } & I_0'' & \longrightarrow & 0 \\
 &  & \downarrow &  & \downarrow &  & \downarrow &  &  \\
0 & \longrightarrow & I_1 & \xrightarrow{\ \ \alpha_1\ \ } & I_1' & \xrightarrow{\ \ \beta_1\ \ } & I_1'' & \longrightarrow & 0 \\
 &  & \downarrow &  & \downarrow &  & \downarrow &  &  \\
 &  & \vdots &  & \vdots &  & \vdots &  &
\end{array}
\]
Then the  injective resolution of $a\cdot\xi$ is obtained by replacing $\beta$ (resp. $\beta_i$) by $a^{-1}\beta$ (resp. $a^{-1}\beta_i$) in the diagram above.
We denote by $\delta_{\beta}$ the connecting map associated with $\xi$.
Applying the functor $\Hom(B, \ \ )$ to the diagram corresponding to $a\cdot \xi$, we see that $\delta_{a^{-1}\beta}(\id_B)=a\delta_{\beta}(\id_B)$.
Hence we obtain  $\Theta(a\cdot\xi)=a\Theta(\xi)$.
Since $\Ext^1(B,A)\cong H^1(\cO_E(-E))\cong \C$,
the above $\C^*$ action  on $\Ext^1(B,A)\setminus\{0\}$ is transitive, namely
$\C^* \to \C^*\Theta(\xi)$ is bijective onto $\Ext^1(B,A)\setminus\{0\}$, or
 $\C^*\Theta(\xi)=\Ext^1(B,A)\setminus\{0\}$.
Hence the extensions (\ref{eq:ext}) and (\ref{eq:ext2}) differ only by a
non--zero constant multiplication (as above) and $\cO_{2E}(K+3E)\cong \cO_{2E}$.
This implies that the singularity is Gorenstein, a contradiction.
In particular,  we have proved \clmref{cl:M} and that $p_g\X=2$.

Next we compute the multiplicity and the embedding demension.
Since $p_g\X=2$, we have $h^1(\cO_E)=h^1(\cO_{2E})=2$.
By \eqref{eq:chiE} and \eqref{eq:E2E}, we have $h^0(\cO_{2E})=1$ and $h^0(\cO_E(-E))=0$.
By \eqref{eq:1-2},
we have $h^1(\cO_{\tX}(-2E))=h^1(\cO_{\tX}(-E))=p_g\X-2=0$.
By \eqref{eq:chiE} and \eqref{eq:2E},
we have $H^0(\cO_{\tX}(-2E))\to H^0(\cO_E(-2E))$ is surjective and $h^0(\cO_E(-2E))=1$.
Therefore $\cO_{\tX}(-2E)$ has base point.
Let $g\in H^0(\cO_{\tX}(-2E))$ be a general element and $\di(g)=2E+H$.
Consider the exact sequence
\[
0\to \cO_{\tX}(-E)\xrightarrow{\times g} \cO_{\tX}(-3E) \to \cO_H(-3E) \to 0.
\]
Since $H^0(\cO_{\tX}(-3E)) \to H^0(\cO_H(-3E))$ is surjective, $\cO_{\tX}(-3E)$ has no base point.
Therefore there exists a function $h\in H^0(\cO_{\tX}(-3E))$ such that $(h)_E=3E$ and the image in $H^0(\cO_E(-3E))$ is nonzero at the base points of $\cO_{\tX}(-2E)$, namely, at $E\cap H$.
We resolve the base points and compute the multiplicity.
We have the following three cases.
Note that $HE=2$.

\begin{enumerate}
\item Assume that $H\cap E$ has two distinct points $p_1$ and $p_2$; clearly these are smooth points of $E$. Let $\phi\: Y\to \tX$ be the blowing up at $H\cap E$ and $F_i=\phi^{-1}(p_i)$.
If $Z$ denote the maximal ideal cycle on $Y$, then $Z=\phi^*(2E)+F_1+F_2$ and $\cO_Y(-Z)$ has no base points.
Therefore $\mult\X=-Z^2=6$.
Clearly the strict transform $F_0$ of $E$ is the cohomological cycle
and $\cO_{F_0}(-Z)\cong \cO_{F_0}$.
Therefore $Z$ is a $p_g$-cycle by \cite[3.10]{OWYgood}.
Hence $\emb\X=-Z^2+1=7$ by \cite[6.2]{OWYgood}.

\item Assume that $H$ intersects $E$ at a smooth point $p\in E$.
We have local coordinates $x ,y$ at $p$ such that $E=\{x=0\}$.
Then, at $p$, we may assume that $h=x^3$
and $g=x^2(y^2-xg_1)$ for some $g_1\in \C\{x,y\}$ with $g_1(0,0)\ne 0$;
therefore,
 $\m_{X,o}\cO_{\tX}=(x^3,x^2y^2)\cO_{\tX}=(x,y^2)\cO_{\tX}(-2E)$.
This base point can be resolved by two times of blowing ups;
the graph of $\di(g)$ is the following, where $F_0$ denote the strict transform of $E$.

\begin{picture}(100,60)(0,20)
\put(100,60){\circle*{4}}
\put(150,60){\circle*{4}}
\put(200,60){\circle*{4}}
\put(150,68){\makebox(0,0){$(6)$}}

\put(85,60){\makebox(0,0){$F_0$}}
\put(100,68){\makebox(0,0){$(2)$}}
\put(160,30){\makebox(0,0){$(1)$}}
\put(200,68){\makebox(0,0){$(3)$}}

\put(100,52){\makebox(0,0){$-3$}}
\put(160,52){\makebox(0,0){$-1$}}
\put(200,52){\makebox(0,0){$-2$}}
\put(150,60){\vector(0,-1){30}}
\put(100,60){\line(1,0){100}}
\end{picture}
\\
By the same argument in (1), we obtain that $\mult\X=6$ and $\emb\X=7$.

\item If $H$ intersects $E$ at a singular point of $E$, then $H$ is nonsingular and the strict transform of $H$ intersects transversally one of the $(-3)$-curves  on the minimal good resolution.
We may reset our situation as follows.

Let $\tX$ be the minimal good resolution with exceptional set as in \sref{s:2} and suppose that $\zma=(g)_E=E_4^*$ and $(h)_E=3\zmi$.
By \lemref{l:E4bs}, $\cO_{\tX}(-\zmi)$ has a base point, say $P$.
Since ${\rm coeff}_{E_4}(E_4^*)=5$ and ${\rm coeff}_{E_4}(3\zmi)=6$, we see that $\m_{X,o}\cO_{\tX}=\m_P\cO_{\tX}(-\zma)$ and the base point is resolved by the blowing up at $P$.
Then $\mult\X=-\zma^2+1=6$ and $\emb\X=7$ by the same argument as in (1).
\end{enumerate}

\section{The case $\zma\ne \zmi$, $2\zmi$}\label{s:nez2z}
We assume that $\tX$ is the minimal good resolution with exceptional set as in \sref{s:2} and that $\zma\ne \zmi$, $2\zmi$ on $\tX$.
If the maximal ideal cycle on the minimal resolution is $E$,
then the base point of $\cO(-E)$ is a smooth point of $E$ and thus $\zma=\zmi$.
Hence $\coeff_{E_0}(\zma)=2$ by \proref{p:zle2e}. 
On the other hand, any anti-nef cycle on  $\tX$ with $\coeff_{E_0}=2$
is one of the following three cycles:
\[
2\zmi=2E_0^*, \quad E_1^*, \quad E_4^*.
\]
Hence we have to analyse the new cases when  $\zma$ equals either
   $\quad E_1^*$ or $ E_4^*$. Since the two cases are symmetric, in the sequel we assume that $\zma=E_4^*$.

First we start with the following lemma.

\begin{lem}\label{lem:kzmi} For  any
$\ell\geq 1$ and for analytic structure supported by $\Gamma$

(a) \  the line bundle  $\cO_{\tX}(-(\ell+2)\zmi)$ has no fixed component.

(b) \ $h^1(\cO_{\tX}(-(\ell+2)\zmi))=0$.
\end{lem}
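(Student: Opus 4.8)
\textbf{Proof proposal for Lemma \ref{lem:kzmi}.}

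The plan is to prove (a) and (b) simultaneously by induction on $\ell$, using the standard exact-sequence technology that pervades the paper. First I would establish the base case $\ell = 1$, i.e.\ the statement for $3\zmi = Z_K$. For part (b), the vanishing $h^1(\cO_{\tX}(-3\zmi)) = 0$ is exactly Grauert--Riemenschneider (applied on the minimal resolution $-3\zmi$ corresponds to $-3E = K_{\tX} - E$, wait — more carefully, one uses that $3\zmi \geq Z_K$ and $Z_K - 3\zmi$ is anti-effective, so $\cO_{\tX}(-3\zmi)$ has a Kodaira-type vanishing; this is already invoked in the paper, e.g.\ just after \eqref{eq:chiE}). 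For part (a) at $\ell = 1$: if $\cO_{\tX}(-3\zmi)$ had a fixed component, then by the argument already used in the proof of \lemref{l:e4} (replacing a cycle $D$ by the minimal anti-nef cycle dominating it does not change $H^0$), every exceptional component would be fixed; so it suffices to produce one section of $\cO_{\tX}(-3\zmi)$ not vanishing on, say, $E_0$. This follows from comparing $h^1$'s along the exact sequence $0 \to \cO_{\tX}(-3\zmi - E_0) \to \cO_{\tX}(-3\zmi) \to \cO_{E_0} \to 0$ together with the vanishing from (b) and $h^1(\cO_{E_0}) = 2$; one needs $h^1(\cO_{\tX}(-3\zmi - E_0))$ to be controlled, which follows from a short computation sequence from $3\zmi + E_0$ up to a cycle of the form $Z_K + (\text{anti-effective piece})$ where Kodaira vanishing applies, exactly as in \lemref{l:e3*}.

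For the inductive step, assume (a) and (b) hold for $\ell$; I would prove them for $\ell+1$ using the exact sequence
\[
0 \to \cO_{\tX}(-(\ell+3)\zmi) \to \cO_{\tX}(-(\ell+2)\zmi) \to \cO_{\zmi}(-(\ell+2)\zmi) \to 0.
\]
Since $\zmi = E_0^*$ and $\zmi E_0 = -1$ while $\zmi E_i = 0$ for $i \neq 0$, one has $\zmi \cdot \zmi = -1$, so $\deg$ of the restricted bundle on each component of $\zmi$ is nonnegative (in fact $\cO_{\zmi}(-(\ell+2)\zmi)$ has $\chi = 0$ and is a nonspecial-type bundle on the Artin cycle $\zmi$, which is a rational cycle with $h^1(\cO_{\zmi}) = 0$). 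Hence $h^1(\cO_{\zmi}(-(\ell+2)\zmi)) = 0$: concretely, run a Laufer computation sequence for $\zmi$ and check that each term $h^1(\cO_{E_{i(k)}}(-(\ell+2)\zmi - l_k)) = 0$, since $(\ell+2)\zmi$ is anti-nef with strictly negative $E_0$-value and $l_k$ contributes the usual Laufer correction — the point is that $(-(\ell+2)\zmi - l_k)\cdot E_{i(k)} \geq -1$ at every step once $\ell \geq 1$ so no positive contribution to $h^1$ survives. Combined with the inductive hypothesis $h^1(\cO_{\tX}(-(\ell+2)\zmi)) = 0$, the long exact sequence gives $h^1(\cO_{\tX}(-(\ell+3)\zmi)) = 0$, proving (b) for $\ell+1$. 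For (a) at $\ell+1$, I again argue that no component can be fixed: the surjectivity $H^0(\cO_{\tX}(-(\ell+2)\zmi)) \twoheadrightarrow H^0(\cO_{\zmi}(-(\ell+2)\zmi))$ (from $h^1(\cO_{\tX}(-(\ell+3)\zmi)) = 0$) shows that sections of $\cO_{\tX}(-(\ell+2)\zmi)$ restrict onto all of $H^0(\cO_{\zmi}(-(\ell+2)\zmi))$, and since $(\ell+2)\zmi$ already has no fixed component by the inductive hypothesis, adding $\zmi$ cannot create one — more precisely, a fixed component of $\cO_{\tX}(-(\ell+2)\zmi)$ would force (by the anti-nef minimality argument) a fixed component of $\cO_{\tX}(-(\ell+1)\zmi)$ unless $\ell+1 \geq 3$, and an induction downward contradicts the $\ell=1$ base case; alternatively one observes directly that $(\ell+2)\zmi$ for $\ell \geq 1$ dominates $Z_K = 3\zmi$, so $\cO_{\tX}(-(\ell+2)\zmi) = \cO_{\tX}(-Z_K) \otimes \cO_{\tX}(-(\ell-1)\zmi)$ and base-point/fixed-component freeness of the "big multiple" follows from the $\ell=1$ case plus the fact that tensoring by the further anti-nef multiple only shrinks the base locus.

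The step I expect to be the main obstacle is the fixed-component statement (a) rather than the cohomology vanishing (b): (b) is essentially a one-line Grauert--Riemenschneider / Kodaira-vanishing argument once $(\ell+2)\zmi \geq Z_K$, whereas (a) requires genuinely checking that the restriction maps to the exceptional curves are nontrivial, which means one must control $h^1$ of the auxiliary bundles $\cO_{\tX}(-(\ell+2)\zmi - E_i)$ for each $i$. The cleanest route is probably to avoid case analysis by noting $(\ell+2)\zmi - Z_K = (\ell-1)\zmi \geq 0$ for $\ell \geq 1$, writing $\cO_{\tX}(-(\ell+2)\zmi) = \cO_{\tX}(K_{\tX}) \otimes \cO_{\tX}(-((\ell-1)\zmi + \zmi)) = \cO_{\tX}(K_{\tX} - \ell\zmi)$, wait that is not quite an identity of line bundles in the non-Gorenstein case since $-Z_K \not\sim K_{\tX}$ — so I would instead stick with the direct computation-sequence argument modeled verbatim on Lemmas \ref{l:e4} and \ref{l:e3*}, which is the safe and self-contained choice and requires only elementary bookkeeping with the intersection numbers read off Figure \ref{fig:gamma}.
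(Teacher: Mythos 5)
Your proposal has genuine gaps in both parts, and it omits the one construction the paper's proof actually turns on. For (b), the base case is not ``exactly Grauert--Riemenschneider'': on the minimal good resolution $(\ell+2)\zmi-Z_K$ is effective but \emph{not} anti-nef (e.g.\ $(3\zmi-Z_K)E_0=-3+11=8>0$), so the Kodaira-type vanishing $H^1(\cO_{\tX}(-Z_K-D))=0$, valid for $D\ge 0$ anti-nef, does not apply to $-(\ell+2)\zmi$; and the condition you invoke, $D\ge Z_K$, is not sufficient for vanishing (consider $D=Z_K+E_i$). The paper bridges the gap with a computation sequence from $Z_K+(\ell-1)\zmi$ to $(\ell+2)\zmi$ with $Z_kE_{i(k)}=1$ at every step. (Pushing down to the minimal resolution, where $(\ell+2)\zmi$ pulls back from $(\ell+2)E=Z_K+(\ell-1)E$ with $(\ell-1)E$ anti-nef, would be a correct shortcut, but you abandon that line.) Your inductive step for (b) is a non sequitur: once $h^1(\cO_{\tX}(-(\ell+2)\zmi))=h^1(\cO_{\zmi}(-(\ell+2)\zmi))=0$, the long exact sequence gives $h^1(\cO_{\tX}(-(\ell+3)\zmi))=\dim\Coker\bigl(H^0(\cO_{\tX}(-(\ell+2)\zmi))\to H^0(\cO_{\zmi}(-(\ell+2)\zmi))\bigr)$, and that surjectivity is exactly what has to be proved, not a consequence of the induction hypothesis. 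Moreover the facts you use to kill $h^1(\cO_{\zmi}(-(\ell+2)\zmi))$ are false: $\zmi$ is not a rational cycle here ($h^1(\cO_{\zmi})=2$, as computed in Section \ref{s:pre}), and $\chi(\cO_{\zmi}(-(\ell+2)\zmi))=\chi(\zmi)+(\ell+2)(-\zmi^2)=\ell+1$, not $0$.

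For (a) you skip the actual difficulty. The claim that a single fixed component forces every component to be fixed is false for $Z=(\ell+2)\zmi$: the minimal anti-nef cycle $\ge(\ell+2)\zmi+E_6$ is $(\ell+2)\zmi+E_1+E_2+2E_6$, which does not raise the coefficients of $E_0,E_3,E_4,E_5$, so producing one section with the correct order along $E_0$ says nothing about $E_5$ or $E_6$. This is not a technicality: in Section \ref{s:nez2z} the maximal ideal cycle can be $E_4^*=2\zmi+E_3+E_4+2E_5$, i.e.\ exactly this kind of arm-supported excess occurs, and for such structures $\cO_{\tX}(-\zmi)$ and $\cO_{\tX}(-2\zmi)$ do have fixed components --- which also invalidates your ``tensor with smaller multiples'' shortcut and your downward induction; note further that $Z_K\ne 3\zmi$ on the minimal good resolution. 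The paper's proof of (a) is a different construction that you would need to supply: a computation sequence from $E_4^*+\ell\zmi+E_6$ to $Z_K+\ell\zmi$ gives $h^1(\cO_{\tX}(-E_4^*-\ell\zmi-E_6))=0$, hence a function $f$ with $\coeff_{E_6}((f)_E)=6(\ell+2)$, $\coeff_{E_0}((f)_E)=\ell+2$ and $\coeff_{E_5}((f)_E)\ge 6(\ell+2)+2$; by the symmetry of $\Gamma$ there is an $f'$ with the roles of $E_5$ and $E_6$ exchanged; and then $(f+f')_E=(\ell+2)\zmi$ because every nonzero anti-nef excess over $(\ell+2)\zmi$ with $\coeff_{E_0}=\ell+2$ has positive coefficient at $E_5$ or at $E_6$. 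Without this two-function argument (or an equivalent), part (a) remains unproved.
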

\begin{proof}
(a)
There exists a computation sequence starting from $E^*_4+\ell\zmi+E_6$ and ending
with  $Z_K+\ell\zmi$ by adding (in this order) $E_1,\ E_2, \ E_6, E_0$, such that
at the first three steps $Z_kE_{i(k)}=1$ and at the last step $Z_kE_{i(k)}\leq 1$.
Hence $h^1(\cO_{\tX}(-E^*_4-\ell\zmi-E_6))\leq h^1(\cO_{\tX}(-Z_K-\ell\zmi))=0$.
In particular, from the exact sequence
$0\to \cO_{\tX}(-E^*_4-\ell\zmi-E_6)\to \cO_{\tX}(-E^*_4-\ell\zmi)\to \cO_{E_6}\to 0$,
$$\frac{H^0(\cO_{\tX}(-E^*_4-\ell\zmi))}{H^0(\cO_{\tX}(-E^*_4-\ell\zmi-E_6))}\cong \C.$$
Hence, there exists a function $f$ with ${\rm coeff}_{E_6}(f)=12+6\ell$,
${\rm coeff}_{E_0}(f)= 2+\ell$
and ${\rm coeff}_{E_5}(f)\geq 14+6\ell$. Symmetrically, there exists another function $f'$ with  ${\rm coeff}_{E_6}(f')\geq 14+6\ell$, ${\rm coeff}_{E_0}(f')= 2+\ell$
and ${\rm coeff}_{E_5}(f')= 12+6\ell$. Hence the divisor of $f+f'$ is $(\ell+2)\zmi$.

(b) There is a Laufer computation sequence starting from $Z_K+(\ell-1)\zmi$ and ending  with $(\ell+2)\zmi$ such that at every step $Z_kE_{i(k)}=1$.
Hence  $h^1(\cO_{\tX}(-(\ell+2)\zmi))=h^1(\cO_{\tX}(-Z_K-(\ell-1)\zmi))=0$.
\end{proof}
\begin{lem}\label{l:E4bs}
If $\zma=E_4^*$ then $p_g\X=2$ (hence $(X,o)$ is not Gorenstein), and
 $\cO_{\tX}(-E_4^*)$ has a base point.
\end{lem}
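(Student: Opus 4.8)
The plan is to compute $p_g$ first and then derive the base point assertion. For the value of $p_g$, I would exploit the hypothesis $\zma=E_4^*$ directly: the minimal anti-nef cycle dominating $\zmi$ among those with $\coeff_{E_0}=2$ is $E_4^*$, and we have the strict inequality $E_4^*\not\geq 2\zmi$ (indeed $\coeff_{E_5}(E_4^*)=5 < 14$), so $\cO_{\tX}(-\zma)=\cO_{\tX}(-E_4^*)$ has sections whose divisor is exactly $E_4^*$ but the global sections over $\zmi$ do not lift to give $2\zmi$. I would then run the cohomological long exact sequence attached to $0\to \cO_{\tX}(-2\zmi)\to \cO_{\tX}(-\zmi)\to \cO_{\zmi}(-\zmi)\to 0$ together with \eqref{eq:2-1} (which gives $h^1(\cO_{\tX}(-2\zmi))=h^1(\cO_{\tX}(-\zmi))$ whenever $\zma\geq 2\zmi$ — but here $\zma\not\geq 2\zmi$, so the map $H^0(\cO_{\tX}(-\zmi))\to H^0(\cO_{\zmi}(-\zmi))$ fails to be surjective, forcing $h^1(\cO_{\tX}(-\zmi))>h^1(\cO_{\tX}(-2\zmi))$). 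Combined with \eqref{eq:pg-2}, which on the minimal good resolution reads $h^1(\cO_{\tX}(-\zmi))=p_g-2$, and with the general upper bound $p_g\le 3$ from \thmref{t:pg3} together with $p_g\ge 2$, a short case analysis pins down $p_g\X=2$; since a Gorenstein structure would force $p_g=3$ by \thmref{t:pg3}, $(X,o)$ is not Gorenstein.

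For the base point statement, I would argue by contradiction: suppose $\cO_{\tX}(-E_4^*)$ is base-point free. Then $-\,(E_4^*)^2 = 1$ would be the self-intersection of a base-point-free class of ``degree one'' along $E_4$, and the associated function $f$ with $\di(f)=E_4^*+H$ would cut $E_4$ transversally at a single smooth point; pulling this together with the multiplicity computation this would realize $\mult\X$ from $-\,(E_4^*)^2+1=2$, making $\X$ a double point and hence a hypersurface, contradicting non-Gorenstein-ness (as established in the preceding paragraph). Alternatively, and perhaps more cleanly, I would use \lemref{lem:kzmi}(a): it produces, for every $\ell\ge 1$, a function with $\coeff_{E_0}=\ell+2$ obtained by adding two ``asymmetric'' sections along the two legs; base-point-freeness of $\cO_{\tX}(-E_4^*)$ would then let me bootstrap sections of $\cO_{\tX}(-\zma-E_6)$ in a way that is incompatible with the computed value $p_g=2$ via the long exact sequences in $E_6$.

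I expect the main obstacle to be the second half: showing that $\cO_{\tX}(-E_4^*)$ genuinely has a base point rather than merely a fixed component. The ``no fixed component'' type statements (as in Lemmas \ref{l:e4} and \ref{l:e3*}) are handled by the standard $h^1$-bounding computation-sequence technique, but ruling out base-point-freeness requires a more delicate argument, presumably of the form: if there were no base point then $\cO_{\tX}(-E_4^*)$ would separate enough to force the multiplicity down to $\mathrm{mult}\le \,-\,(E_4^*)^2+1$, and one then checks this contradicts either the non-Gorenstein property or the embedding dimension bookkeeping that appears in \thmref{t:gen}. The bookkeeping with the coefficients of $E_4^*$ versus $3\zmi$ along $E_4$ (namely $5$ versus $6$) is exactly what makes the blow-up-at-$P$ argument in \thmref{t:gen}(3) work, and I would set things up so that Lemma \ref{l:E4bs} feeds directly into that computation.
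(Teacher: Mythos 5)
There is a genuine gap, and it starts with an arithmetic error that invalidates the whole first half of your plan. You assert $E_4^*\not\geq 2\zmi$, citing $\coeff_{E_5}(E_4^*)=5<14$; but $5$ is the coefficient of the end curve $E_4$, not of the $(-1)$--curve $E_5$. In fact $E_4^*=2\zmi+E_3+E_4+2E_5\geq 2\zmi$ (this is exactly the cycle $C$ the paper introduces), so \eqref{eq:2-1} \emph{does} apply and gives $h^1(\cO_{\tX}(-2\zmi))=h^1(\cO_{\tX}(-\zmi))$; there is no failure of surjectivity of $H^0(\cO_{\tX}(-\zmi))\to H^0(\cO_{\zmi}(-\zmi))$ to exploit. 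Worse, even if your inequality $h^1(\cO_{\tX}(-\zmi))>h^1(\cO_{\tX}(-2\zmi))\ge 0$ were available, it would force $h^1(\cO_{\tX}(-\zmi))\ge 1$, i.e.\ $p_g\ge 3$ --- the \emph{opposite} of what you want to prove. The actual content of the paper's proof is a computation of $h^1(\cO_{\tX}(-E_4^*))$: the hypothesis $\zma=E_4^*$ gives $H^0(\cO_{\tX}(-E_4^*))=H^0(\cO_{\tX}(-2\zmi))$, whence $h^1(\cO_{\tX}(-E_4^*))=1+h^1(\cO_{\tX}(-2\zmi))$; on the other hand the minimally elliptic cycle $D=Z_K-E_4^*=E_0+E_1+E_2+2E_6$ satisfies $\cO_D(-E_4^*)\cong\cO_D$ (degree zero plus a nonvanishing section coming from the absence of a fixed component), so $h^1(\cO_D(-E_4^*))=1$ and the sequence $0\to\cO_{\tX}(-Z_K)\to\cO_{\tX}(-E_4^*)\to\cO_D(-E_4^*)\to 0$ pins down $h^1(\cO_{\tX}(-E_4^*))=1$, hence $h^1(\cO_{\tX}(-2\zmi))=0$ and $p_g=2$. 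Nothing in your plan produces this vanishing.

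The base-point argument is also broken. You write $-(E_4^*)^2=1$; in fact $(E_4^*)^2=\coeff_{E_4}(E_4^*)\cdot(E_4^*E_4)=-5$, and base-point-freeness of $\cO_{\tX}(-\zma)$ would give $\mult\X=-\zma^2=5$, not $2$, so no contradiction with $\X$ being a double point arises. (Your fallback suggestion is too vague to evaluate as a proof.) The paper's route is different and quantitative: a computation sequence from $E_4^*+E_4$ to $3\zmi$ with exactly two steps where $Z_kE_{i(k)}=2$, combined with $h^1(\cO_{\tX}(-3\zmi))=0$ from \lemref{lem:kzmi}(b), yields $h^1(\cO_{\tX}(-E_4^*-E_4))=2$ (an equality, since $h^0(\cO_{E_{i(k)}}(-Z_k))=0$ at each step); then the sequence $0\to\cO_{\tX}(-E_4^*-E_4)\to\cO_{\tX}(-E_4^*)\to\cO_{E_4}(-E_4^*)\to 0$ shows the image of $H^0(\cO_{\tX}(-E_4^*))$ in $H^0(\cO_{E_4}(-E_4^*))\cong\C^2$ is only one-dimensional, which is precisely the existence of a base point on $E_4$. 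You correctly anticipate that the $5$ versus $6$ coefficient comparison along $E_4$ feeds into the multiplicity computation of \thmref{t:gen}(3), but that is downstream of the lemma, not a proof of it.
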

\begin{proof}
 Let $C=E_4^*-2\zmi=E_3+E_4+2E_5$.
In the exact sequence
\[
0\to \cO_{\tX}(-E_4^*)\to \cO_{\tX}(-2\zmi)\to \cO_{C}\to 0,
\]
the assumption implies $H^0(\cO_{\tX}(-E_4^*))=H^0(\cO_{\tX}(-2\zmi))$,
hence
\begin{equation}\label{eq:66}
h^1(\cO_{\tX}(-E_4^*))=1+h^1(\cO_{\tX}(-2\zmi)).
\end{equation}
Let $D=Z_K-E_4^*=E_0+E_1+E_2+2E_6$.
Then
we have $h^1(\cO_{D}(-E_4^*))=1$
as in the proof of \lemref{l:e3*}.
From the exact sequence
\[
0\to \cO_{\tX}(-Z_K) \to \cO_{\tX}(-E_4^*)\to \cO_{D}(-E_4^*)\to 0,
\]
we obtain  $h^1(\cO_{\tX}(-E_4^*))=1$. By \eqref{eq:66}, we have $h^1(\cO_{\tX}(-2\zmi))=0$.
It follows from \eqref{eq:2-1} and \eqref{eq:pg-2} that $p_g\X=2$.

Furthermore,  $\X$ is not Gorenstein by \thmref{t:pg3}.

There exists a computation sequence $\{Z_k\}$
starting from $E^*_4+E_4$ and ending
with  $3\zmi$ such that $Z_kE_{i(k)}=2$ at two steps and otherwise $=1$.
Since $h^1(\cO_{\tX}(-3\zmi))=0$ (cf. Lemma \ref{lem:kzmi}(b)), we obtain
$
h^1(\cO_{\tX}(-E^*_4-E_4))=2
$.
In particular, from the exact sequence
\[
0\to \cO_{\tX}(-E^*_4-E_4)\to \cO_{\tX}(-E^*_4)\to \cO_{E_4}(-E^*_4)\to 0,
\]
 the image of the map $H^0(\cO_{\tX}(-E^*_4))\to H^0(\cO_{E_4}(-E^*_4))$ is 1--dimensional.
Hence $\cO_{\tX}(-E^*_4)$ has a base point.
\end{proof}

Let $f$ be the generic element of $\m_{X,o}$.
 Its divisor on $\tX$
has the form $\zma+H$, where $H$ is a cut of $E_4$ cutting it transversally in a
unique point $P$. Then in local coordinates around $P$ (with
$\{x=0\}=E$) $f$ has the form $x^5y$. By Lemma \ref{lem:kzmi}(a) there exists
a function $g$ with $(g)_E=3\zmi$, hence at $P$ with local equation $x^6$.
Therefore, $\m_{X,o}\cO_{\tX}=\m_P\cO_{\tX}(-\zma)$ and
$\mult\X=-\zma^2+1=6$.

Next,
 $\emb\X=7$ by the same argument as in (1) of the previous section.

\begin{rem}
Assume that $(X,o)$ is a singularity supported by $\Gamma$ with $p_g=2$.
Then  $h^1(\cO_{\tX}(-2\zmi))=h^1(\cO_{\tX}(-3\zmi))=0$.
Hence, from the exact sequence $0\to \cO_{\tX}(-3\zmi)\to \cO_{\tX}(-2\zmi)\to
\cO_{\zmi}(-2\zmi)\to 0$ we obtain that
$$\frac{H^0(\cO_{\tX}(-2\zmi))}{H^0(\cO_{\tX}(-3\zmi))}\cong \C.$$
Since the divisors of the analytic functions are the anti-nef cycles, and the
only  anti-nef cycles $C$ with $C\geq 2\zmi$ and $C\not\geq 3\zmi$
are $2\zmi, \ E_1^*, \ E_4^*$, out of these three cycles {\it exactly one}   appears as the divisor of an analytic function chosen by the analytic type. That divisor equals
$\zma$.
\end{rem}


\providecommand{\bysame}{\leavevmode\hbox to3em{\hrulefill}\thinspace}
\providecommand{\MR}{\relax\ifhmode\unskip\space\fi MR }
\providecommand{\MRhref}[2]{%
  \href{http://www.ams.org/mathscinet-getitem?mr=#1}{#2}
}
\providecommand{\href}[2]{#2}

\end{document}